\documentclass[a4paper,11pt,reqno]{amsart}
\usepackage[left=3cm,right=3cm,top=3cm,bottom=3cm]{geometry}
\usepackage[utf8]{inputenc}
\usepackage[english]{babel}
\usepackage{amsmath}
\usepackage{float}
\usepackage{amssymb}
\usepackage{amsfonts}
\usepackage{mathrsfs}
\usepackage{mathtools}
\usepackage{latexsym}
\usepackage{url}
\usepackage{amsthm}
\usepackage{hyperref}
\usepackage{enumitem}
\usepackage[most]{tcolorbox}
\usepackage{cleveref}

\usepackage{svg}
\usepackage{tikz-cd} 
\newcommand{\bq}{\begin{equation}}
\newcommand{\eq}{\end{equation}}
\newcommand{\bqs}{\begin{equation}\begin{split}}
\newcommand{\eqs}{\end{split}\end{equation}}
\newcommand{\bqn}{\begin{equation*}}
\newcommand{\eqn}{\end{equation*}}
\newcommand{\eps}{\varepsilon}
\newcommand{\M}{\mathcal{M}}
\newcommand{\G}{\mathcal{G}}

%
%
%
\theoremstyle{plain}
\newtheorem{theorem}{Theorem}[section]

\newtheorem{defn}[theorem]{Definition}
\newtheorem{cor}[theorem]{Corollary}
\newtheorem{prop}[theorem]{Proposition}
\newtheorem{lem}[theorem]{Lemma}

\theoremstyle{remark}
\newtheorem{remark}[theorem]{Remark}




\NewEnviron{solidbox}{
    \begin{center}
    \par
    \begin{tikzpicture}
    \node[rectangle,minimum width=0.85\textwidth] (m) {
        \begin{minipage}{0.8\textwidth}\BODY\end{minipage}
    };
    \draw (m.south west) rectangle (m.north east);
    \end{tikzpicture}
    \end{center}
}

\NewEnviron{dashedbox}{
    \begin{center}
    \par
    \begin{tikzpicture}
    \node[rectangle,minimum width=0.85\textwidth] (m) {
        \begin{minipage}{0.8\textwidth}\BODY\end{minipage}
    };
    \draw[dashed] (m.south west) rectangle (m.north east);
    \end{tikzpicture}
    \end{center}
}


\DeclarePairedDelimiterX\braket[2]{\langle}{\rangle}{#1 , #2}

\newcommand{\nnorm}[1]{
    {\left\vert\kern-0.25ex\left\vert\kern-0.25ex\left\vert #1
    \right\vert\kern-0.25ex\right\vert\kern-0.25ex\right\vert}
}

\newcommand{\defgl}{\mathrel{=\!\!\mathop:}}
\newcommand{\defgr}{\mathrel{\mathop:\!\!=}}
\newcommand{\C}{\mathbb{C}}
\newcommand{\R}{\mathbb{R}}

\newcommand{\N}{\mathbb{N}}




\newcommand{\Ob}[1]{\mathrm{Ob}(\mathcal{C})}




\begin{document}
\title[Obstacle scattering]{Resonances and weighted zeta functions\\ for obstacle scattering via smooth models} 
\author{Benjamin Delarue}
\email{bdelarue@math.uni-paderborn.de}
\author{Philipp Schütte}
\email{pschuet2@mail.uni-paderborn.de}
\author{Tobias Weich}
\email{weich@math.uni-paderborn.de}
\address{Institut f\"ur Mathematik, Universit\"at Paderborn, Paderborn, Germany}

\begin{abstract}
We consider a geodesic billiard system consisting of a complete Riemannian manifold and an obstacle submanifold with boundary at which the trajectories of the geodesic flow experience specular reflections. We show that if the geodesic billiard system is hyperbolic on its trapped set and the latter is compact and non-grazing the techniques for open hyperbolic systems developed by Dyatlov and Guillarmou  \cite{Dyatlov.2016a} can be applied to a smooth model for the discontinuous flow defined by the non-grazing billiard trajectories. This allows us to obtain a meromorphic resolvent for the generator of the billiard flow. As an application we prove a meromorphic continuation of weighted zeta functions together with explicit residue formulae. In particular, our results apply to scattering by convex obstacles in the Euclidean plane. 
\end{abstract}

\maketitle



\section{Introduction} \label{intro}

Open hyperbolic flows combine two interesting dynamical phenomena: chaotic behavior on the one hand and escape towards infinity on the other hand. In the mathematical physics literature there are two paradigmatic example classes of such flows: geodesic flows on Schottky surfaces and convex obstacle scattering. 

The geodesic flows on (convex co-compact) Schottky surfaces are mathematically much easier to handle (see e.g.\ \cite{Dal10} for an introduction). They are complete smooth flows on Riemannian locally symmetric spaces whose algebraic structure allows for an application of powerful techniques from harmonic analysis and structure theory. One has e.g.\ meromorphic continuations of zeta functions \cite{Fri86, Gui92}, precise estimates on the counting of periodic trajectories \cite{Gui86,Lal89}, or exact correspondences with  the quantum counterpart of the geodesic flow \cite{GHW18} given by the Laplace-Beltrami operator. 

The example of obstacle scattering, in contrast, has the advantage that it is much less abstract and can be seen as a concrete model of a physical particle in a two-dimensional plane performing specular reflections at a finite number of hard obstacles. It has therefore been intensively studied in the physics literature in the context of classical \cite{GR89cl}, semiclassical \cite{GR89sc}, or quantum-mechanical \cite{GR89qm} dynamical systems and allows for numerical \cite{Cvi89, wirzba1999quantum, lu2003, barkhofen2014resonance, weich2014formation} as well as  physical experiments \cite{pance2000quantum,barkhofen2013experimental,potzuweit2012weyl}. Obstacle scattering has however been also in the focus of mathematical literature, see e.g. \cite{Ikawa.1988,NSZ09}.

In this article we focus on the theory of Ruelle-Pollicott resonances. These resonances were introduced by Ruelle \cite{Rue76} and Pollicott \cite{Pol81} to describe the convergence to equilibrium of hyperbolic flows. In most modern formulations the resonances occur as a discrete spectrum in an anisotropic function space and as poles of a meromorphic resolvent. The existence of such a discrete resonance spectrum has been established in many different settings such as Anosov flows on compact manifolds \cite{BL07, FS11,Dyatlov.2016}, Morse-Smale flows \cite{zbMATH07201731, zbMATH07208782},  geodesic flows on manifolds with cusps \cite{bonthonneau2017ruelle}, basic sets of Axiom-A flows \cite{Dyatlov.2016a}, general Axiom-A flows \cite{meddane2021morse}, higher rank Anosov actions \cite{bonthonneau2020ruelle}, or finite horizon Sinai billiards \cite{zbMATH06830045}. The respective meromorphic resolvents are not only useful to define the Ruelle resonances but also have many additional applications such as meromorphic continuation of zeta functions \cite{zbMATH06203676, Dyatlov.2016} or Poincar\'e series \cite{DR20} and to geometric inverse problems \cite{zbMATH07097501, bonthonneau2019local}. The geodesic flow on Schottky surfaces provides concrete examples of basic sets of an Axiom-A flow and it neatly fits into the setting of open hyperbolic systems treated in \cite{Dyatlov.2016a}. In contrast, for obstacle scattering this is not directly the case: As is typical for billiard flows, there are technical difficulties such as the only piecewise smoothness of the flow and singularities caused by grazing trajectories. The aim of this paper is to establish a meromorphic resolvent for the dynamics of obstacle scattering and provide a rigorous framework to define the Ruelle-Pollicott resonances.

Let us first state a simplified version of our main result: Let $\Omega_i\subset \R^n$ , $i=1,\ldots, N$ and $n\geq 2$, be a finite number of disjoint, compact, connected, strictly convex obstacle sets with smooth boundaries and non-empty interiors. Then the \emph{phase space} of the billiard dynamics is given by the unit tangent bundle $M = S (\R^n\setminus \mathring{\Omega})$, $\Omega\defgr \bigcup_{i = 1}^N \Omega_i$, and the billiard dynamics on the  manifold with boundary $M$ is described by a  flow $\varphi^b: \mathbb{R}\times M\rightarrow M$ whose trajectories coincide with straight motion at unit speed unless they intersect the boundary $\partial M=(S\R^n)|_{\partial \Omega}$, where they undergo an \emph{instantaneous fiber-wise reflection} at the tangent lines $T_x\partial \Omega$, $x\in \partial \Omega$. If a trajectory intersects the unit tangent bundle $S\partial \Omega\subset \partial M$ of the obstacle boundary $\partial \Omega$,  we call it a \emph{grazing} trajectory. 
We demand that the billiard flow $\varphi^b$ satisfy the \\

\begin{minipage}{0.9\textwidth}\textbf{\emph{no-grazing condition}}: Every grazing trajectory undergoes only finitely many boundary reflections either in forward or backward time.\\
\end{minipage} 

A classical geometric condition which is sufficient but in general not necessary for the no-grazing condition to hold is the so-called \emph{no-eclipse condition} \cite{Morita1991}: For any two obstacles $\Omega_i, \Omega_j$ the convex hull of their union does not intersect any distinct third obstacle $\Omega_k$. 
\begin{figure}[h]
\centering
\includegraphics[width=0.49\textwidth, trim={0cm 0cm 0cm 0cm}, clip]{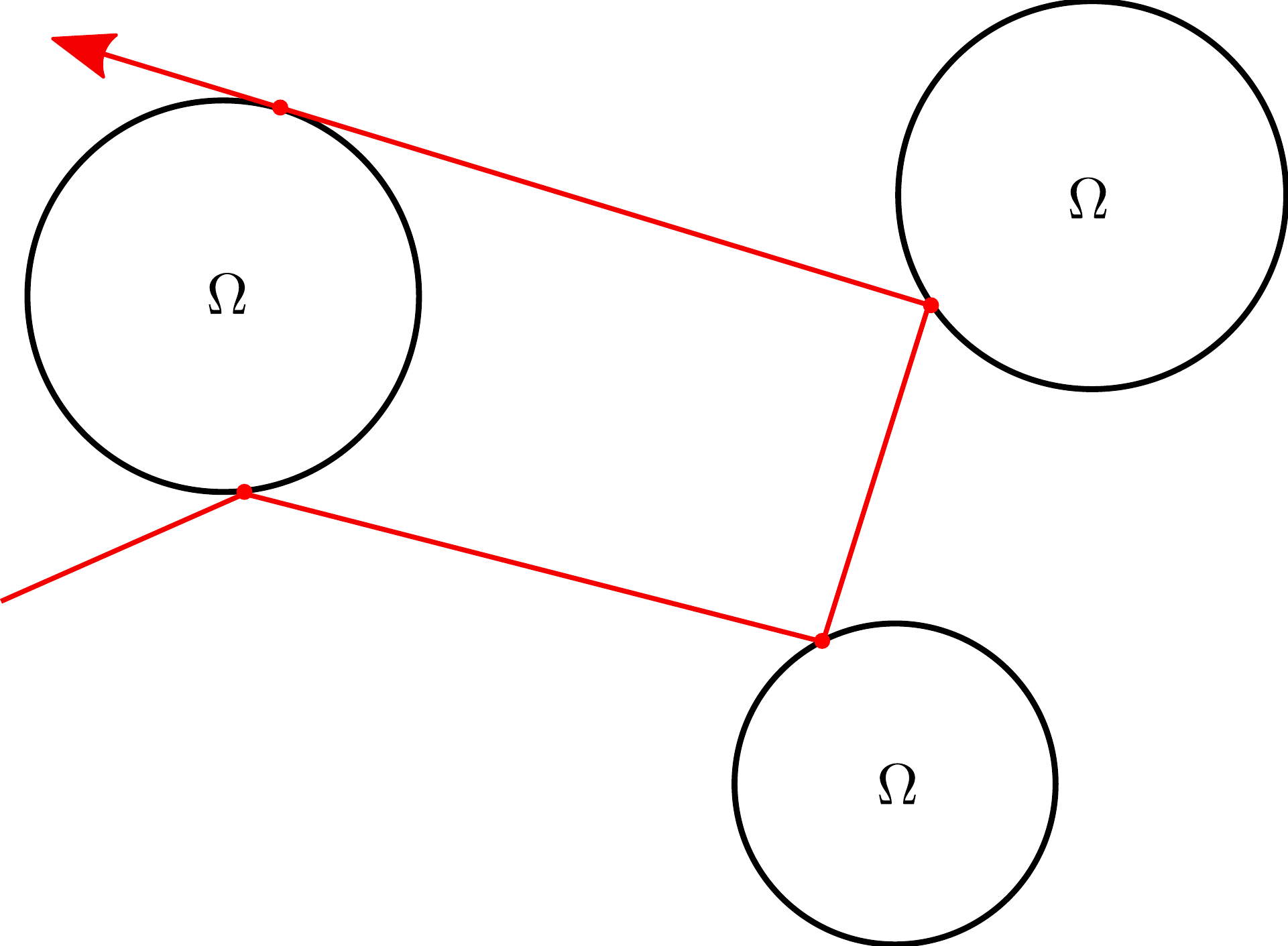}
\caption{Spatial projection of a grazing trajectory in a Euclidean billiard in $\R^2$ with obstacles given by three discs. Note that the above configuration of obstacles satisfies the no-eclipse condition -- in particular, it satisfies the less restrictive no-grazing condition.}
\label{fig05}
\end{figure}

Under the assumption of the no-grazing condition one deduces that the trapped set 
\begin{equation}\label{eq:Kb}
K^b \defgr \{(x, v)\in M \,|\, \varphi^b_t(x, v) \text{ stays in a bounded set for all }t\in \R\}
\end{equation}
contains no grazing trajectories.

In Section~\ref{sec:CBillP} we  introduce for a class of open sets $U\subset M$ spaces of \emph{smooth billiard functions} $\mathrm{C}^\infty_\mathrm{Bill}(U)$ and \emph{billiard test functions} $\mathrm{C}^\infty_\mathrm{Bill, c}(U) = \mathrm{C}^\infty_\mathrm{Bill}(U)\cap \mathrm{C}_\mathrm{c}^\infty(U)$ whose main property to be noted here is that $\mathrm{C}_\mathrm{c}^\infty(U\setminus \partial M) \subset \mathrm{C}^\infty_{\mathrm{Bill, c}}(U)\subset \mathrm{C}_\mathrm{c}^\infty(U)$. We then prove the following theorem:
\begin{theorem}\label{thm:intro}
There is an open neighborhood  $U$ of $K^b$ in $M$ such that with the backward escape time  
$\tau^-_U(x, v) \defgr \sup\{t < 0 \,|\, \varphi^b_t(x, v)\not\in U\}$, $(x, v)\in U$, and for $\lambda\in \C, \mathrm{Re}(\lambda)\gg 0$, 
\[
\mathbf{R}_U(\lambda)f(x, v) \defgr \int_0^{-\tau^-_U(x, v)} \mathrm{e}^{-\lambda t} f\big( \varphi^b_{-t}(x, v) \big) \mathrm{d}t, \qquad f\in\mathrm{C}^\infty_\mathrm{Bill, c}({U}),\, (x, v)\in U,
\]
yields a well-defined family of linear maps $\mathbf{R}_U(\lambda): \mathrm{C}^\infty_\mathrm{Bill, c}(U) \rightarrow \mathrm{C}(U)$ and its matrix coefficients
\[
\langle \mathbf{R}_U(\lambda) f, g\rangle_{\mathrm{L}^2(M)},\qquad f, g\in \mathrm{C}^\infty_\mathrm{Bill, c}({U})
\]
extend from holomorphic functions on $\mathrm{Re}(\lambda) \gg 0$ to meromorphic functions on $\mathbb{C}$ with poles contained in a discrete set of complex numbers that is independent of $f$ and $g$ and the choice of $U$. 
\end{theorem}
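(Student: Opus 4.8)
The plan is to reduce the billiard problem to the setting of Dyatlov--Guillarmou \cite{Dyatlov.2016a} by constructing a \emph{smooth model} for the discontinuous billiard flow near the trapped set, and then to transport their meromorphic resolvent back to $\mathbf{R}_U(\lambda)$. Concretely: since the no-grazing condition guarantees that $K^b$ contains no grazing trajectories and is compact, there is an open neighborhood $U$ of $K^b$ on which only finitely many reflections can occur along any trajectory segment spent in $U$, and the reflections happen transversally to $\partial M$. The first step is therefore to use the billiard reflection map to ``unfold'' the flow: one replaces a neighborhood of $\partial M$ by glueing the incoming and outgoing pieces of phase space along the reflection involution, producing a genuine smooth manifold $\widehat{M}$ (without boundary) carrying a smooth complete-in-$U$ vector field $\widehat{X}$ whose flow $\widehat\varphi_t$ corresponds, under a natural map $\pi\colon \widehat M \to M$, to $\varphi^b_t$ away from the glueing locus. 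Here the spaces $\mathrm{C}^\infty_\mathrm{Bill}(U)$ and $\mathrm{C}^\infty_\mathrm{Bill,c}(U)$ from Section~\ref{sec:CBillP} are exactly the functions that pull back to honestly smooth functions on $\widehat M$, which is why the resolvent integral, despite the reflections, lands in $\mathrm{C}(U)$ and is actually smooth on the smooth model.

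The second step is to check that the smooth model $(\widehat M,\widehat X)$ satisfies the hypotheses of \cite{Dyatlov.2016a}: the flow $\widehat\varphi_t$ is hyperbolic on its trapped set $\widehat K$, which is the lift of $K^b$; $\widehat K$ is compact because $K^b$ is compact and non-grazing, so the lift stays away from the problematic glueing strata; and one arranges (by shrinking $U$ and choosing a suitable convex/escape function) that trajectories leaving $U$ do not return, i.e.\ the escape-function conditions of \cite{Dyatlov.2016a} hold. Granting this, \cite{Dyatlov.2016a} provides a family of operators $\widehat{\mathbf R}(\lambda)$ defined for $\mathrm{Re}(\lambda)\gg0$ by the same truncated-at-escape-time integral, whose matrix coefficients against $\mathrm C_c^\infty(\widehat U)$ functions extend meromorphically to all of $\mathbb C$ with poles in a discrete set (the Ruelle--Pollicott resonances of $\widehat X$) independent of the test functions and of the precise choice of $\widehat U$.

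The third step is bookkeeping: one identifies $\mathbf R_U(\lambda)f$ with $\pi_*\,\widehat{\mathbf R}(\lambda)\,\pi^*f$ for $f\in\mathrm C^\infty_\mathrm{Bill,c}(U)$, using that $\tau^-_U$ and its lift agree and that the integrand is unaffected by the glueing. Because $\pi^*f$ is $\mathrm C_c^\infty$ on $\widehat M$ and $\pi$ is (locally, away from a measure-zero set) a diffeomorphism preserving the relevant $\mathrm L^2$ pairing, the matrix coefficients $\langle \mathbf R_U(\lambda)f,g\rangle_{\mathrm L^2(M)}$ coincide with $\langle \widehat{\mathbf R}(\lambda)\pi^*f,\pi^*g\rangle_{\mathrm L^2(\widehat M)}$, so they inherit the meromorphic continuation and the discrete, $U$-independent pole set. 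Independence of $U$ follows from the fact that the resonances are an intrinsic spectral datum of the flow near the trapped set, together with a resolvent identity comparing $\mathbf R_U$ for two nested neighborhoods (the difference being holomorphic because it only sees trajectory segments that have already escaped the smaller set).

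I expect the main obstacle to be the first step: constructing the smooth model rigorously and proving that it is genuinely smooth and dynamically faithful near $K^b$. The subtle points are (i) showing that the no-grazing condition plus compactness of $K^b$ really does bound the number of reflections uniformly on a neighborhood $U$ and keeps the relevant phase-space points transverse to $\partial M$ (so that the reflection map is a diffeomorphism and the glueing is smooth), (ii) verifying that the glued-up vector field extends smoothly across the glueing locus — this is where the precise definition of $\mathrm C^\infty_\mathrm{Bill}$ via compatibility with the reflection involution is essential — and (iii) checking that hyperbolicity on the trapped set, which is a hypothesis on the billiard system, transfers to genuine hyperbolicity for $\widehat X$ (the reflections must not destroy the stable/unstable splitting, which is where non-grazing again enters). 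Once the smooth model is in place and shown to meet the \cite{Dyatlov.2016a} hypotheses, the meromorphic continuation and the structure of the pole set are essentially a black-box application of their results.
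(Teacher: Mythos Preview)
Your proposal is correct and follows essentially the same route as the paper: build a smooth model by gluing along the reflection map, check that compactness of the trapped set and hyperbolicity transfer, feed the smooth model into \cite{Dyatlov.2016a}, and pull the resolvent back via $\pi^*$. Two points where the paper differs in detail: (i) to place the smooth model literally inside the framework of \cite{Dyatlov.2016a} one needs a compact neighborhood with \emph{strictly convex} boundary for the flow, and the paper obtains this (together with a perturbed generator $\mathbf X_0$ equal to $\mathbf X$ near $\mathcal K$) by invoking the Conley--Easton/Robinson isolating-block machinery as packaged in \cite{Guillarmou.2017}; your ``suitable convex/escape function'' remark points at this but the step is not automatic. (ii) For the $U$-independence of the pole set the paper does \emph{not} use a resolvent identity between nested neighborhoods; instead it shows that the poles of $\mathbf R_{\mathring{\mathcal U}}(\lambda)$ coincide with the poles of the weighted zeta function $Z_1(\lambda)$, which is defined purely in terms of periodic orbits and hence manifestly independent of $\mathcal U$.
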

We will also see that on $\mathrm{C}^\infty_{\mathrm{Bill}}(U)$ we can define an infinitesimal generator $\mathbf{P}$ of the billiard flow and $\mathbf{R}_U(\lambda)$ can be seen as its resolvent in the sense that $(\mathbf{P} + \lambda) \mathbf{R}_U(\lambda) = \mathrm{id}$ on $\mathrm{Re}(\lambda) \gg 0$. The locations of the possible poles for the matrix coefficients are then defined to be the \emph{Ruelle-Pollicott resonances of the billiard flow}. 

We will actually work without additional further effort in the more general setting of billiards on Riemannian manifolds of arbitrary dimension under the assumption of a compact trapped set for the non-grazing dynamics and uniform hyperbolicity on the trapped set. \Cref{thm:intro} is then deduced as a special case of the more general \Cref{cor:Pmeromorphic}, see \Cref{rem:Thm1cor}. Furthermore, in Appendix~\ref{app:bundles} we explain how the results are transferred to vector bundles.

We obtain the meromorphic resolvent by constructing a smooth model of the billiard flow (Sections~\ref{sec:model} and \ref{construction}) and showing that this flow fits into the framework of \cite{Dyatlov.2016a} (Section~\ref{zeta}). For the latter step we  crucially use results of Conley-Easton \cite{Conley.1971} and Robinson \cite{Robinson.1980} similarly as it was done in \cite{Guillarmou.2017, Dyatlov.2018}. The construction of the smooth model allows to work with the well-known notion of resolvent of a smooth vector field (Theorem \ref{thm:resolvent}) and by \cite{Dyatlov.2016a} we directly get the meromorphic continuation and precise wavefront estimates for this resolvent. Now this allows for a large variety of applications: A first such application is established in this article by the \emph{meromorphic continuation to $\mathbb{C}$} of weighted zeta functions, which in the most simple case of constant weight take the form
\begin{equation*}
\sum_\gamma \frac{T_\gamma^\# \exp\left( -\lambda T_\gamma \right)}{\vert \mathrm{det}\left( \mathrm{id} - \mathcal{P}_\gamma \right)\vert}, \qquad \mathrm{Re}(\lambda) \gg 0,
\end{equation*}
where the sum ranges over all closed trajectories $\gamma$ of the billiard dynamics, $T_\gamma^\#$ denotes the primitive period of $\gamma$ and $\mathcal{P}_\gamma$ its Poincar\'{e} map (for precise definitions and the general theorem we refer to Section~\ref{zeta_continuation}). In the case of 3-disk systems these weighted zeta functions allow the efficient numerical calculation of invariant Ruelle densities (see \cite[Appendix]{Schuette.2021}). In addition, the meromorphic resolvent allows the rigorous study of semiclasscial zeta functions that are of great importance for understanding quantum resonances. Based on the present framework of smooth models Chaubet and Petkov meromorphically continued semiclassical zeta functions and solved the modified Lax-Philipps conjecture. Furthermore in \cite{BSW22} the results regarding meromorphically continued zeta functions were used to provide a rigorous interpretation of semiclassical zeta functions for Wigner distributions as introduced by Ekchardt~et.~al.~\cite{EFMW92}. 
Another application is given by Yann Chaubet's work \cite{Chaubet.2021} on counting asymptotics of periodic trajectories with a fixed reflection number at some fixed obstacle based on previous results in a non-billiard setting \cite{chaubet2021closed}. 

Let us finally mention a recent related result on the definition of Ruelle-Pollicott resonances for billiard flows. In \cite{zbMATH06830045} Baladi, Demers and Liverani perform a meromorphic continuation of the billiard resolvent for the Sinai billiard with finite horizon together with a spectral gap in order to establish exponential mixing. This setting is technically much more challenging than ours because the grazing trajectories cannot be separated from the dynamically relevant region, which is possible in our case by the no-grazing condition.

\subsection*{Acknowledgments}
This project was initiated during P.S.'s stay at MIT in spring 2020 and profited from numerous fruitful discussions with Semyon Dyatlov. We greatly acknowledge his input. We furthermore thank Colin Guillarmou for very helpful discussions concerning the ideas drawn from his work \cite{Guillarmou.2017} and  Yann Chaubet for stimulating discussions and helpful suggestions on an earlier version of the manuscript. This work has received funding from the Deutsche Forschungsgemeinschaft (DFG) (Grant No. WE 6173/1-1 Emmy Noether group ``Microlocal Methods for Hyperbolic Dynamics'' as well as through the Priority Programme (SPP) 2026 ``Geometry at Infinity''). P.S. gratefully acknowledges support from the Studienstiftung des deutschen Volkes.

\section{Geometric setup} \label{geo_setup}

As mentioned before, the main purpose of this work is to handle the case of obstacle scattering in the Euclidean plane. However, we can perform all constructions without considerable further effort in a more general setting by replacing the Euclidean plane by a complete connected Riemannian manifold of arbitrary dimension, see e.g. \cite{burago2002}. In this section we will introduce the notation and define the billiard flow. 

Let $(\Sigma,g)$ be a complete connected Riemannian manifold of dimension $n\geq 1$ without boundary, $S\Sigma \defgr \{(x,v)\in T\Sigma\,|\, g_x(v,v)=1\}\subset T\Sigma$ its unit tangent bundle, and denote by $\mathrm{pr}: S\Sigma\rightarrow \Sigma$, $(x,v)\mapsto x$, the bundle projection as well as by $\varphi^g:\R\times S\Sigma\to S\Sigma$, $(t, x, v)\mapsto \varphi^g_t(x,v)$, the geodesic flow. Let $\Omega\subset \Sigma$ be an $n$-dimensional submanifold with boundary of $\Sigma$. We do not assume $\Omega$ to be connected -- in fact, we regard the connected components of $\Omega$ as \emph{obstacles}. Denoting the manifold boundary and the manifold interior of $\Omega$ by $\partial\Omega$ and $\mathring{\Omega}\defgr \Omega\setminus \partial\Omega$, respectively, we note that $\mathring{\Omega}\neq \emptyset$ and $\partial\Omega\neq \emptyset$ if $\Omega\neq \emptyset$ and define the phase space for our \emph{geodesic billiard} to be
\begin{equation*}
M\defgr S(\Sigma\setminus \mathring{\Omega}) = \mathrm{pr}^{-1}(\Sigma\setminus\mathring{\Omega}) ,
\end{equation*}
the unit tangent bundle over $\Sigma\setminus\mathring{\Omega}$. The space $M$ is a $(2n - 1)$-dimensional submanifold with boundary of $S\Sigma$. Its boundary and manifold interior are given by
\begin{equation}
\partial M = (S\Sigma)|_{\partial\Omega},\qquad \mathring{M} = S(\Sigma\setminus\Omega) .
\end{equation}
Each fiber $S_x \Sigma$, $x\in \partial\Omega$, of the $(n-1)$-sphere bundle $\partial M$  intersects the tangent space $T_x (\partial\Omega)$ in the $(n-2)$-sphere $S_x (\partial\Omega)$. The latter divides $S_x \Sigma$ into two disjoint open hemispheres\footnote{If $n=1$, then $S_x (\partial\Omega)$ is empty; it remains true that the $0$-sphere $S_x\Sigma$ is divided into two disjoint open ``hemispheres'' in this case, namely the two points of which it consists. Eq.\ \eqref{eq:singinout} then holds with $\partial_\mathrm{g} M = \emptyset$.}: One of them is the \emph{inward} hemisphere $(S_x\Sigma)_\mathrm{in}$ containing all vectors ``pointing towards $\Omega$'', i.e., all $v\in S_x\Sigma$ satisfying $g_x(v, n_x) > 0$ with $n_x$ the inward unit normal vector at $x\in\partial\Omega$. The other one is the \emph{outward} hemisphere $(S_x\Sigma)_\mathrm{out}$ defined by $g_x(v, n_x) < 0$. This fiber-wise decomposition effects a decomposition of the $(2n-2)$-dimensional manifold $\partial M$ into a disjoint union
\bq
\partial M  = \partial_\mathrm{g} M \sqcup \partial_\mathrm{in} M \sqcup \partial_\mathrm{out} M\label{eq:singinout}
\eq
of the \emph{grazing boundary} 
\bqn
\partial_\mathrm{g} M \defgr \partial M\cap T (\partial\Omega),
\eqn
a $(2n-3)$-dimensional submanifold of $\partial M$, as well as the \emph{inward} and \emph{outward boundaries} 
\bqn
\partial_\mathrm{in/out} M:= \bigcup_{x\in \partial\Omega}(S_x\Sigma)_\mathrm{in/out},
\eqn 
which are open in $\partial M$. See Figure \ref{fig01} for an illustration of the three boundary components.
\begin{figure}[h]
\centering
\includegraphics[width=0.8\textwidth, trim={0cm 0cm 0cm 0cm}]{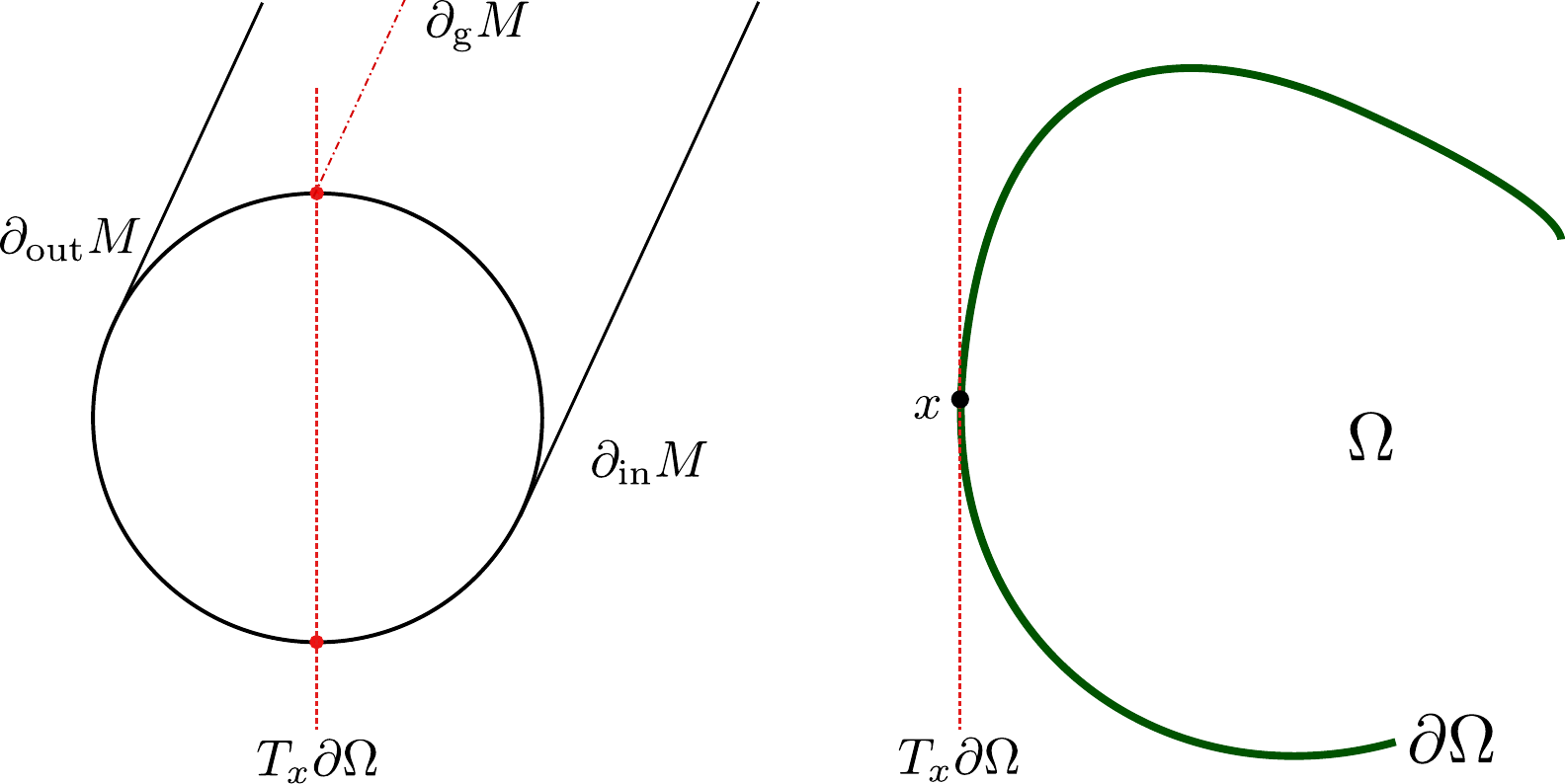}
\caption{The boundary components $\partial_\mathrm{g} M$, $\partial_\mathrm{in} M$, and $\partial_\mathrm{out} M$ in an example where $\Sigma=\R^2$. Notice how $\partial_\mathrm{g} M$ divides each connected component of $\partial M$ into two disjoint connected open subsets. Here the sphere bundle $\partial M=(S\Sigma)|_{\partial \Omega}$, which locally looks like a cylinder, has been cut at the fiber over a point $x\in \partial \Omega$.}
\label{fig01}
\end{figure}

\begin{remark}
The constructions carried out and the theorems proved in the subsequent sections could be easily generalized to a setting where we replace the obstacle boundary $\partial\Omega$ by an arbitrary codimension-1 submanifold $B\subset \Sigma$ at which the specular reflections of $\varphi^g$ occur and which is not necessarily the boundary of an $n$-dimensional submanifold $\Omega$ with boundary. This setting would be more general because not every codimension-1 submanifold occurs as the boundary of a codimension-0 submanifold. However, the example of the circle $\Sigma=S^1$ with $B=\{\mathrm{pt}\}$ a single point, for which our constructions do not work due to the lack of a distinction between ``interior'' and ``exterior'', shows that we would then have to impose some additional assumptions on $B$ or to generalize our methods in a somewhat tedious way, so we stick to the case $B = \partial\Omega$ to simplify the presentation.
\end{remark}

\subsection{Non-grazing billiard dynamics}\label{sec:billiard}

We  focus on a \emph{non-grazing} geodesic billiard dynamics. That is, we define trajectories on $M\setminus\partial_\mathrm{g} M$ by the rule that they are given by those of the geodesic flow $\varphi^g$ until they hit $\partial M$. Then we distinguish two cases: 
\begin{enumerate}
 \item If a trajectory hits $\partial M$ in a non-grazing way, then the velocity vector is reflected at the tangent hyperplane of the obstacle.
 \item If a trajectory hits $\partial M$ in a grazing way, then it ceases to exist. 
\end{enumerate}
\begin{remark}\label{rem:notflow}Before we make the above rules precise, we emphasize that the obtained trajectories will not actually combine to a flow $\varphi$ on $M\setminus\partial_\mathrm{g} M$ in the technical sense because the flow property $\varphi_t(\varphi_{t'}(p))=\varphi_{t+t'}(p)$ is violated for some points $p$ when $t+t'=0$, as explained around \eqref{eq:flowprop}.  For simplicity of the terminology, we shall adopt the convention that we call the map $\varphi$ formed by the billiard trajectories a \emph{flow} regardless of the partial violation of the flow property. We will see that this does not cause any problems. This generalization of terminology will be applied exclusively to $\varphi$; all other maps called \emph{flow} are honest flows in the usual sense.
\end{remark}
For convex obstacles one could of course continue the flow at grazing reflections as in the introduction. However, in the following we would  anyway be forced to remove such grazing trajectories because they lead to a non-smooth dynamics. It is therefore more convenient to start right away with a non-complete flow that stops at grazing reflections. As a side effect we do not need to make any further a priori assumptions on the nature of the boundary such as absence of inflection points (cf. \cite[Assumption A3]{Chernov.2006}). Instead, the necessary assumptions can be efficiently formulated as compactness of the trapped set of the non-grazing billiard flow and hyperbolicity of the flow on that set, see Section~\ref{section_trapped}.

\subsubsection{Definition of the non-grazing billiard flow}

To translate the above rule into a formal definition, we first define a tangential reflection $\partial M\to \partial M$, $(x,v)\mapsto (x,v')$, by letting $v'\in S_x\Sigma$ be the reflection of $v\in S_x\Sigma$ at the hyperplane $T_x (\partial\Omega)\subset T_x\Sigma$. In terms of the inward unit normal $n_x$ this reflection is given by $v' = v - 2g_x(v, n_x) n_x$. The tangential reflection fixes $\partial_\mathrm{g} M$ and interchanges the two boundary components $\partial_\mathrm{in} M$ and $\partial_\mathrm{out} M$. 

For each point $(x,v)\in M\setminus \partial_\mathrm{g} M$ we now define the following extended real numbers which correspond to the first boundary intersection in forward and backward time:
\begin{align}\begin{split}
t_-(x,v) &\defgr \begin{cases}\sup\{t < 0\,|\, \varphi^g_{t}(x,v)\in S\Omega\} , & (x,v)\in \mathring{M}\cup \partial_\mathrm{in} M,\\
\sup\{t < 0\,|\, \varphi^g_{t}(x,v')\in S\Omega\} , & (x,v)\in \partial_\mathrm{out} M
\end{cases}\qquad \in [-\infty,0),\\
t_+(x,v) &\defgr \begin{cases}\inf\{t > 0\,|\, \varphi^g_{t}(x,v)\in S\Omega \}, &(x,v)\in \mathring{M}\cup \partial_\mathrm{out} M,\\
\inf\{t > 0\,|\, \varphi^g_{t}(x,v')\in S\Omega \}, &(x,v)\in \partial_\mathrm{in} M
\end{cases}\qquad \in (0,\infty].\label{eq:tpm}\end{split}
\end{align}
Here the inequalities $t_+(x, v) > 0$ and $t_-(x, v) < 0$ follow in the case $(x,v)\in \mathring{M}$ from the continuity of  $\varphi^g$ and the fact that $\mathring{M}$ is open in $S\Sigma$, and in the case $(x,v)\in \partial M\setminus \partial_\mathrm{g} M$ from the fact that the flow $\varphi^g$ is transversal to $\partial M\setminus \partial_\mathrm{g} M$. 

We begin with the local definition of the billiard trajectories near the boundary: First, let $(x, v) \in \mathring{M}$. Then the billiard flow simply equals the geodesic flow as long as it does not meet an obstacle boundary: 
\begin{equation}
\varphi_t(x, v) \defgr \varphi_t^g(x, v), \qquad  t_-(x, v) < t < t_+(x, v).
\label{eq:localflow1}
\end{equation}
If the geodesic flow meets an obstacle boundary in a non-grazing way, i.e., if $t_\pm(x, v)\in \R$ and $\varphi_{t_\pm(x, v)}^g(x, v)\not\in \partial_\mathrm{g} M$, then we extend the trajectory via the following obvious definition:
\begin{equation} \label{eq:bdrydefmin}
\varphi_{t_\pm(x, v)}(x, v) \defgr\varphi_{t_\pm(x, v)}^g(x, v) .
\end{equation}
Now let $(x, v)\in\partial M\setminus \partial_\mathrm{g} M$. Then we have to distinguish between inward and outward components:
\begin{equation}
\begin{split}
\varphi_t(x, v) \defgr 
\begin{cases}
\varphi_t^g(x, v), &~~(x, v) \in\partial_\mathrm{out} M,~ 0\leq t< t_+(x, v)\\
&\text{or}~ (x, v) \in\partial_\mathrm{in} M,~ t_-(x, v)< t\leq 0,\medskip\\
\varphi_t^g(x, v'), &~~(x, v) \in\partial_\mathrm{in} M,~ 0 < t< t_+(x, v')\\
&\text{or}~ (x, v) \in\partial_\mathrm{out} M,~ t_-(x, v')< t < 0 .
\end{cases}
\end{split}
\label{eq:localflow2}
\end{equation}
The fact that $\varphi^g$ is a flow implies that the trajectories of $\varphi$ also have the flow property except for boundary points where the flow property only holds modulo reflection. More precisely, whenever $\varphi_{t'}(\varphi_{t}(x, v))$ and $\varphi_{t+t'}(x, v)$ are defined by \eqref{eq:localflow1}, \eqref{eq:bdrydefmin}, or \eqref{eq:localflow2}, one has
\begin{equation} \label{eq:flowprop}
\varphi_{t'}(\varphi_{t}(x, v))=\begin{dcases}\varphi_{t+t'}(x, v),& t + t' \neq 0\text { or }\varphi_{t+t'}(x,v)\in \mathring M\text{ or } \\
&t<0,(x, v) \in\partial_\mathrm{in} M\text { or } t>0,(x, v) \in\partial_\mathrm{out} M,\\
(x,v'), & t+t'=0\text { and }t>0,(x, v) \in\partial_\mathrm{in} M \text { or }\\
& t+t'=0\text { and } t<0,(x, v) \in\partial_\mathrm{out} M,
\end{dcases}
\end{equation}
while $\varphi_0(x,v)=(x,v)$ for all $(x,v)\in M\setminus \partial_\mathrm{g} M$. 
The additional boundary reflection does not pose any problems for the remaining paper, though, as points on the boundary related by reflection will be indistinguishable in our smooth models anyway.

Finally we extend the trajectories to their maximal lengths, which is formally somewhat cumbersome: We define for $(x, v)\in M\setminus \partial_\mathrm{g} M$  recursively 
\begin{align*}
t^0_\pm(x,v) &\defgr t_\pm(x,v),\\
t^n_\pm(x,v) &\defgr \begin{cases} t^{n-1}_\pm(x,v) + t_\pm\big(\varphi^g_{t^{n-1}_\pm}(x,v)\big), & t^{n-1}_\pm(x,v)\in \R ~\text{and}~ \varphi^g_{t^{n-1}_\pm}(x,v) \not\in \partial_\mathrm{g} M,\\
t^{n-1}_\pm(x,v), &\text{else},
\end{cases}
\end{align*}
where $n\in \mathbb{N}$. Then the sequences $\{t^n_+(x,v)\}_{n\in \N_0}\subset (0,\infty]$ and $\{t^n_-(x,v)\}_{n\in \N_0}\subset[-\infty,0)$ 
are non-decreasing and non-increasing, respectively, and we put
\begin{equation*}
T_\mathrm{max}(x,v) \defgr \limsup_{n\to \infty} t^n_+(x,v)\in (0, \infty], \qquad T_\mathrm{min}(x,v) \defgr \liminf_{n\to \infty} t^n_-(x,v)\in [-\infty, 0) .
\end{equation*}
These are the maximal times for which the non-grazing billiard flow can be defined: Given $t\in (T_\mathrm{min}(x,v), T_\mathrm{max}(x,v))$ we can find some $N\in \N_0$ and real numbers $t_0, \ldots, t_N\in (T_\mathrm{min}(x,v), T_\mathrm{max}(x,v))$ with $\sum_{j=0}^N t_j=t$ and such that every term in the composition
\begin{equation} \label{eq:defflow}
\varphi_{t}(x, v) \defgr \varphi_{t_0} ( \varphi_{t_1}(\cdots(\varphi_{t_N}(x, v))\cdots) )
\end{equation}
is well-defined by either \eqref{eq:localflow1}, \eqref{eq:bdrydefmin}, or \eqref{eq:localflow2}. This definition of $\varphi_t(x, v)$ is then independent of the choice of the numbers $t_0, \ldots, t_N$ by virtue of the flow property \eqref{eq:flowprop}. The definition \eqref{eq:defflow} extends the trajectory through $(x, v)$ such that in summary, using the extended terminology from Remark \ref{rem:notflow}, we obtain a flow 
$$
\varphi:D\to M\setminus \partial_\mathrm{g} M,\qquad (t,x,v)\mapsto \varphi_t(x,v),
$$
on the domain given by
\bq
D \defgr \{(t, x, v)\in \mathbb{R}\times (M\setminus \partial_\mathrm{g} M) \,|\, t\in (T_\mathrm{min}(x,v),T_\mathrm{max}(x,v))\}.\label{eq:D}
\eq
Some basic properties of this domain will be proved below in \Cref{lem:D}. In particular, we shall see that $D$ is open in $\mathbb{R}\times (M\setminus \partial_\mathrm{g} M)$. In the following we will call the triple $(\Sigma, g, M)$ a \emph{geodesic billiard system} and $\varphi$ its associated \emph{non-grazing billiard flow}, keeping Remark \ref{rem:notflow} in mind.

\subsection{Properties of the non-grazing billiard flow}\label{sec:techn} 

By definition, the flow $\varphi$ has discontinuities at $\varphi^{-1}(\partial M\setminus \partial_\mathrm{g} M )$, which makes the following constructions necessary in the first place. A further property of the flow $\varphi$ that can be read off directly from its definition and which will become important below is its invariance under tangential reflections at the boundary:
\bq
\forall\ (t,x,v)\in D\cap((\R\setminus\{0\}) \times \partial M): \quad (t, x, v')\in D,~ \varphi(t, x, v) = \varphi(t, x, v').\label{eq:invariance}
\eq
While $\varphi$ itself is not continuous except in the trivial case $\partial M = \emptyset$, its composition with the projection $\mathrm{pr}: M\rightarrow \Sigma\setminus \mathring{\Omega}$ is continuous and describes the \emph{spatial billiard dynamics}. For graphical illustrations of $\mathrm{pr}\circ\varphi$  and $\varphi$ see Figures \ref{fig02} and \ref{fig03}.

\begin{figure}[H]
\centering
\includegraphics[width=0.7\textwidth, trim={0cm 0cm 0cm 0cm}]{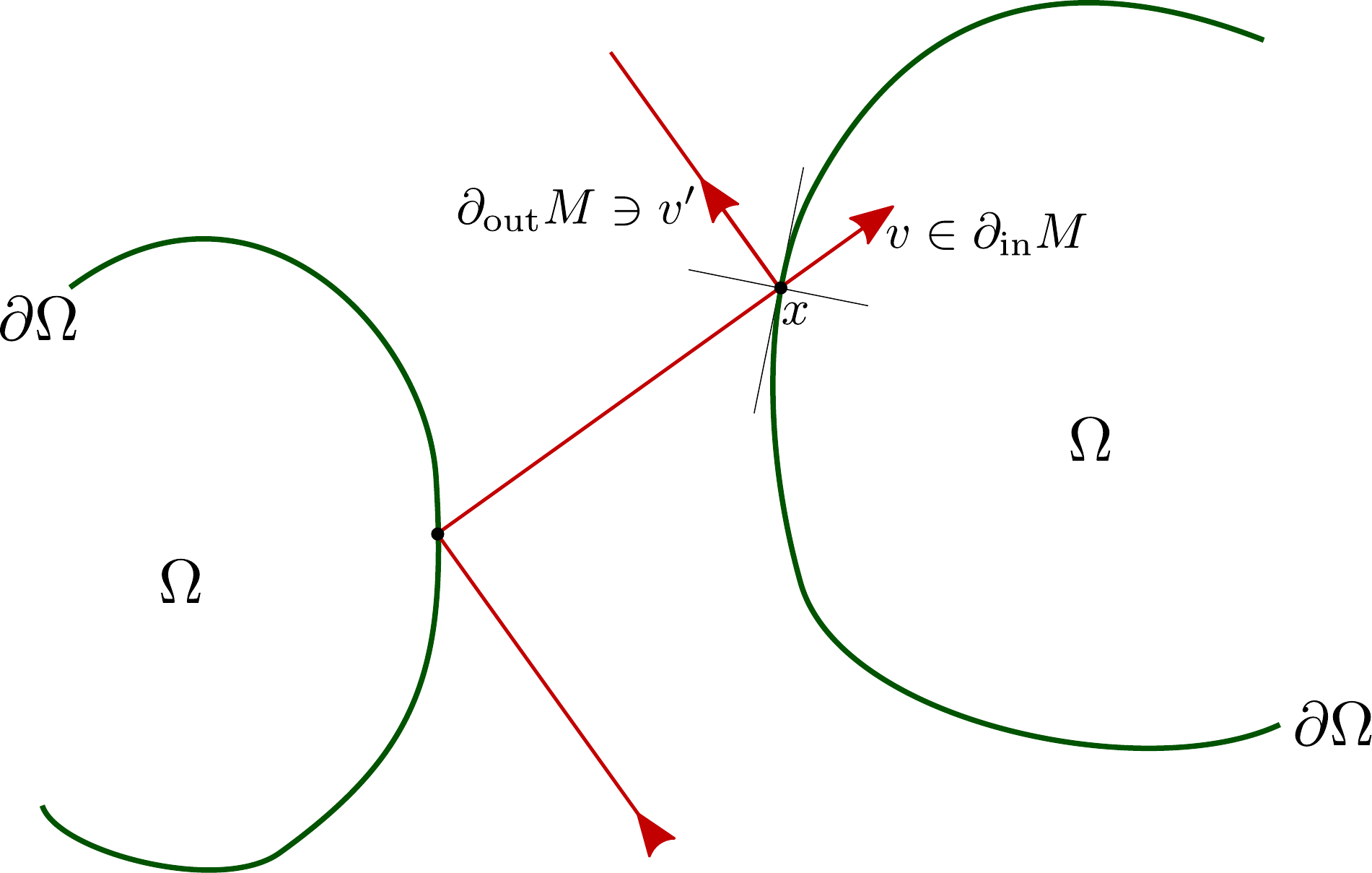}
\caption{A trajectory of the spatial billiard dynamics described by the composition $\mathrm{pr}\circ\varphi:\R\times M\to \Sigma\setminus\mathring \Omega$ in an example where $\Sigma=\R^2$ is the Euclidean plane. The arrows indicate a point $(x,v)\in \partial_\mathrm{in} M$ as well as its tangential reflection $(x,v')\in \partial_\mathrm{out} M$.}
\label{fig02}
\end{figure}

\begin{figure}[h]
\centering
\includegraphics[width=0.8\textwidth, trim={0cm 0cm 0cm 0cm}]{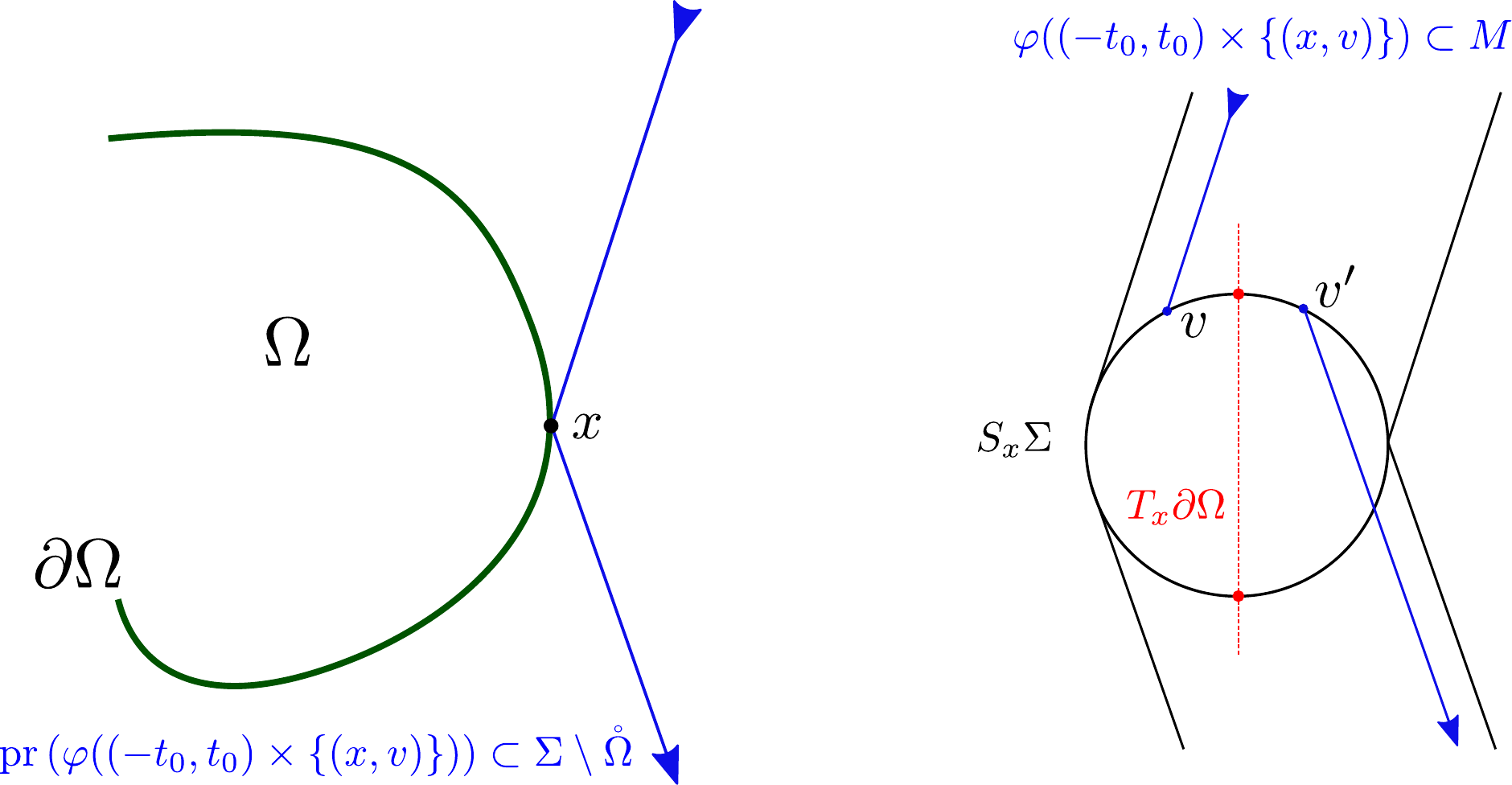}
\caption{The partial flow trajectory $\varphi((-t_0,t_0)\times \{(x,v)\})\subset M$ of a point $(x,v)\in \partial_\mathrm{in} M$ and its spatial projection in an example where $\Sigma=\R^2$ is the Euclidean plane. The right-hand side depicts the sphere bundle over the spatial trajectory, which locally looks like a cylinder.  Notice the discontinuity in the flow trajectory due to the tangential reflection $(x,v)\mapsto (x,v')$. Projecting the trajectory onto $\Sigma$ removes this discontinuity.}
\label{fig03}
\end{figure}

In spite of its discontinuous nature, the non-grazing billiard flow $\varphi$ possesses useful transversality properties at the non-grazing boundary $\partial M\setminus \partial_\mathrm{g} M$ which we collect in the technical Lemma \ref{lem:convexity} below. This lemma will allow us to prove, among other statements, that the domain $D$ is open in $\mathbb{R}\times (M\setminus \partial_\mathrm{g} M)$ and we will later use Lemma \ref{lem:convexity} to consider the flow-time as a ``coordinate'' transverse to $\partial M\setminus\partial_\mathrm{g} M$.  
\begin{lem}\label{lem:convexity}
There exists an open subset $N\subset\R\times (\partial M\setminus \partial_\mathrm{g}M)$ such that
\begin{enumerate}
\item[i)] One has the inclusions $\{0\}\times \left(\partial M\setminus \partial_\mathrm{g}M \right)\subset N \subset D$. 
\item[ii)] The set $N$ is invariant under tangential reflection in the sense that for all $(t,x,v)\in\R\times (\partial M\setminus \partial_\mathrm{g}M)$ one has $(t,x,v)\in N$ iff $(t,x,v')\in N$.
\item[iii)] The restricted map $\varphi|_N: N \to M$ is open and its image is contained in $M\setminus \partial_\mathrm{g}M$.
\item[iv)] Decomposing $N$ into the two disjoint open subsets 
$$
N_\mathrm{in}:=N\cap (\R\times \partial_\mathrm{in} M),\qquad N_\mathrm{out}:=N\cap (\R\times \partial_\mathrm{out} M),
$$
the two  maps $\varphi|_{N_\mathrm{in/out}}: N_\mathrm{in/out} \to M\setminus \partial_\mathrm{g}M$ are injective.
\item[v)] The two inverse maps $\varphi|_{N_\mathrm{in/out}}^{-1}:\varphi(N_\mathrm{in/out})\to N_\mathrm{in/out}$ are smooth.
\item[vi)] Decomposing $N_\mathrm{in}$ and $N_\mathrm{out}$ further into the subsets
  \bq
N^\pm_\mathrm{in/out}:=N_\mathrm{in/out}\cap (\R_\pm\times \partial_\mathrm{in/out} M), \qquad  \R_\pm:=\{t\in \R\,|\,\pm t \geq 0\},\label{eq:Ndecomp}
\eq
one has 
\bq
\varphi_t(x,v)=\begin{cases}\varphi_t^g(x,v), & (t,x,v)\in N^-_\mathrm{in}\cup N^+_\mathrm{out},\\
\varphi_t^g(x,v'), &(t,x,v)\in (N_\mathrm{in}\setminus N^-_\mathrm{in})\cup (N_\mathrm{out}\setminus N^+_\mathrm{out}).
\end{cases}\label{eq:varphivarphig}
\eq
\end{enumerate}
\end{lem}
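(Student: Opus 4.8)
The plan is to construct $N$ explicitly near each point of $\{0\}\times(\partial M\setminus\partial_{\mathrm g}M)$ using the transversality of the geodesic flow $\varphi^g$ to the non-grazing boundary, and then to glue these local pieces into a single open set that is symmetric under the tangential reflection $(x,v)\mapsto(x,v')$. First I would fix $(x_0,v_0)\in\partial M\setminus\partial_{\mathrm g}M$; without loss of generality $(x_0,v_0)\in\partial_{\mathrm{in}}M$. Since $g_{x_0}(v_0,n_{x_0})>0$, the geodesic flow $\varphi^g$ crosses $\partial M$ transversally at $(x_0,v_0)$, so there is a local hypersurface-transversal flow box: a neighbourhood $W$ of $(x_0,v_0)$ in $S\Sigma$ and $\eps>0$ such that $(s,p)\mapsto\varphi^g_s(p)$ is a diffeomorphism from $(-\eps,\eps)\times(W\cap\partial M)$ onto an open subset of $S\Sigma$, with the sign of $s$ detecting on which side of $\partial M$ the point $\varphi^g_s(p)$ lies. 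Shrinking $W$ and $\eps$, I can arrange that $\varphi^g_s(p)\in\mathring M$ for $s>0$ and $\varphi^g_s(p)\in S\mathring\Omega$ for $s<0$ when $p\in W\cap\partial_{\mathrm{in}}M$; in particular $t_+(p)>\eps$ and $t_-(p)=$ the first negative return, but what we need is only that for $|t|<\eps$ the formula \eqref{eq:localflow2} applies verbatim and places $\varphi_t(p)$ in $M\setminus\partial_{\mathrm g}M$ (after further shrinking so the box avoids $\partial_{\mathrm g}M$, which is closed in $\partial M$ and disjoint from $(x_0,v_0)$).

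Next I would symmetrize. The tangential reflection $r\colon\partial M\to\partial M$ is a diffeomorphism interchanging $\partial_{\mathrm{in}}M$ and $\partial_{\mathrm{out}}M$ and fixing $\partial_{\mathrm g}M$, and by \eqref{eq:invariance} the flow satisfies $\varphi(t,x,v)=\varphi(t,x,v')$ for $t\neq0$. So for the local box built over $W\cap\partial_{\mathrm{in}}M$ I take its reflected copy over $r(W)\cap\partial_{\mathrm{out}}M$ and let the local contribution to $N$ be $(-\eps,\eps)\times\bigl((W\cup r(W))\cap(\partial M\setminus\partial_{\mathrm g}M)\bigr)$. Taking the union over a locally finite cover of $\partial M\setminus\partial_{\mathrm g}M$ by such boxes (or simply the union over all of them) produces an open $N\subset\R\times(\partial M\setminus\partial_{\mathrm g}M)$ containing $\{0\}\times(\partial M\setminus\partial_{\mathrm g}M)$, contained in $D$ by construction of the escape times, invariant under $r$ by construction, giving (i) and (ii). Property (vi) is then immediate: on $N^-_{\mathrm{in}}\cup N^+_{\mathrm{out}}$ we are in the first case of \eqref{eq:localflow2} and $\varphi_t=\varphi^g_t$, while on the complementary pieces we are in the second case and $\varphi_t(x,v)=\varphi^g_t(x,v')$.

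For (iii)--(v) I would read everything off \eqref{eq:varphivarphig}. On $N_{\mathrm{in}}$, after composing with $r$ on the outgoing-time part, $\varphi|_{N_{\mathrm{in}}}$ is a restriction of the smooth map $(t,x,v)\mapsto\varphi^g_t(x,v)$ respectively $(t,x,v)\mapsto\varphi^g_t(x,v')$; since the flow box is a diffeomorphism onto its image when the base variable is restricted to $\partial M$ and the flow is transversal there, this restriction is an open map with smooth local inverse, giving the openness of $\varphi|_N$ in (iii) and the smoothness of the inverses in (v). Injectivity of $\varphi|_{N_{\mathrm{in}}}$ and $\varphi|_{N_{\mathrm{out}}}$ in (iv): on a single flow box this is the flow-box diffeomorphism; globally one notes that a point $q=\varphi_t(x,v)$ with $(x,v)\in\partial_{\mathrm{in}}M$ determines the sign of $t$ (it is $\leq0$ iff $q$ lies on the incoming geodesic ray through $(x,v)$, i.e. iff $q$ is reached from $(x,v)$ without reflection) and then determines $|t|$ and $(x,v)$ as the unique boundary point on the corresponding geodesic segment through $q$, using that along a non-grazing geodesic segment inside $M\setminus\partial_{\mathrm g}M$ one cannot hit $\partial M$ twice without an intervening passage through $S\mathring\Omega$, which \eqref{eq:localflow2} excludes within the box. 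I expect the main obstacle to be exactly this last point: ensuring the boxes are chosen small enough that the local picture (one clean transversal crossing, no second boundary hit, no grazing point) is globally consistent, so that injectivity on each of $N_{\mathrm{in}},N_{\mathrm{out}}$ and well-definedness of the smooth inverses survive the gluing. This is where the transversality of $\varphi^g$ to $\partial M\setminus\partial_{\mathrm g}M$ and the closedness of $\partial_{\mathrm g}M$ in $\partial M$ do the real work, and it is worth being careful that shrinking $\eps$ uniformly near a grazing point is genuinely possible — which it is, because near $\partial_{\mathrm g}M$ the transversality degenerates but we simply stay away from it.
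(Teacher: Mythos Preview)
Your proposal follows essentially the same strategy as the paper (transversality of $\varphi^g$ to $\partial M\setminus\partial_{\mathrm g}M$, flow boxes, then reading off (vi) from \eqref{eq:localflow2}), and you correctly identify the real difficulty as the global injectivity in (iv). However, you do not actually resolve it. Taking a union of local flow boxes gives a set $N$ on which $\varphi^g$ is only a \emph{local} diffeomorphism; there is nothing preventing a short geodesic segment from connecting two distinct non-grazing boundary points lying in different boxes, so your argument that ``$q$ determines the sign of $t$ and then the unique boundary point'' is not justified by your construction. The paper's fix is to shrink $N$ once more so that $\varphi^g|_N:N\to S\Sigma$ is a \emph{global} diffeomorphism onto its image --- this is the standard tubular-neighbourhood argument, available because the local diffeomorphism $(t,x,v)\mapsto\varphi^g_t(x,v)$ is injective on the core $\{0\}\times(\partial M\setminus\partial_{\mathrm g}M)$. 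With this in hand, injectivity of $\varphi|_{N_{\mathrm{in}}}$ is immediate: if $\varphi_{t_1}(x_1,v_1)=\varphi_{t_2}(x_2,v_2)$ with both $(x_i,v_i)\in\partial_{\mathrm{in}}M$, the formula (vi) together with the reflection symmetry of $N$ converts this into an equality of values of $\varphi^g$ at two points of $N$, and global injectivity of $\varphi^g|_N$ forces the signs to agree and then the points to coincide. The openness in (iii) and the smoothness of the inverses in (v) are likewise read off from the global diffeomorphism $\varphi^g|_N$ rather than from local flow boxes.

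Two minor points: your sign convention in the local flow box is reversed --- for $(x,v)\in\partial_{\mathrm{in}}M$ one has $\varphi^g_s(x,v)\in S\mathring\Omega$ for small $s>0$ and $\varphi^g_s(x,v)\in\mathring M$ for small $s<0$, not the other way around. And your final sentence about ``shrinking $\eps$ uniformly near a grazing point'' is a red herring: no uniformity near $\partial_{\mathrm g}M$ is needed, since one works on the open set $\partial M\setminus\partial_{\mathrm g}M$ throughout.
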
The proof of Lemma \ref{lem:convexity} is given in Appendix \ref{sec:proof1}. See Figure \ref{fig31} for an illustration of $N_\mathrm{in/out}$ and $\varphi(N_\mathrm{in/out})$ in a $2$-dimensional example.
\begin{remark}
Note that by \emph{i)} and \emph{iii)} the images $\varphi(N_\mathrm{in/out})$ are open subsets of $M\setminus \partial_\mathrm{g}M$ which intersect the boundary $\partial M\setminus \partial_\mathrm{g}M$ non-trivially. In particular, the sets $\varphi(N_\mathrm{in/out})$ are themselves manifolds with non-empty boundaries (except in the trivial case $\partial M=\emptyset$). In contrast, the open sets $N_\mathrm{in/out} \subset \R\times \partial_\mathrm{in/out}M$ are manifolds without boundary. 
\end{remark}

\begin{figure}[h]
\centering
\includegraphics[width=0.8\textwidth]{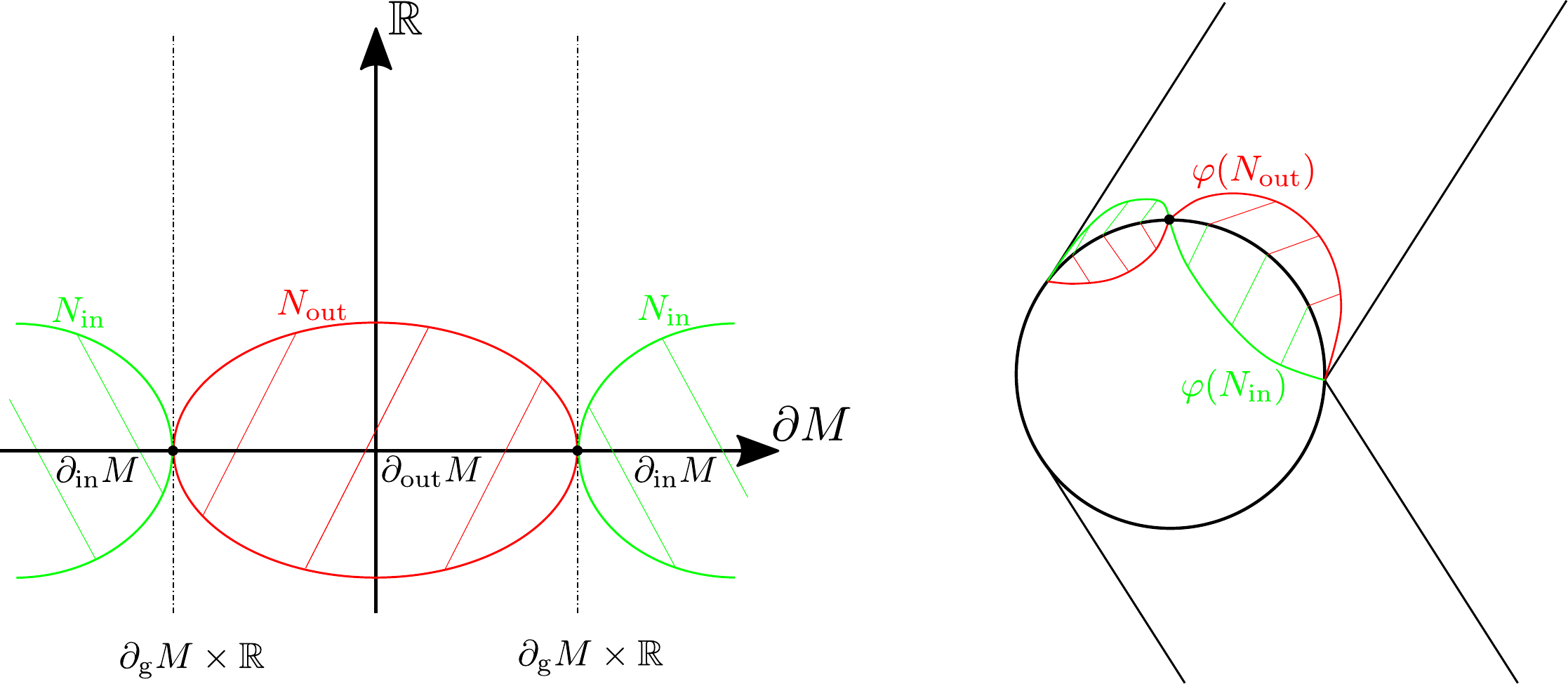}
\caption{Schematic illustration of the sets $N_\mathrm{in/out}$ from Lemma \ref{lem:convexity} and their images under $\varphi$ in a Euclidean setting as in Figure \ref{fig03}. Note that $N_\mathrm{in/out}$ and $\varphi(N_\mathrm{in/out})$ are actually $3$-dimensional. In the image on the left-hand side the dimension has been reduced by drawing the $2$-dimensional manifold $\partial M$ simply as a coordinate axis, whereas on the right-hand side the dimension has been reduced by focusing on the circle $S_x\Sigma\subset \partial M$ over some chosen point $x\in \partial \Omega$. }
\label{fig31}
\end{figure}

The following definition is motivated by the subsequent important lemma.
\begin{defn}\label{def:symm}We call a set $A\subset \partial M\setminus \partial_\mathrm{g} M$  \emph{reflection-symmetric} if for every point $(x,v)\in A$ its tangential reflection $(x,v')$ also belongs to $A$.
\end{defn}
Using this terminology we can conveniently describe a crucial continuity property of the non-grazing billiard flow $\varphi$ which puts the discontinuity of the latter into perspective:
\begin{lem}\label{lem:D}Let $O\subset M\setminus \partial_\mathrm{g} M$ be an open set such that $O\cap (\partial M\setminus \partial_\mathrm{g} M)$ is reflection-symmetric. Then $\varphi^{-1}(O)\subset D$ is open in $\R\times (M\setminus \partial_\mathrm{g} M)$ and 
invariant under tangential reflection in the sense that 
\bq
\forall\;(t,x,v)\in \R\times (\partial M\setminus \partial_\mathrm{g} M): (t,x,v)\in \varphi^{-1}(O) \iff (t,x,v')\in \varphi^{-1}(O).\label{eq:Dinv}
\eq
In particular, the domain $D=\varphi^{-1}(M\setminus \partial_\mathrm{g} M)$ is open in $\R\times (M\setminus \partial_\mathrm{g} M)$ and satisfies \eqref{eq:Dinv}. 
\end{lem}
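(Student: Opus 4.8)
The \emph{in particular} clause and the reflection invariance \eqref{eq:Dinv} are the easy parts: since $\varphi$ takes values in $M\setminus\partial_\mathrm{g}M$ by construction we have $D=\varphi^{-1}(M\setminus\partial_\mathrm{g}M)$, and $M\setminus\partial_\mathrm{g}M$ is itself an admissible choice for $O$; furthermore \eqref{eq:Dinv} for $t\neq 0$ is immediate from \eqref{eq:invariance}, while for $t=0$ it follows from $\varphi_0(x,v)=(x,v)$, $\varphi_0(x,v')=(x,v')$ together with the reflection-symmetry of $O\cap(\partial M\setminus\partial_\mathrm{g}M)$. So the whole content is the \textbf{openness} of $\varphi^{-1}(O)$, and this is what I will sketch.

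Fix $(t_0,x_0,v_0)\in\varphi^{-1}(O)$ and set $q_0:=\varphi_{t_0}(x_0,v_0)\in O$. The evident time-reversal symmetry $(t,x,v)\mapsto(-t,x,-v)$ conjugates $\varphi_t$ to $\varphi_{-t}$, preserves $\partial M\setminus\partial_\mathrm{g}M$, and commutes with the tangential reflection $v\mapsto v'$, hence sends admissible sets $O$ to admissible sets; applying it we may assume $t_0\geq 0$. Because $(t_0,x_0,v_0)\in D$ and the sequences $t^n_\pm(x_0,v_0)$ are monotone with $t_0<T_\mathrm{max}(x_0,v_0)$, the trajectory arc $[0,t_0]\ni s\mapsto\varphi_s(x_0,v_0)$ meets $\partial M$ at only finitely many times; let $k\in\N_0$ be their number. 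The plan is to prove that $\varphi^{-1}(O)$ is a neighbourhood of $(t_0,x_0,v_0)$ by induction on $k$. If $k=0$, the closed arc lies in the open set $\mathring M$ and is a geodesic arc, so a standard tube-lemma argument from the continuity of $\varphi^g$ and the openness of $O\cap\mathring M$ and $\mathring M$ produces the desired neighbourhood.

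For $k\geq 1$ the idea is to \emph{peel off one collision}. Let $p_1=\varphi_{s_1}(x_0,v_0)\in\partial M\setminus\partial_\mathrm{g}M$ be the first collision. Except in the configuration $t_0=0$ with $(x_0,v_0)\in\partial M$, the arc contains a nondegenerate geodesic sub-arc in $\mathring M$ on one side of $p_1$, and I would argue as follows. By Lemma~\ref{lem:convexity}(iii)--(v), $\varphi|_{N_\mathrm{in}}$ (resp.\ $\varphi|_{N_\mathrm{out}}$) is an injective open map possessing a smooth, in particular continuous, inverse $h$ defined on an open subset $\varphi(N_\mathrm{in/out})\subset M\setminus\partial_\mathrm{g}M$ which is a neighbourhood of $p_1$. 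Composing $h$ with the geodesic flow, with the tangential reflection where appropriate, and with one more piece of geodesic flow — all continuous maps — and invoking the flow property \eqref{eq:flowprop} repeatedly, which is legitimate because $t_0>0$ keeps all the intermediate flow-times away from $0$, where alone \eqref{eq:flowprop} could fail, one obtains a continuous map $G$ on a neighbourhood of $(t_0,x_0,v_0)$ in $\R\times(M\setminus\partial_\mathrm{g}M)$ with $\varphi_t(x,v)=\varphi\big(G(t,x,v)\big)$ there and with $G(t_0,x_0,v_0)$ a point whose trajectory arc has only $k-1$ collisions. The inductive hypothesis then gives an open set $W'\ni G(t_0,x_0,v_0)$ with $\varphi(W')\subset O$, and $W:=G^{-1}(W')$ intersected with the neighbourhood on which $\varphi=\varphi\circ G$ is the required open neighbourhood of $(t_0,x_0,v_0)$. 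Carrying this out requires keeping track of the $\partial_\mathrm{in}/\partial_\mathrm{out}$ and forward/backward cases and using transversality of $\varphi^g$ to $\partial M\setminus\partial_\mathrm{g}M$ to see that the relevant first-hit times are continuous and bounded below near $p_1$, but these are routine.

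The remaining, genuinely delicate configuration is $t_0=0$ with $(x_0,v_0)\in\partial M\setminus\partial_\mathrm{g}M$ (together with the closely related cases in which an endpoint of the arc lies on $\partial M$, e.g.\ $q_0\in\partial M$), where no collision can be peeled off. Here I would argue by a direct continuity estimate: for $(x,v)$ near $(x_0,v_0)$ and small $|t|$, the point $\varphi_t(x,v)$ is — according to whether the relevant reflection has already taken place — close either to $(x_0,v_0)$ or to its tangential reflection $(x_0,v_0')$; and since $(x_0,v_0)\in O$, the reflection-symmetry of $O\cap(\partial M\setminus\partial_\mathrm{g}M)$ forces $(x_0,v_0')\in O$ as well, so $\varphi_t(x,v)\in O$ on a small enough neighbourhood. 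Conceptually this is exactly where the reflection-symmetry hypothesis is indispensable: it says precisely that $O$ is saturated for the identification $(x,v)\sim(x,v')$ along the non-grazing boundary, and modulo that identification $\varphi$ is continuous. I expect the main obstacle to be this bookkeeping of signs and of the short-arc/endpoint cases rather than any single hard estimate; all the genuine analytic input is already packaged in Lemma~\ref{lem:convexity} and in the transversality of the geodesic flow to $\partial M\setminus\partial_\mathrm{g}M$.
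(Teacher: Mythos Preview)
Your proposal is correct and follows essentially the same approach as the paper's proof: both argue by induction on the number of boundary collisions along the trajectory arc, use continuity of $\varphi^g$ and a tube-lemma argument for the collision-free base case, and invoke Lemma~\ref{lem:convexity} (the local smooth inverse $\varphi|_{N_\mathrm{in/out}}^{-1}$) to handle a single collision, with the reflection-symmetry of $O$ entering precisely at boundary points. The only differences are organisational: the paper treats the atomic case ``segment with one endpoint on $\partial M$'' explicitly and then decomposes a general arc into such pieces, whereas you peel off the first collision via a continuous reparametrisation map $G$; and you use the time-reversal symmetry to reduce to $t_0\geq 0$ while the paper simply says ``analogously'' for $t<0$.
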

The proof of Lemma \ref{lem:D} is given in Appendix \ref{sec:proof2}. 

Having shown in \Cref{lem:D} that $D$ is open in $\R\times (\partial M\setminus \partial_\mathrm{g} M)$ and recalling from \eqref{eq:D} that for any $(x,v)\in M\setminus \partial_\mathrm{g} M$ the set $\{t\in\R \,|\, (t, x,v)\in D\}$ is an open interval containing $0$, we see that $D$ is an honest \emph{flow domain}.

Finally, let us mention without detailing the proof  that using similar arguments as in the proof of \Cref{lem:D} one can show that the flow $\varphi$ is smooth on the set $\varphi^{-1}(\mathring M)$. The latter is open in $\R\times (M\setminus \partial_\mathrm{g} M)$ by \Cref{lem:D}.

\subsubsection{Trapped set and hyperbolicity} \label{section_trapped}

We define the \emph{trapped set} of $\varphi$ as those points for which the flow is globally defined and the trajectory remains within a compact region, i.e.:
\begin{equation} \label{eq:K}
\begin{split}
K \defgr \big\{(x, v)\in M\setminus \partial_\mathrm{g} M \,|\, &\R\times\{(x,v)\} \subset D,\\
&\exists ~\text{compact}~ W\subset M\setminus \partial_\mathrm{g} M ~\text{with}~ \varphi(\R\times\{(x, v)\})\subset W \big\}.
\end{split}
\end{equation}
An illustration of $K$ in a $2$-dimensional Euclidean setting can be found in Figure \ref{fig04}.
\begin{figure}[h]
\centering
\includegraphics[width=0.75\textwidth, trim={0cm 0cm 0cm 0cm}]{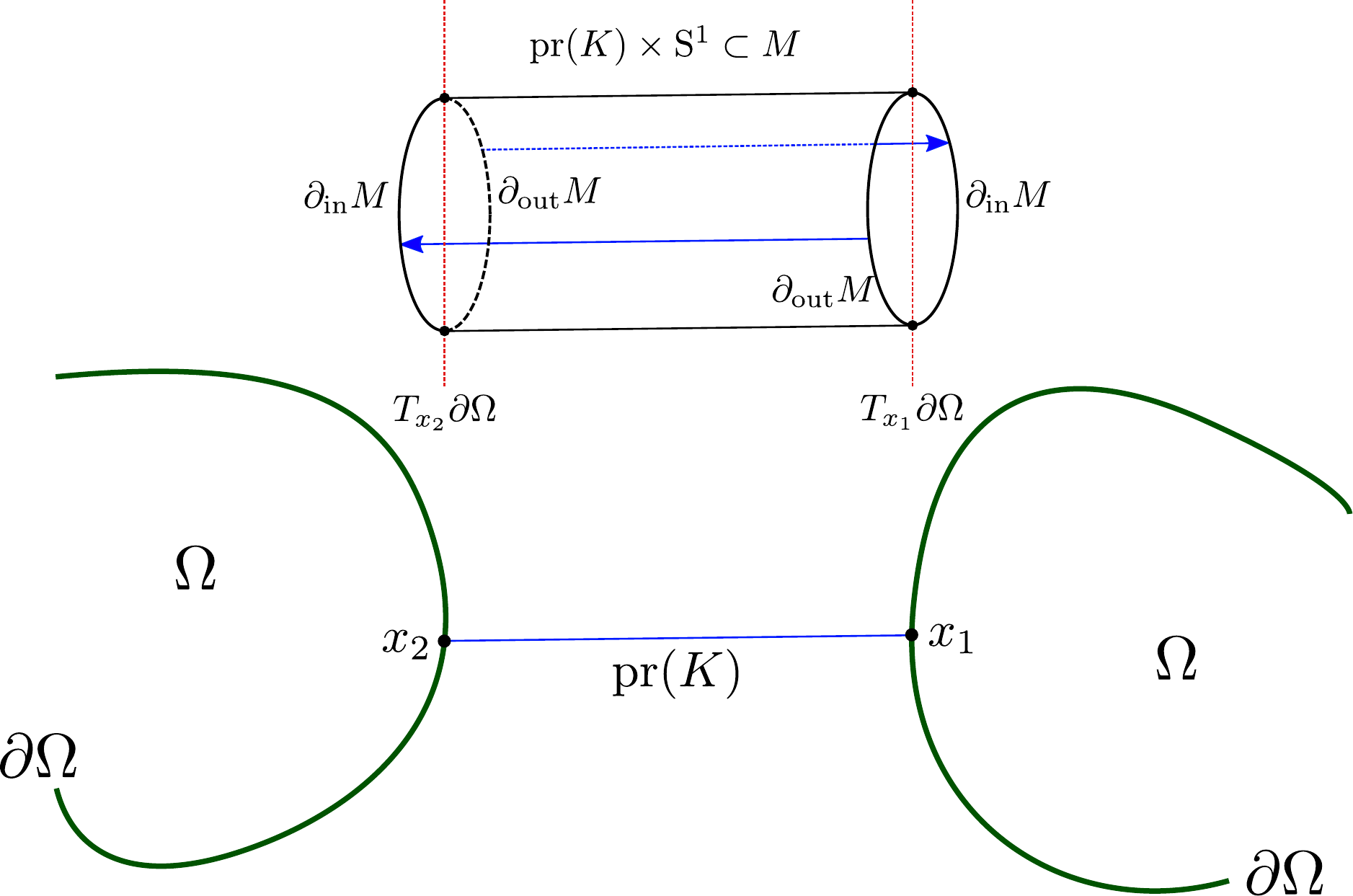}
\caption{Illustration of the trapped set $K$ in an example where $\Sigma=\R^2$ is the Euclidean plane and $\mathrm{pr}(K)$ consists of a single closed ``bouncing'' trajectory of the spatial billiard flow. }
\label{fig04}
\end{figure}
\begin{lem}
$K\cap (\partial M\setminus \partial_\mathrm{g} M)$ is  reflection-symmetric in the sense of \Cref{def:symm}.
\end{lem}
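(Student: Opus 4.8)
The plan is to reduce the claim to the reflection invariance of the non-grazing billiard flow $\varphi$ established in \eqref{eq:invariance}, together with the reflection symmetry of the flow domain $D$ recorded in \Cref{lem:D}. Let $(x,v)\in K\cap(\partial M\setminus \partial_\mathrm{g} M)$; we must show that its tangential reflection $(x,v')$ also lies in $K\cap(\partial M\setminus \partial_\mathrm{g} M)$. Since the tangential reflection preserves $\partial M\setminus \partial_\mathrm{g} M$ (it fixes $\partial_\mathrm{g} M$ pointwise and interchanges $\partial_\mathrm{in} M$ with $\partial_\mathrm{out} M$, as noted just before \eqref{eq:tpm}), the point $(x,v')$ automatically lies in $\partial M\setminus \partial_\mathrm{g} M$; the content is that it lies in $K$.

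First I would check that $\R\times\{(x,v')\}\subset D$. By the definition \eqref{eq:K} of $K$ we know $\R\times\{(x,v)\}\subset D$, i.e.\ $(t,x,v)\in D$ for every $t\in\R$. For $t=0$ we have $(0,x,v')\in D$ since $\{0\}\times (M\setminus\partial_\mathrm{g} M)\subset D$ always. For $t\neq 0$, the reflection invariance of the domain $D$ — precisely the equivalence $(t,x,v)\in D \iff (t,x,v')\in D$ for $(t,x,v)\in \R\times(\partial M\setminus\partial_\mathrm{g} M)$, which is the special case $O = M\setminus\partial_\mathrm{g} M$ of \eqref{eq:Dinv} in \Cref{lem:D} — gives $(t,x,v')\in D$. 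Hence $\R\times\{(x,v')\}\subset D$.

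Next I would produce the compact set. By \eqref{eq:K} there is a compact $W\subset M\setminus\partial_\mathrm{g} M$ with $\varphi(\R\times\{(x,v)\})\subset W$. Let $r:\partial M\to\partial M$, $(y,w)\mapsto (y,w')$, denote the tangential reflection; it is a smooth involution of $\partial M$ preserving $\partial M\setminus\partial_\mathrm{g} M$. Consider $W' \defgr W\cup r\big(W\cap(\partial M\setminus\partial_\mathrm{g} M)\big)$. Since $r$ is continuous and $W\cap(\partial M\setminus\partial_\mathrm{g} M)$ is a closed subset of the compact set $W$, hence compact, its image under $r$ is compact, so $W'$ is compact; and $W'\subset M\setminus\partial_\mathrm{g} M$ because $r$ maps $\partial M\setminus\partial_\mathrm{g} M$ into itself. (One could even take the slightly larger but cleaner choice $W' = W\cup r(W\cap\partial M)$ after noting $r(\partial_\mathrm{g} M)=\partial_\mathrm{g} M$, but the intersection with $\partial M\setminus\partial_\mathrm{g} M$ suffices.) Now I claim $\varphi(\R\times\{(x,v')\})\subset W'$. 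For $t=0$, $\varphi_0(x,v') = (x,v')\in M\setminus\partial_\mathrm{g} M$, and since $\varphi_0(x,v) = (x,v)\in W$ with $(x,v)\in\partial M\setminus\partial_\mathrm{g} M$ we get $(x,v') = r(x,v)\in r(W\cap(\partial M\setminus\partial_\mathrm{g} M))\subset W'$. For $t\neq 0$, the reflection invariance \eqref{eq:invariance} of $\varphi$ — applicable because $(t,x,v)\in D\cap((\R\setminus\{0\})\times\partial M)$ — yields $\varphi_t(x,v') = \varphi_t(x,v)\in W\subset W'$. Thus every point of the trajectory through $(x,v')$ lies in the compact set $W'\subset M\setminus\partial_\mathrm{g} M$, so $(x,v')\in K$ by \eqref{eq:K}.

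The argument is essentially formal once \eqref{eq:invariance} and \eqref{eq:Dinv} are in hand, so there is no real obstacle; the only point requiring a little care is the handling of the time $t=0$, where $\varphi$ does \emph{not} satisfy the reflection identity (indeed $\varphi_0(x,v)=(x,v)\neq(x,v')=\varphi_0(x,v')$), and the trajectory value there must be tracked separately and absorbed into $W'$ by enlarging $W$ by its reflection. Everything else — compactness of $r(W\cap(\partial M\setminus\partial_\mathrm{g} M))$, stability of $M\setminus\partial_\mathrm{g} M$ under $r$, the case split $t=0$ versus $t\neq 0$ — is routine.
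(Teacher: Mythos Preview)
Your proof is correct and follows essentially the same approach as the paper's, which simply observes that by \eqref{eq:invariance} the trajectory $\varphi(\R\times\{(x,v')\})$ is well-defined and coincides with $\varphi(\R\times\{(x,v)\})$ except at $t=0$. Note that your appeal to \eqref{eq:Dinv} for the domain part is actually redundant: the statement $(t,x,v')\in D$ is already built into \eqref{eq:invariance} itself, so the paper gets away with citing only that equation; your explicit $W'$ construction for the $t=0$ value is a nice bit of extra care that the paper leaves implicit.
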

\begin{proof}
This follows immediately from the fact that for every $(x,v)\in K\cap (\partial M\setminus \partial_\mathrm{g} M)$ the trajectory $\varphi(\R\times \{(x,v')\})$ is well-defined and coincides  with $\varphi(\R\times \{(x,v)\})$ except at $t=0$, as follows from \eqref{eq:invariance}. 
\end{proof}
\begin{remark}
For the proof of our main results  (Theorem~\ref{thm:resolvent}, Corollary~\ref{cor:Pmeromorphic}) we will assume that  the trapped set $K$ is compact. Note that this assumption is a non-trivial condition on the global geometry of the obstacles because we work with the non-grazing dynamics. The compactness of the trapped set implies that all trapped trajectories are located at a strictly positive distance from the grazing trajectories. 
\end{remark}
\begin{lem}\label{lem:compact_trapped}
 If in the $n$-dimensional Euclidean convex obstacle scattering setup considered in the introduction the obstacles $\Omega=\cup_{i=1}^N\Omega_i\subset \R^n$ fulfill the no-grazing condition  (thus in particular if they fulfill the no-eclipse condition), then the trapped set $K$ of the non-grazing billiard flow, defined in \eqref{eq:K}, agrees with the trapped set $K^b$ of the complete billiard flow defined in \eqref{eq:Kb}. In particular, $K$ is compact.
\end{lem}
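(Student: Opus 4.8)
The plan is to show $K=K^b$ by a double inclusion and then to read off compactness. The inclusion $K\subseteq K^b$ does not even require the no-grazing condition: if $(x,v)\in K$, then $\R\times\{(x,v)\}\subset D$ and there is a compact $W\subset M\setminus\partial_\mathrm{g}M$ with $\varphi(\R\times\{(x,v)\})\subset W$. Since $W$ is disjoint from $\partial_\mathrm{g}M=S\partial\Omega$, the $\varphi$-orbit of $(x,v)$ never grazes, so each of its boundary intersections is non-grazing; comparing the definition of $\varphi$ in Section~\ref{sec:billiard} with that of the complete billiard flow $\varphi^b$ from the introduction shows that along such an orbit the two flows coincide. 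Hence $\varphi^b_t(x,v)$ is defined for all $t$ and stays in the bounded set $W$, i.e.\ $(x,v)\in K^b$.

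For the converse inclusion $K^b\subseteq K$ the no-grazing condition is used. Let $(x,v)\in K^b$. As recalled just after~\eqref{eq:Kb}, under the no-grazing condition $K^b$ contains no grazing trajectory, so the $\varphi^b$-orbit of $(x,v)$ never meets $\partial_\mathrm{g}M$; consequently every boundary intersection of this orbit is non-grazing, the non-grazing flow through $(x,v)$ is globally defined (that is, $\R\times\{(x,v)\}\subset D$), and $\varphi_t(x,v)=\varphi^b_t(x,v)$ for all $t$, up to the harmless reflection ambiguity at $\partial M$ from \eqref{eq:invariance}. It then remains to exhibit a compact set $W\subset M\setminus\partial_\mathrm{g}M$ containing this orbit. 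For this I would invoke that $K^b$ is compact (see below): being $\varphi^b$-invariant, closed, and disjoint from the closed set $\partial_\mathrm{g}M$, the trapped set $K^b$ lies at some positive distance $\delta>0$ from $\partial_\mathrm{g}M$, so taking for $W$ the intersection of the closed $\tfrac{\delta}{2}$-neighbourhood of $K^b$ with a sufficiently large closed ball in $M$ gives a compact subset of $M\setminus\partial_\mathrm{g}M$ containing $K^b$, and the orbit of $(x,v)$ stays inside $K^b$. Thus $(x,v)\in K$, and combined with the first step this yields $K=K^b$, which is then compact as well.

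It remains to justify that $K^b$ is compact. Boundedness is elementary: a billiard trajectory among the compact obstacles $\Omega_1,\dots,\Omega_N$ that stays in a bounded set must reflect infinitely often in both forward and backward time (otherwise it is eventually a straight ray and escapes to infinity), hence its spatial projection is a bi-infinite concatenation of straight segments each joining two points of $\partial\Omega$, and by convexity every such segment lies in the compact convex hull $\mathrm{conv}(\Omega)$; therefore $\mathrm{pr}(K^b)\subset\mathrm{conv}(\Omega)\setminus\mathring{\Omega}$ and $K^b$ is bounded in $M$. For closedness one has to show that the set of points whose forward (resp.\ backward) $\varphi^b$-orbit escapes to infinity is open; away from grazing trajectories the flow $\varphi^b$ is continuous, strict convexity of the obstacles makes escaping behaviour stable under perturbations of non-grazing escaping orbits, and the no-grazing condition (which confines grazing trajectories to finitely many reflections in one time direction) is what prevents escaping from being destroyed precisely at grazing configurations. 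This last point — equivalently, that trapped orbits stay at a uniform distance from $\partial_\mathrm{g}M$ — is the one delicate step of the proof, since it is where the discontinuity of $\varphi^b$ at grazing trajectories has to be reconciled with the no-grazing hypothesis; everything else is bookkeeping with the definitions of $K$ and $K^b$. Alternatively, compactness of the trapped set for scattering by strictly convex obstacles may be taken as a classical fact.
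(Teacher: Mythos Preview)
Your proof is correct and follows essentially the same double-inclusion approach as the paper: $K\subset K^b$ because $\varphi$ is the restriction of $\varphi^b$ up to the first grazing collision, and $K^b\subset K$ because the no-grazing condition forces all trajectories in $K^b$ to be non-grazing; compactness of $K^b$ is then invoked as a classical fact (the paper simply cites the literature for this rather than sketching an argument as you do). One small simplification: in your $K^b\subseteq K$ step you construct $W$ as a neighbourhood of $K^b$, but since $K^b$ itself is compact, $\varphi^b$-invariant, and disjoint from $\partial_\mathrm{g}M$, you may take $W=K^b$ directly.
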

\begin{proof}
As our non-grazing billiard flow $\varphi$ is a restriction of the full billiard flow $\varphi^b$ up to the first grazing collision one clearly has $K\subset K^b$. Now the no-grazing condition implies that none of the trajectories in $K^b$ experiences a grazing collision which lets us infer the reverse inclusion $K^b\subset K$.  Finally, in view of the compactness of $\cup_{i=1}^N\Omega_i$, it is a well-known fact that $K^b$ is compact, see \cite[Section 1.3]{Florio} and the references given therein.
\end{proof}
\begin{remark}
 If the obstacles $\cup_{i=1}^N\Omega_i$ are not assumed to satisfy the no-grazing condition it is easy to construct examples that have a non-compact trapped set $K$ (but nevertheless a compact trapped set $K^b$ for the complete billiard flow $\varphi^b$), see Figure~\ref{fig:exmpl_noncpt}.
\end{remark}

\begin{figure}[h]
\centering
\includegraphics[width=0.5\textwidth, trim={0cm 0cm 0cm 0cm}, clip]{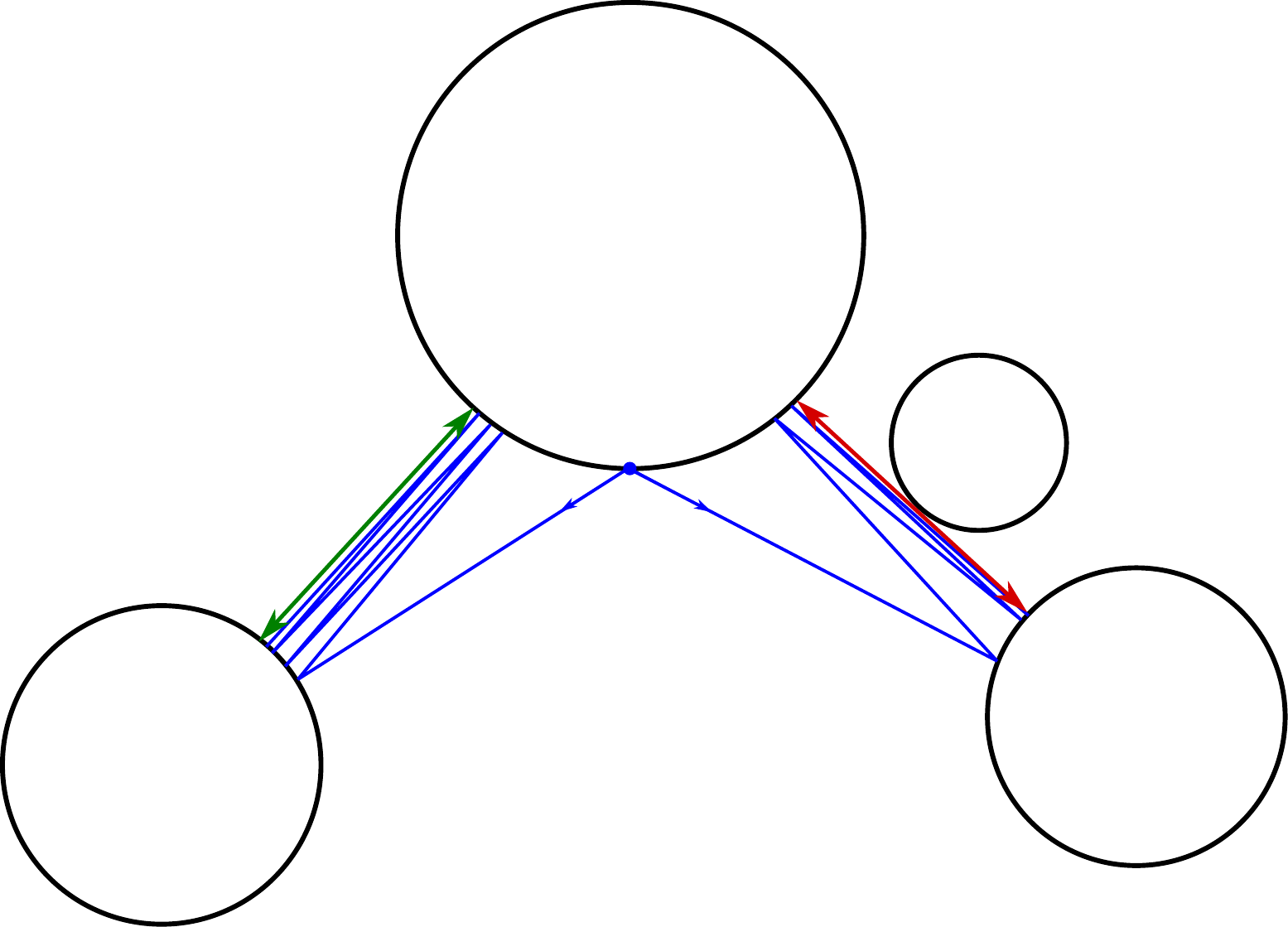}
\caption{The red trajectory belongs to the spatial projection of the trapped set $K^b$ of the complete billiard flow, but not to the spatial projection of $K$ as it contains a grazing collision. The green and blue trajectories, however, belong to the spatial projections of $K^b$ as well as  $K$. Note that the red trajectory clearly lies in the closure of the blue trajectory in $\Sigma\setminus \mathring \Omega$ and the latter touches the former tangentially, thus $K$ is not closed in $M\setminus \partial_\mathrm{g}M$ and hence not compact.}
\label{fig:exmpl_noncpt}
\end{figure}

Finally we introduce a notion of \emph{hyperbolicity} for the billiard dynamics.
\begin{defn} \label{def:hypbilliard}
The non-grazing billiard flow $\varphi$ is called \emph{hyperbolic on its trapped set $K$} if the following holds: For any $(x, v)\in K\cap \mathring{M}$ the tangent bundle exhibits a continuous splitting
\begin{equation} \label{eq:hyperbolic_splitting}
T_{(x, v)} M = \mathbb{R}\cdot X(x, v) \oplus E_s(x, v) \oplus E_u(x, v),
\end{equation}
where $X(x, v)$ denotes the flow direction at $(x, v)$, $E_{s/u}(x, v)$ is mapped onto $E_{s/u}(\varphi_t(x, v))$ under the differential of $\varphi_t$ whenever $\varphi_t(x, v)\in K\cap \mathring{M}$, and there exist constants $C_0, C_1 > 0$ such that
\begin{equation} \label{eq:def_hyperbolicity}
\begin{split}
\Arrowvert \mathrm{d}\varphi_t(x, v) W\Arrowvert_{\varphi_t(x, v)} &\leq C_0 \exp(-C_1 t) \Arrowvert W\Arrowvert_{(x, v)}, \quad t\geq 0,~ \varphi_t(x, v)\in K\cap \mathring{M},~ W\in E_s(x, v) \\
\Arrowvert \mathrm{d}\varphi_t(x, v) W\Arrowvert_{\varphi_t(x, v)} &\geq C_0^{-1} \exp(C_1 t) \Arrowvert W\Arrowvert_{(x, v)}, \quad t\geq 0,~ \varphi_t(x, v)\in K\cap \mathring{M},~ W\in E_u(x, v) ,
\end{split}
\end{equation}
where $\Arrowvert \cdot\Arrowvert$ denotes any continuous norm on the tangent bundle $TM$.
\end{defn}

\begin{remark} \label{remark:hyp1}
Let $\Omega\subset \R^n$ be the disjoint union of finitely many compact, connected, strictly convex sets with smooth boundaries and take on $\R^n$ the Euclidean metric. Then \cite[Chapter~4.4]{Chernov.2006} and \cite[Appendix]{ChaubetPetkovBilliards} show that the associated non-grazing billiard is hyperbolic on its trapped set. See also \cite[Section~5.2]{dyatlov2018notes} for an introductory exposition of hyperbolicity of dispersing billiards.  
\end{remark}

\subsection{Reflection-invariance of the canonical contact structure}\label{sec:contact1}

The sphere bundle $S\Sigma$ carries a canonical contact form $\alpha$ corresponding to the Liouville form (the tautological $1$-form) on the co-sphere bundle $S^\ast\Sigma$ under the diffeomorphism $S\Sigma\cong S^\ast\Sigma$ provided by the Riemannian metric $g$, i.e., $\alpha_{(x,v)}(w) \defgr g_x(v, \mathrm{d}\pi_{(x, v)} w)$ for the projection $\pi(x, v) = x$. The restriction of $\alpha$ to the submanifold $\partial M = (S\Sigma)|_{\partial\Omega} \subset S\Sigma$  can be pulled back along the tangential reflection map  $R:\partial M\to \partial M$, $(x,v)\mapsto (x,v')$. It turns out that $\alpha|_{\partial M}$ is invariant under this pullback, a property we will need later on:
\begin{lem}\label{lem:alphainv}
One has the equality of $1$-forms $R^\ast (\alpha|_{\partial M})=\alpha|_{\partial M}$.
\end{lem}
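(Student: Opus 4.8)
The plan is to work in local coordinates adapted to $\partial\Omega$. Fix $x\in\partial\Omega$ and choose a chart $(x^1,\dots,x^n)$ on $\Sigma$ around $x$ in which $\partial\Omega = \{x^n = 0\}$; by rescaling we may even assume the metric satisfies $g_{in}(x',0) = \delta_{in}$ along $\partial\Omega$, i.e.\ $\partial_n$ is the unit normal on $\partial\Omega$, so that the inward unit normal is $n_x = \partial_n$ and $T_x\partial\Omega = \mathrm{span}(\partial_1,\dots,\partial_{n-1})$. Writing a point of $S\Sigma$ near $\partial M$ as $(x,v)$ with $v = \sum_i v^i\partial_i$, the reflection map restricted to $\partial M$ is $R(x',0,v) = (x',0,v')$ with $v' = v - 2g_x(v,n_x)n_x$; in these coordinates this just negates the normal component, $v'^i = v^i$ for $i<n$ and $v'^n = -v^n$, while $v'^\perp := (v^1,\dots,v^{n-1})$ is unchanged. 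The key point to extract is that the tangential part $\mathrm{d}\pi_{(x,v)}w$ of any $w\in T_{(x,v)}\partial M$ is automatically tangent to $\partial\Omega$, hence its normal component vanishes: $\mathrm{d}\pi(w) \in T_x\partial\Omega$.

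From here the computation is short. For $w\in T_{(x,v)}\partial M$ we have $\alpha_{(x,v)}(w) = g_x(v,\mathrm{d}\pi_{(x,v)}w)$, and since $\mathrm{d}\pi(w)$ has no $\partial_n$-component, only the tangential part of $v$ contributes: $\alpha_{(x,v)}(w) = g_x(v^\perp, \mathrm{d}\pi(w))$ where $v^\perp$ denotes the orthogonal projection of $v$ onto $T_x\partial\Omega$ (here I use that $g_x$ along $\partial\Omega$ splits orthogonally between normal and tangential directions by the normalization above). On the other hand $(R^\ast(\alpha|_{\partial M}))_{(x,v)}(w) = \alpha_{R(x,v)}(\mathrm{d}R_{(x,v)}w) = g_x(v', \mathrm{d}\pi_{(x,v')}(\mathrm{d}R_{(x,v)}w))$. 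Now $\pi\circ R = \pi$ on $\partial M$ (reflection does not move the base point), so $\mathrm{d}\pi_{(x,v')}\circ\mathrm{d}R_{(x,v)} = \mathrm{d}\pi_{(x,v)}$, and again only the tangential part of $v'$ matters; but the tangential part of $v'$ equals that of $v$ since reflection only flips the normal component. Hence $(R^\ast(\alpha|_{\partial M}))_{(x,v)}(w) = g_x(v'^\perp,\mathrm{d}\pi(w)) = g_x(v^\perp,\mathrm{d}\pi(w)) = \alpha_{(x,v)}(w)$, which is the claimed identity.

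I would phrase the actual proof coordinate-free wherever convenient: the two genuinely used facts are (i) $\pi\circ R = \pi$, giving $\mathrm{d}\pi\circ\mathrm{d}R = \mathrm{d}\pi$ on $T\partial M$; (ii) for $w\in T\partial M$, the vector $\mathrm{d}\pi(w)$ lies in $T\partial\Omega$, on which $g_x$ pairs $v$ and $v'$ identically because $v-v' = 2g_x(v,n_x)n_x$ is normal to $\partial\Omega$, i.e.\ $g_x(v,\xi) = g_x(v',\xi)$ for all $\xi\in T_x\partial\Omega$. Combining, $R^\ast(\alpha|_{\partial M})_{(x,v)}(w) = g_x(v',\mathrm{d}\pi(w)) = g_x(v,\mathrm{d}\pi(w)) = \alpha_{(x,v)}(w)$ for every $w\in T_{(x,v)}\partial M$ and every $(x,v)\in\partial M$. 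The one mild subtlety — the ``main obstacle'', though it is minor — is making sure the derivative $\mathrm{d}R$ is handled correctly: $R$ is a smooth involution of $\partial M$ covering the identity on $\partial\Omega$, so its differential restricts to an isomorphism of each $T_{(x,v)}\partial M$ and the chain rule $\mathrm{d}(\pi|_{\partial M}\circ R) = \mathrm{d}(\pi|_{\partial M})$ is legitimate; once that is in place the identity $g_x(v,\xi)=g_x(v',\xi)$ for $\xi\in T_x\partial\Omega$ does the rest with no further analysis needed.
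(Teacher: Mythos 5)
Your coordinate-free version at the end is exactly the paper's proof: one uses $\pi\circ R = \pi$ to reduce $R^\ast\alpha|_{\partial M}$ to $g_x(v',\mathrm{d}\pi(w))$, and then the fact that $v - v' = 2g_x(v,n_x)n_x$ is normal to $\partial\Omega$ while $\mathrm{d}\pi(w)\in T_x\partial\Omega$ kills the difference. The preliminary coordinate discussion is a harmless (if slightly loosely worded — ``by rescaling'' doesn't really produce $g_{in}=\delta_{in}$; boundary normal coordinates do) warm-up, but it is not needed once you note $n_x\perp T_x\partial\Omega$, and your final paragraph already dispenses with it.
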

\begin{proof}
Let $\pi: \partial M = (S\Sigma)|_{\partial\Omega} \rightarrow \partial\Omega$ be the bundle projection $(x,v)\mapsto x$. Then we compute for $(x,v)\in \partial M$, $w\in T_{(x,v)}(S\Sigma|_{\partial\Omega})$ using the formula $v'=v - 2g_x(v, n_x) n_x$ featuring the inward normal vector $n_x\perp T_x (\partial\Omega)$:
\begin{align*}
\alpha_{(x,v)}(w)&=g_x(v, \mathrm{d}\pi_{(x, v)} (w)),\\
R^\ast (\alpha|_{\partial M})_{(x,v)}(w) &= g_x(v',\mathrm{d}(\pi\circ R)_{(x, v)}(w))\\
&= g_x(v - 2g_x(v, n_x) n_x, \mathrm{d}\pi_{(x, v)} (w))\\
&= \alpha_{(x,v)}(w) - 2 g_x(v, n_x) \underbrace{g_x(n_x, \mathrm{d}\pi_{(x, v)} (w))}_{=0}.
\end{align*}
Here we used the facts that $\pi\circ R=\pi$ and $\mathrm{d}\pi_{(x, v)}(w)\in T_x (\partial\Omega)$.
\end{proof}

\subsection{Billiard functions and the billiard generator}\label{sec:CBillP}

Although the non-grazing billiard flow $\varphi:D\to M\setminus \partial_\mathrm{g} M$ is not continuous and partially violates the flow property (recall Remark \ref{rem:notflow}), and therefore does not possess a generating vector field, we can associate with $\varphi$ natural function spaces as well as an operator providing a replacement for the generator of $\varphi$. 

Indeed, recall from \Cref{lem:D}  that $D$ is open in $\R\times (M\setminus \partial_\mathrm{g} M)$ and that more generally  for every open set $O\subset M\setminus \partial_\mathrm{g} M$ such that $O\cap (\partial M\setminus \partial_\mathrm{g} M)$ is reflection-symmetric  the inverse image $\varphi^{-1}(O)\subset D$ is open. Fix such a set $O$. Then we define the vector space of (compactly supported) \emph{smooth billiard functions} on $O$ by
\bq
\mathrm{C}^\infty_\mathrm{Bill}(O) \defgr \{f\in \mathrm{C}^\infty(O)\,|\, f\circ \varphi\in \mathrm{C}^\infty(\varphi^{-1}(O))\},\quad \mathrm{C}^\infty_{\mathrm{Bill},\mathrm{c}}(O) \defgr\mathrm{C}^\infty_{\mathrm{Bill}}(O)\cap  \mathrm{C}^\infty_\mathrm{c}(O). \label{eq:CBill}
\eq

These spaces are non-trivial: The smoothness of $\varphi$ on $\varphi^{-1}(\mathring M)$ implies that there is an injection $\mathrm{C}^\infty_\mathrm{c}(O\cap\mathring M)\hookrightarrow  \mathrm{C}^\infty_{\mathrm{Bill},\mathrm{c}}(O)$. We consider these spaces as interesting because they provide natural domains for the following differential operator
\begin{align}
\begin{split}
\mathbf{P}: \mathrm{C}^\infty_{\mathrm{Bill}}(O)&\longrightarrow \mathrm{C}^\infty_{\mathrm{Bill}}(O),\\
(\mathbf{P}f)(x,v) &\defgr \frac{\mathrm{d}}{\mathrm{d} t}\bigg|_{t = 0} f\circ \varphi_t(x,v),\qquad (x,v) \in O .
\label{eq:P}
\end{split}
\end{align}
The operator $\mathbf{P}$ is well-defined, preserves $\mathrm{C}^\infty_{\mathrm{Bill},\mathrm{c}}(O)$ and will henceforth be called the \emph{billiard generator}. This name is justified by \eqref{eq:P} which makes $\mathbf{P}$ a formal generator of the flow $\varphi$, where the discontinuity and the partial violation of the flow property of the latter are dealt with by passing to the function space $\mathrm{C}^\infty_{\mathrm{Bill}}(O)$. On the subspace $\mathrm{C}^\infty_\mathrm{c}(O\cap\mathring M)\subset \mathrm{C}^\infty_{\mathrm{Bill},\mathrm{c}}(O)$ the operator $\mathbf{P}$ simply acts as the geodesic vector field. In fact, if $X^g: \mathrm{C}^\infty(O)\to \mathrm{C}^\infty(O)$ denotes the generator of the geodesic flow $\varphi^g$ on the set $O$, then for every function $f\in \mathrm{C}^\infty_{\mathrm{Bill}}(O)$ the smooth function $\mathbf{P}f$ agrees with $X^g f$ on $O\cap\mathring M$ which is dense in $O$, so it follows that $\mathbf{P}f=X^gf$ on all of $O$. This shows that the billiard generator $\mathbf{P}$ is nothing but the restriction of $X^g$ to the domain $\mathrm{C}^\infty_{\mathrm{Bill}}(O)$:
\bq
\mathbf{P}=X^g|_{\mathrm{C}^\infty_{\mathrm{Bill}}(O)}.\label{eq:restrictionP}
\eq
In particular, we see that $X^g$ preserves $\mathrm{C}^\infty_{\mathrm{Bill}}(O)$ and $\mathrm{C}^\infty_{\mathrm{Bill}, \mathrm{c}}(O)$. 

Note that we can easily extend the construction \eqref{eq:CBill} to define analogous spaces of continuous billiard functions $\mathrm{C}_{\mathrm{Bill}}(O)$ and billiard functions of limited regularity $\mathrm{C}^N_{\mathrm{Bill}}(O)$.

We refrain from introducing topologies on the spaces $\mathrm{C}^\infty_{\mathrm{Bill}}(O)$, e.g. the subspace topologies induced by $\mathrm{C}^\infty(O)$, and $\mathrm{C}^\infty_{\mathrm{Bill},\mathrm{c}}(O)$ to avoid further technicalities. Instead we will introduce \emph{smooth models} for the billiard flow which will allow us to work with ordinary smooth functions and distributions on a smooth manifold as well as a smooth vector field $\mathbf{X}$ instead of the above defined operator $\mathbf{P}$.

\section{Smooth models for the non-grazing billiard flow}\label{sec:model} The fact that the non-grazing billiard flow is not continuous is highly inconvenient.  However, we shall see that a smooth model for $\varphi$ exists and is unique in a strong sense, so that we can consider it as intrinsic to the geodesic billiard system $(\Sigma, g, \Omega)$. The literature on billiards often presupposes a smooth model or works on $\varphi^{-1}(\mathring M)$ to begin with, see e.g.\ \cite{Chernov.2006}. Here we give a definition of smooth models in a slightly more general geometric setting and perform an explicit construction to show the existence of such a model. While smooth models no longer carry the bundle structure of $S\Sigma$, the smooth model flows remain \emph{contact flows}. This  constitutes a very convenient technical feature which is often implicit in concrete coordinate calculations in the billiard literature.


\begin{defn}\label{def:smoothmodel}A \emph{smooth model} for the non-grazing  billiard flow $\varphi:D\to M$ is a triple $(\M, \pi, \phi)$ consisting of a smooth manifold $\M$, a smooth surjection $\pi: M\setminus \partial_\mathrm{g}M\to \mathcal{M}$ such that $\mathcal D\defgr (\mathrm{id}_\R\times \pi)(D) \subset \R\times \mathcal M$ is open, and a smooth flow $\phi:\mathcal D\to \M$ such that
\begin{enumerate}
\item[i)] The restriction $\pi|_{\mathring M}$ is a diffeomorphism onto its image.
\item[ii)] The flows $\varphi$ and $\phi$ are intertwined by $\pi$:
		\bq
		\phi\circ (\mathrm{id}_\R\times \pi)|_D=\pi\circ\varphi.\label{eq:flowcompat}
		\eq
\end{enumerate}
\end{defn}
We emphasize that in the above definition $\phi$ must be a flow in the usual sense -- the exceptional generalized terminology introduced in Remark \ref{rem:notflow} only applies to $\varphi$. 

Definition \ref{def:smoothmodel} is motivated by the following existence and uniqueness results.
\begin{theorem}\label{thm:smooth_billiard}
There exists a smooth model $(\mathcal{M}, \pi, \phi)$ for $\varphi$ such that $\phi$ is a contact flow. 
\end{theorem}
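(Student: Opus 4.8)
The plan is to construct $\M$ by gluing $\mathring M$ to collar-like charts around the reflected boundary pieces, using the flow-time transverse to $\partial M\setminus\partial_\mathrm{g}M$ as a coordinate, and then to transport the contact form along the way. First I would observe that the tangential reflection $R$ generates a $\Z/2$-like identification only on $\partial M\setminus\partial_\mathrm{g}M$, not on $M$, so the naive quotient $M/(p\sim R(p))$ is not a manifold; instead one uses Lemma \ref{lem:convexity} to straighten the flow near the non-grazing boundary. Concretely, the maps $\varphi|_{N_\mathrm{in/out}}^{-1}$ from Lemma \ref{lem:convexity}\emph{v)} give smooth charts on the open sets $\varphi(N_\mathrm{in/out})\subset M\setminus\partial_\mathrm{g}M$ with values in $N_\mathrm{in/out}\subset\R\times\partial_\mathrm{in/out}M$, i.e.\ a flow-box coordinate $(t,x,v)$ in which $\varphi_s$ acts by $t\mapsto t+s$. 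The key identification is then: on the overlap, the point parametrized by $(t,x,v)\in N_\mathrm{in}$ and the point parametrized by $(t,x,v')\in N_\mathrm{out}$ via $R$ should be glued, because by \eqref{eq:invariance} (and Lemma \ref{lem:convexity}\emph{vi)}) they lie on the same geodesic segment and are distinguished in $M$ only by the instantaneous reflection at $t=0$. Define $\M$ as the quotient of the disjoint union $\mathring M\,\sqcup\,(N_\mathrm{in}/\!\!\sim)$ — or more symmetrically $\mathring M\sqcup N_\mathrm{in}\sqcup N_\mathrm{out}$ — modulo (a) the diffeomorphisms $\varphi|_{N_\mathrm{in/out}}$ onto their intersections with $\mathring M$, and (b) the reflection identification $N_\mathrm{in}\ni(t,x,v)\sim(t,x,v')\in N_\mathrm{out}$. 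One must check this is Hausdorff and second countable (Hausdorffness uses Lemma \ref{lem:convexity}\emph{iii)}–\emph{v)}: the gluing maps are open embeddings with matching transition maps, and the two boundary pieces are separated from the grazing set), giving a smooth $(2n-1)$-manifold.

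Next I would define $\pi:M\setminus\partial_\mathrm{g}M\to\M$. On $\mathring M$ it is the tautological inclusion (diffeomorphism onto its image, giving \emph{i)}). On $\partial_\mathrm{in}M$ near the trapped dynamics it is $(x,v)\mapsto [(0,x,v)]\in N_\mathrm{in}/\!\!\sim$, and on $\partial_\mathrm{out}M$ it is $(x,v)\mapsto[(0,x,v')]$, which by construction equals $[(0,x,(v')')]=[(0,x,v)]$ read the other way — so $\pi(x,v)=\pi(x,v')$ for boundary points, exactly collapsing the reflection. Smoothness and surjectivity of $\pi$ follow from the chart definitions; well-definedness on overlaps is precisely the content of \eqref{eq:varphivarphig} together with the flow-box structure. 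Then one defines $\phi$ on $\mathcal D=(\mathrm{id}_\R\times\pi)(D)$ by declaring, in each flow-box chart, $\phi_s:t\mapsto t+s$, and on $\mathring M$ by the ambient flow $\varphi$ (which is genuinely smooth there, as noted after Lemma \ref{lem:D}); the compatibility \eqref{eq:flowcompat} holds chart-by-chart, and crucially $\phi$ \emph{is} an honest flow because the quotient has erased the $t+t'=0$ ambiguity from \eqref{eq:flowprop} — the two points $(x,v)$ and $(x,v')$ that spoiled the flow property for $\varphi$ have been identified in $\M$. Openness of $\mathcal D$ follows from openness of $D$ (Lemma \ref{lem:D}) and the fact that $\pi$ is an open map (being a local diffeomorphism in these charts).

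Finally, for the contact structure: the geodesic flow $\varphi^g$ preserves the canonical contact form $\alpha$ on $S\Sigma$, and by Lemma \ref{lem:alphainv} the pullback $R^*(\alpha|_{\partial M})=\alpha|_{\partial M}$. On $\mathring M\subset\M$ take the contact form to be $\alpha|_{\mathring M}$. On a flow-box chart $N_\mathrm{in/out}$, push $\alpha$ forward via $\varphi|_{N_\mathrm{in/out}}^{-1}$; one checks these local forms agree on overlaps with $\mathring M$ because $\varphi$ restricted there is $\varphi^g$ (by Lemma \ref{lem:convexity}\emph{vi)}) which preserves $\alpha$, and they agree across the reflection identification by Lemma \ref{lem:alphainv}. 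Hence a global $1$-form $\alpha_\M$ on $\M$ is well-defined, it restricts to a contact form (contact condition $\alpha_\M\wedge(\mathrm d\alpha_\M)^{n-1}\neq0$ is checked on $\mathring M$, which is dense), and $\phi^*_s\alpha_\M=\alpha_\M$ since this holds in every chart. I expect the main obstacle to be verifying that the quotient $\M$ is Hausdorff and that the reflection gluing is compatible with the flow-box gluing simultaneously — i.e.\ checking the cocycle/consistency condition on triple overlaps of $\mathring M$, $N_\mathrm{in}$, and $N_\mathrm{out}$ — which is where Lemma \ref{lem:convexity} is used in full strength; the contact-invariance part is then essentially formal given Lemma \ref{lem:alphainv}.
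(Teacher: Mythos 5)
Your proposal takes essentially the same route as the paper: pass to the quotient of $M\setminus\partial_\mathrm{g}M$ by the boundary reflection, endow it with a smooth structure via the flow-box charts from Lemma~\ref{lem:convexity} (the paper's $\Phi=\pi\circ\varphi|_{N_\mathrm{in}}$), and transport the canonical contact form, using Lemma~\ref{lem:alphainv} to match the two one-sided coordinate expressions across the gluing region. The one minor slip is the final verification of non-degeneracy: density of $\mathring M$ in $\M$ does not by itself rule out $\alpha_\M\wedge(\mathrm{d}\alpha_\M)^{n-1}$ vanishing on $\G$; what actually closes the argument is the explicit, $t$-independent coordinate expression $(\Phi^{-1})^*\alpha_\M=\mathrm{d}t+\alpha|_{\partial M\setminus\partial_\mathrm{g}M}$ on the flow chart, so that non-degeneracy at $t<0$ (where one sits inside $\pi(\mathring M)$) forces non-degeneracy at $t=0$ as well.
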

\begin{proof}
In the subsequent Section \ref{construction} we give an explicit construction of a manifold $\M$, a map $\pi$, a flow $\phi$, and a contact form $\alpha_\mathcal{M}$ with the required properties, culminating in the final \Cref{cor:main}.
\end{proof}

\begin{prop}\label{prop:uniqueness}
Suppose that $(\M,\pi,\phi)$  and $(\M',\pi',\phi')$ are two smooth models for $\varphi$. 
Then $(\M,\phi)$ and $(\M',\phi')$ are uniquely smoothly conjugate. More precisely, there is a unique diffeomorphism $F:\M\to \M'$ such that $F\circ \pi=\pi'$, $(\mathrm{id}_\R\times F)(\mathcal D)=\mathcal D'$, and $F\circ \phi=\phi'\circ (\mathrm{id}_\R\times F)|_{\mathcal D}$.
\end{prop}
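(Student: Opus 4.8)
The plan is to construct $F$ from the tautological candidate on the flow‑interior and then propagate smoothness along the flow. Let $\mathcal R$ be the equivalence relation on $M\setminus\partial_{\mathrm g}M$ that identifies each non‑grazing boundary point $(x,v)$ with its tangential reflection $(x,v')$ and fixes every other point. The central first step is to prove that \emph{for every smooth model $(\M,\pi,\phi)$ the fibres of $\pi$ are exactly the $\mathcal R$‑classes}. That $\pi(x,v)=\pi(x,v')$ for $(x,v)\in\partial_{\mathrm{in}}M$ follows from $\varphi_s(x,v)=\varphi^g_s(x,v')\to(x,v')$ as $s\to0^+$, see \eqref{eq:localflow2}: applying the continuous map $\pi$ gives $\pi(\varphi_s(x,v))\to\pi(x,v')$, while $\pi(\varphi_s(x,v))=\phi_s(\pi(x,v))\to\pi(x,v)$ by \eqref{eq:flowcompat} and continuity of the honest flow $\phi$ (the case $(x,v)\in\partial_{\mathrm{out}}M$ being symmetric). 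Conversely, for any $p\in M\setminus\partial_{\mathrm g}M$ there is $\delta>0$ with $\varphi_s(p)\in\mathring M$ for $0<|s|<\delta$ (immediate from the definition of $\varphi$ near the non‑grazing boundary); hence if $\pi(p_1)=\pi(p_2)$ then $\pi(\varphi_s(p_1))=\phi_s(\pi(p_1))=\pi(\varphi_s(p_2))$ for small $s\neq0$, and injectivity of $\pi|_{\mathring M}$ forces $\varphi_s(p_1)=\varphi_s(p_2)$ there; letting $s\to0^\pm$ and using that each one‑sided limit of $\varphi_s(p_i)$ is $p_i$ or its reflection (according to whether $p_i$ lies in $\mathring M$, $\partial_{\mathrm{out}}M$, or $\partial_{\mathrm{in}}M$), a short case check gives $p_1\,\mathcal R\,p_2$. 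Since this description of the fibres is model‑independent, $\pi$ and $\pi'$ determine the same partition of $M\setminus\partial_{\mathrm g}M$; as both maps are surjective there is a unique bijection $F:\M\to\M'$ with $F\circ\pi=\pi'$, and \emph{any} map satisfying $F\circ\pi=\pi'$ equals it, which yields the uniqueness assertion.

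It remains to see that $F$ and $F^{-1}$ are smooth and that $F$ intertwines the flows. For the intertwining one argues formally: given $(t,q)\in\mathcal D$, pick $p$ with $\pi(p)=q$ and $(t,p)\in D$; then $F(\phi_t(q))=F(\pi(\varphi_t(p)))=\pi'(\varphi_t(p))=\phi'_t(\pi'(p))=\phi'_t(F(q))$ by two uses of \eqref{eq:flowcompat}, and $(\mathrm{id}_\R\times F)(\mathcal D)=(\mathrm{id}_\R\times F)(\mathrm{id}_\R\times\pi)(D)=(\mathrm{id}_\R\times\pi')(D)=\mathcal D'$. For smoothness on the interior, a dimension count settles matters: $\pi|_{\mathring M}$ is an immersion so $\dim\M\ge 2n-1$, while $\dim\M\ge 2n$ would make $\M=\pi(\mathring M)\cup\pi(\partial M\setminus\partial_{\mathrm g}M)$ a union of two null sets; hence $\dim\M=2n-1$, $\pi|_{\mathring M}$ is a local diffeomorphism onto the open set $\pi(\mathring M)\subset\M$, and on $\pi(\mathring M)$ we have $F=\pi'|_{\mathring M}\circ(\pi|_{\mathring M})^{-1}$, a composition of smooth maps with smooth inverse $\pi|_{\mathring M}\circ(\pi'|_{\mathring M})^{-1}$.

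The step I expect to be the main obstacle is global smoothness of $F$ at the image of the boundary. Given $q_0\in\pi(\partial M\setminus\partial_{\mathrm g}M)$, choose $p_0\in\pi^{-1}(q_0)$ (which lies in $\partial M\setminus\partial_{\mathrm g}M$) and a small $t_0\neq0$ of the appropriate sign with $\varphi_{t_0}(p_0)\in\mathring M$, so $\phi_{t_0}(q_0)=\pi(\varphi_{t_0}(p_0))\in\pi(\mathring M)$. Since $\mathcal D$ and $\pi(\mathring M)$ are open, there is a neighbourhood $V\ni q_0$ in $\M$ with $\{t_0\}\times V\subset\mathcal D$ and $\phi_{t_0}(V)\subset\pi(\mathring M)$; since $\mathcal D'$ is a flow domain and $(\mathrm{id}_\R\times F)(\mathcal D)=\mathcal D'$, the composition $\phi'_{-t_0}\circ F\circ\phi_{t_0}$ is defined on $V$, and by the intertwining it equals $F|_V$, exhibiting $F$ as smooth near $q_0$. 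Hence $F\in C^\infty(\M,\M')$; running the same argument with the two models exchanged gives $F^{-1}\in C^\infty(\M',\M)$, so $F$ is the desired diffeomorphism, and its uniqueness was already noted. The only genuinely delicate points are the bookkeeping of one‑sided limits in the fibre computation and keeping the auxiliary times $t_0$ inside $D$, $\mathcal D$ and $\mathcal D'$; the rest is soft.
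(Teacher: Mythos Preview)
Your argument is correct. Both you and the paper first identify the fibres of $\pi$ with the reflection classes, then deduce the unique bijection $F$ with $F\circ\pi=\pi'$, get the flow intertwining and $(\mathrm{id}_\R\times F)(\mathcal D)=\mathcal D'$ formally from \eqref{eq:flowcompat}, and finally verify smoothness separately on $\pi(\mathring M)$ and near $\pi(\partial M\setminus\partial_{\mathrm g}M)$. The difference lies in how the last two points are handled. The paper first proves an auxiliary structural lemma (\Cref{lem:G}): $\G:=\pi(\partial M\setminus\partial_{\mathrm g}M)$ is a codimension-$1$ submanifold, $\pi|_{\partial_{\mathrm{in/out}}M}$ are diffeomorphisms onto $\G$, and $\phi$ is transversal to $\G$. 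It then invokes the inverse function theorem to build flow charts $\phi:V\to U$ and $\phi':V'\to U'$ near $\G$ and $\G'$, and checks that in these coordinates $F$ becomes the identity. Your route bypasses all of this: a Sard-type dimension count forces $\dim\M=2n-1$, which makes $\pi(\mathring M)$ open without knowing anything about $\G$; and your flow-conjugation formula $F|_V=\phi'_{-t_0}\circ F|_{\pi(\mathring M)}\circ\phi_{t_0}$ gives smoothness near $\G$ directly from the already-established smoothness on the interior, avoiding the inverse function theorem entirely. Your argument is shorter and more self-contained for this particular proposition; the paper's approach has the advantage that \Cref{lem:G} is reused later (e.g.\ in \Cref{prop:pullbacks} and \Cref{cor:main}), so the extra structure pays for itself downstream.
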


The proof of Proposition \ref{prop:uniqueness} is given in Appendix \ref{sec:proof3}.

\begin{cor}\label{cor:contact_prop}
Let $(\mathcal{M}, \pi, \phi)$ be a smooth model for $\varphi$. Then $\phi$ is a contact flow, i.e., there exists a contact form $\alpha_\mathcal{M}$ on $\mathcal{M}$ whose Reeb vector field is the generator $\mathbf{X}$ of $\phi$.
\end{cor}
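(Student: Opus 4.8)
The plan is to upgrade the existence statement of \Cref{thm:smooth_billiard} to the universal statement of the corollary by invoking the uniqueness statement of \Cref{prop:uniqueness}. Concretely, \Cref{thm:smooth_billiard} furnishes one smooth model $(\mathcal{M}_0,\pi_0,\phi_0)$ for $\varphi$ together with a contact form $\alpha_0$ on $\mathcal{M}_0$ whose Reeb vector field is the generator $\mathbf{X}_0$ of $\phi_0$. Given the arbitrary smooth model $(\mathcal{M},\pi,\phi)$ in the statement, \Cref{prop:uniqueness} yields a diffeomorphism $F:\mathcal{M}_0\to\mathcal{M}$ with $F\circ\pi_0=\pi$, $(\mathrm{id}_\R\times F)(\mathcal D_0)=\mathcal D$ and $F\circ\phi_0=\phi\circ(\mathrm{id}_\R\times F)|_{\mathcal D_0}$. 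I would then transport the contact structure along $F$ and set $\alpha_{\mathcal M}\defgr(F^{-1})^\ast\alpha_0$, a smooth $1$-form on $\mathcal M$.

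What remains are two standard naturality checks. First, $\alpha_{\mathcal M}$ is again a contact form: since $F$ is a diffeomorphism and $(F^{-1})^\ast$ commutes with $\mathrm{d}$ and with $\wedge$ and sends nowhere-vanishing top-degree forms to nowhere-vanishing ones, the contact condition $\alpha_0\wedge(\mathrm{d}\alpha_0)^{\wedge(n-1)}\neq 0$ on $\mathcal{M}_0$ transfers to $\alpha_{\mathcal M}\wedge(\mathrm{d}\alpha_{\mathcal M})^{\wedge(n-1)}\neq 0$ on $\mathcal M$. Second, the Reeb vector field of $\alpha_{\mathcal M}$ is $F_\ast\mathbf{X}_0$: recalling that the Reeb field of a contact form $\beta$ is the unique vector field $R$ with $\iota_R\beta=1$ and $\iota_R\mathrm{d}\beta=0$, one uses that $(F^{-1})^\ast\big(\iota_Y\omega\big)=\iota_{F_\ast Y}\big((F^{-1})^\ast\omega\big)$ for every vector field $Y$ and differential form $\omega$ on $\mathcal{M}_0$, together with $(F^{-1})^\ast\mathrm{d}=\mathrm{d}(F^{-1})^\ast$ and $(F^{-1})^\ast\alpha_0=\alpha_{\mathcal M}$, to conclude $\iota_{F_\ast\mathbf{X}_0}\alpha_{\mathcal M}=1$ and $\iota_{F_\ast\mathbf{X}_0}\mathrm{d}\alpha_{\mathcal M}=0$.

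Finally I would identify $F_\ast\mathbf{X}_0$ with the generator $\mathbf{X}$ of $\phi$: differentiating the conjugacy relation $F\circ\phi_0=\phi\circ(\mathrm{id}_\R\times F)|_{\mathcal D_0}$ at time $t=0$ gives $\mathrm{d}F\circ\mathbf{X}_0=\mathbf{X}\circ F$, i.e.\ $F_\ast\mathbf{X}_0=\mathbf{X}$. Hence $\alpha_{\mathcal M}$ is a contact form on $\mathcal M$ whose Reeb vector field is $\mathbf{X}$, which is the assertion. I do not expect a genuine obstacle here: all the dynamical and geometric substance is contained in \Cref{thm:smooth_billiard} and \Cref{prop:uniqueness}, and the only point deserving a moment's care is that both notions involved — the Reeb field of a contact form and the infinitesimal generator of a flow — are covariant under the conjugating diffeomorphism via the same operation $F_\ast$, so that the form transported from the reference model carries precisely the Reeb field required by the statement.
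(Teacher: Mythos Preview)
Your proof is correct and follows precisely the approach of the paper, which in fact dispatches the corollary in a single sentence: pull back the contact form guaranteed by \Cref{thm:smooth_billiard} along the diffeomorphism of \Cref{prop:uniqueness}. You have simply (and correctly) spelled out the routine naturality checks that the paper leaves implicit.
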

\begin{proof}
A contact form with the desired property is provided by the pullback of the contact form whose existence is guaranteed by \Cref{thm:smooth_billiard} along the unique diffeomorphism of \Cref{prop:uniqueness}. 
\end{proof}

\subsection{Smooth models and the billiard generator}
Here we show that smooth models for the non-grazing billiard flow $\varphi$ are naturally related to the spaces of  billiard functions and the billiard generator $\mathbf{P}$ defined in Section \ref{sec:CBillP}. In the following, let $(\mathcal{M}, \pi, \phi)$ be a smooth model for $\varphi$ as in  \Cref{def:smoothmodel}, let $\mathcal O\subset \M$ be an open set, and write $O:=\pi^{-1}(\mathcal O)\subset M\setminus \partial_\mathrm{g} M$. Then $O\cap (\partial M\setminus \partial_\mathrm{g} M)$ is reflection-symmetric  by \Cref{lem:G}. Further, we denote by $\mathbf{X}: \mathrm{C}^\infty(\mathcal O) \rightarrow \mathrm{C}^\infty(\mathcal O)$ the generator of the smooth flow $\phi$ on $\mathcal O$.
\begin{prop} \label{prop:pullbacks}
The pullback $\pi^*: \mathrm{C}^\infty(\mathcal O) \to \mathrm{C}^\infty(O)$ is injective and  one has
\bq
\pi^*(\mathrm{C}^\infty(\mathcal O))=\mathrm{C}^\infty_{\mathrm{Bill}}(O),\qquad \pi^*(\mathrm{C}^\infty_\mathrm{c}(\mathcal O))=\mathrm{C}^\infty_{\mathrm{Bill},\mathrm{c}}(O).\label{eq:pullbacks}
\eq
Moreover, we have the equality 
	\begin{equation}
	\pi^* \circ \mathbf{X}\circ (\pi^*)^{-1} = \mathbf{P}\label{eq:XP}
	\end{equation}
	of linear operators $\mathrm{C}^\infty_{\mathrm{Bill}}(O)\to \mathrm{C}^\infty_{\mathrm{Bill}}(O)$ or $\mathrm{C}^\infty_{\mathrm{Bill},\mathrm{c}}(O)\to \mathrm{C}^\infty_{\mathrm{Bill},\mathrm{c}}(O)$.
\end{prop}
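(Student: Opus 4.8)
The plan is to prove the three assertions in turn — injectivity of $\pi^*$, the range identities \eqref{eq:pullbacks}, and the conjugation formula \eqref{eq:XP} — with almost all of the work going into the inclusion "$\supseteq$" of \eqref{eq:pullbacks}. Throughout I would use the following structural facts about $\pi$, which one either quotes from \Cref{lem:G} or extracts from \eqref{eq:invariance} and \eqref{eq:flowcompat}: \emph{(a)} $\pi|_O\colon O\to\mathcal O$ is surjective (because $\pi$ is surjective and $O=\pi^{-1}(\mathcal O)$), whence $\pi^*$ is injective; \emph{(b)} $\pi(\mathring M)$ is open in $\mathcal M$ and $\pi|_{\mathring M}$ is a diffeomorphism onto it (this is \emph{i)} of \Cref{def:smoothmodel}, by invariance of domain); \emph{(c)} the fibres of $\pi$ are precisely the points of $\mathring M$ and the reflection pairs $\{(x,v),(x,v')\}$ in $\partial M\setminus\partial_\mathrm{g} M$. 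Fact \emph{(c)} follows by applying $\pi$ to $\varphi_t(x,v)=\varphi_t(x,v')$ from \eqref{eq:invariance} (for some small $t\neq0$) and using injectivity of the flow maps $\phi_t$; that there are no further identifications is seen by flowing a hypothetical pair in a common fibre slightly off the boundary into $\mathring M$, where $\pi$ is injective, and letting $t\to0$.

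For "$\pi^*(\mathrm{C}^\infty(\mathcal O))\subseteq\mathrm{C}^\infty_{\mathrm{Bill}}(O)$": given $h\in\mathrm{C}^\infty(\mathcal O)$, the function $h\circ\pi$ is smooth on $O$ since $\pi$ is smooth, and on $\varphi^{-1}(O)$ the intertwining \eqref{eq:flowcompat} gives $(h\circ\pi)\circ\varphi=h\circ\phi\circ(\mathrm{id}_\R\times\pi)$; this composition has image in $\mathcal O$ (because $\varphi_t(x,v)\in O=\pi^{-1}(\mathcal O)$ there) and is a composition of smooth maps, hence is smooth on $\varphi^{-1}(O)$. Thus $h\circ\pi\in\mathrm{C}^\infty_{\mathrm{Bill}}(O)$; the compact-support variant follows because $\pi$ is proper (clear from the construction in Section~\ref{construction}), so $\operatorname{supp}(h\circ\pi)\subseteq\pi^{-1}(\operatorname{supp}h)$ is compact.

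For the reverse inclusion, take $f\in\mathrm{C}^\infty_{\mathrm{Bill}}(O)$. First one checks that $f$ is constant on fibres of $\pi$: on a reflection pair in $O$ with $(x,v)\in\partial_\mathrm{in} M$ one has $\varphi_t(x,v)=\varphi_t^g(\cdot)=\varphi_t(x,v')$ for small $t<0$ by \eqref{eq:localflow2}, so continuity of $f\circ\varphi$ at $t=0$ forces $f(x,v)=f(x,v')$; on singleton fibres there is nothing to show. Hence $f$ descends to a function $h\colon\mathcal O\to\C$ with $f=\pi^*h$, and it remains to prove $h$ smooth. Near $p=\pi(x,v)$ with $(x,v)\in\mathring M\cap O$ this is immediate: $h=f\circ(\pi|_{\mathring M})^{-1}$ there. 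The delicate case is $p=\pi(x,v)$ with $(x,v)\in\partial M\setminus\partial_\mathrm{g} M$, say $(x,v)\in\partial_\mathrm{in} M$. Here I would fix $\tau>0$ small enough that $q:=\varphi_\tau(x,v)=\varphi_\tau^g(x,v')\in\mathring M\cap O$ (possible by transversality and openness of $\varphi^{-1}(O)$), so that $\phi_\tau(p)=\pi(q)\in\pi(\mathring M)$, and set $m(p'):=(\pi|_{\mathring M})^{-1}(\phi_\tau(p'))$, a smooth $\mathring M$-valued function of $p'$ on a neighbourhood of $p$. The key identity is
\[
h(p')=(f\circ\varphi)(-\tau,\,m(p'))\qquad\text{for }p'\text{ near }p,
\]
which is verified by checking that $\varphi_{-\tau}(m(p'))\in O$ and $\pi\big(\varphi_{-\tau}(m(p'))\big)=p'$ — using the flow property \eqref{eq:flowprop}, the intertwining \eqref{eq:flowcompat}, and reflection-symmetry of $O\cap(\partial M\setminus\partial_\mathrm{g} M)$ (from \Cref{lem:G}) — and then invoking that $f$ is constant on the fibre $\pi^{-1}(p')$. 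Since $f\circ\varphi$ is smooth on $\varphi^{-1}(O)$ \emph{by the very definition of a billiard function} and $m$ is smooth, the right-hand side is smooth in $p'$. This settles \eqref{eq:pullbacks} for $\mathrm{C}^\infty_{\mathrm{Bill}}$, and the compact-support version follows since $\operatorname{supp}h\subseteq\pi(\operatorname{supp}f)$ is compact.

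Finally, for \eqref{eq:XP}, write $f=\pi^*h$ and compute pointwise, using \eqref{eq:flowcompat} and the definition \eqref{eq:P} of $\mathbf P$,
\[
\pi^*(\mathbf X h)(x,v)=\frac{\mathrm d}{\mathrm dt}\Big|_{t=0}h\big(\phi_t(\pi(x,v))\big)=\frac{\mathrm d}{\mathrm dt}\Big|_{t=0}h\big(\pi(\varphi_t(x,v))\big)=\frac{\mathrm d}{\mathrm dt}\Big|_{t=0}f\big(\varphi_t(x,v)\big)=(\mathbf P f)(x,v),
\]
the derivative being legitimate because $t\mapsto(f\circ\varphi)(t,x,v)$ is smooth near $t=0$; restricting to the compactly supported subspaces gives the statement there. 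I expect the main obstacle to be the smoothness of the descended function $h$ at images of boundary points, and the mechanism that overcomes it is the displayed identity $h=(f\circ\varphi)(-\tau,m(\cdot))$, which trades smoothness of $h$ at those exceptional points for smoothness of $f\circ\varphi$ — exactly the property built into the definition of $\mathrm{C}^\infty_{\mathrm{Bill}}$.
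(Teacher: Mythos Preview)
Your proof is correct and follows essentially the same strategy as the paper: injectivity from surjectivity of $\pi$, the forward inclusion from the intertwining \eqref{eq:flowcompat} and properness of $\pi$, the conjugation formula \eqref{eq:XP} by a direct pointwise computation, and the reverse inclusion by showing smoothness of the descended function via the defining property that $f\circ\varphi$ is smooth. The only tactical difference lies in how smoothness of the descended function near a gluing point $p\in\mathcal G$ is verified: the paper uses the flow-chart parameterization $\phi\colon V\subset\R\times\mathcal G\to\mathcal M$ (invoking the transversality from \Cref{lem:G} and the argument of \Cref{prop:uniqueness}) and reduces to smoothness of the restriction of $f\circ\varphi$ to the boundary submanifold $\R\times\partial_\mathrm{in} M$, whereas you fix a positive time $\tau$, push $p'$ into the interior via $m(p')=(\pi|_{\mathring M})^{-1}(\phi_\tau(p'))$, and exhibit $h(p')=(f\circ\varphi)(-\tau,m(p'))$ as a composition of smooth maps with values in $\R\times\mathring M$. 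Your variant has the minor advantage that it avoids the restriction-to-boundary step and the explicit appeal to the flow-chart construction; both arguments rest on the same engine, namely smoothness of $f\circ\varphi$ on $\varphi^{-1}(O)$.
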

\begin{proof}The injectivity of $\pi^*$ is due to the surjectivity of $\pi$. The inclusions  $\pi^*(\mathrm{C}^\infty(\mathcal O))\subset \mathrm{C}^\infty_{\mathrm{Bill}}(O)$ and $\pi^*(\mathrm{C}^\infty_\mathrm{c}(\mathcal O))\subset \mathrm{C}^\infty_{\mathrm{Bill},\mathrm{c}}(O)$ follow from the fact that $\pi\circ\varphi:D\to \M$ is smooth by  \eqref{eq:flowcompat} and that $\pi$ is proper by \Cref{lem:G}. 

 To prove the reverse inclusions, let $f$ be a function that belongs to one of the billiard function spaces appearing on the right-hand side of \eqref{eq:pullbacks}. Using Lemma \eqref{lem:G}, define  $g:\mathcal O\to \C$ by
\[
g(p):=\begin{cases}
f(\pi|_{\mathring M}^{-1}(p)),& p\in \mathcal O\cap\pi(\mathring M),\\
f(\pi|_{\partial_\mathrm{in}M}^{-1}(p)), & p\in \mathcal O\cap\pi(\partial_\mathrm{in}M)=\mathcal O\cap \G.
\end{cases}
\]
Then we have $g\circ\pi=f$, in particular, $g$ has compact support if $f$ has compact support because $\pi$ is continuous. By the same argument as in the proof of \Cref{prop:uniqueness} proving that $g$ is smooth reduces to showing that for an open set $V\subset (\R\times \G)\cap \mathcal{D}$ containing $\{0\}\times \G$  the composition $g\circ \phi|_W: W\to \C$ is smooth, where $W:=\phi^{-1}(\mathcal O)\cap V$. To this end we use \eqref{eq:flowcompat}, by which  $g\circ \phi\circ (\mathrm{id}_\R\times \pi)|_D=g\circ \pi\circ \varphi=f\circ \varphi$. Since $\pi|_{\partial_\mathrm{in} M}:\partial_\mathrm{in} M\to  \G$ is a diffeomorphism by \Cref{lem:G}, we see that $g\circ \phi|_W$ is smooth iff the restriction of $f\circ \varphi$ to the set $(\mathrm{id}_\R\times \pi|^{-1}_{\partial_\mathrm{in} M})(W)\subset \R\times \partial_\mathrm{in} M$ is smooth. The latter holds true since $f$ is a smooth billiard function.  Indeed, $(\mathrm{id}_\R\times \pi|^{-1}_{\partial_\mathrm{in} M})(W)$ is open in  $\R\times \partial_\mathrm{in} M$ which is a  boundary submanifold of $\R\times (M\setminus \partial_\mathrm{g} M)$ and the restriction of the smooth function $f\circ \varphi$ to that boundary submanifold is again smooth. We conclude that $f=\pi^\ast g$.

To finally prove \eqref{eq:XP}, we first note that the generator $\mathbf{X}$ of $\phi$ acts on $f\in \mathrm{C}^\infty(\mathcal O)$ via
	\bqn
	\mathbf{X} f(p) = \frac{\mathrm{d}}{\mathrm{d} t}\bigg|_{t = 0} f\circ \phi_t(p), \quad p\in \mathcal O.
	\eqn
	Given $p\in O$ and $f\in \mathrm{C}^\infty(\mathcal O)$ we therefore calculate using \eqref{eq:flowcompat}
	\bqn
	\mathbf{P} (f\circ \pi)(p) = \frac{\mathrm{d}}{\mathrm{d} t}\bigg|_{t = 0} f\circ \pi\circ \varphi_t(p) = \frac{\mathrm{d}}{\mathrm{d} t}\bigg|_{t = 0} f\circ \phi_t(\pi(p)) = \mathbf{X} f(\pi(p)) ,
	\eqn
	finishing the proof.
\end{proof}

\subsection{Smooth trapped set, closed trajectories, and hyperbolicity}

We already defined the trapped set of the non-grazing billiard flow $\varphi$ in Section \ref{section_trapped}. Given a smooth model $(\M,\pi,\phi)$ of $\varphi$ as in Definition \ref{def:smoothmodel} we define the corresponding notion of trapped set for $\phi$ as
\begin{equation} \label{eq:Ksmooth}
\mathcal{K} \defgr \{p\in \M \,|\, \R\times\{p\} \subset\mathcal{D},\; \exists ~\text{compact}~\mathcal{W} \subset \M ~\text{with}~ \phi(\R\times \{p\})\subset \mathcal{W}\} .
\end{equation}
We then get the following dynamical correspondence between the non-grazing billiard flow and its smooth model flow:
\begin{prop} \label{prop:trajectory_bijection}
The equalities $\mathcal{K} = \pi(K)$, $\pi^{-1}(\mathcal{K}) = K$ hold, and there exists a natural period-preserving bijection between the closed trajectories of $\phi$ and the closed trajectories of $\varphi$. More precisely:
\begin{enumerate}
\item[i)] Given $p\in \M$ with $\phi_T(p) = p$ for some $T > 0$ and $\phi_t(p)\neq p$ for all $t\in (0,T)$, there exists $(x,v)\in M\setminus \partial_\mathrm{g} M$ such that $\varphi_T(x,v)=(x,v)$, $\varphi_t(x,v)\neq (x,v)$ for all $t\in (0,T)$, and $\pi(x,v) = p$.

\item[ii)] Conversely, given $(x,v)\in M\setminus \partial_\mathrm{g} M$ with $\varphi_T(x,v) = (x,v)$ for some $T > 0$ and $\varphi_t(x,v)\neq (x,v)$ for all $t\in (0,T)$, then $\pi(x,v)\in \mathcal{K}$, $\phi_T(\pi(x,v))=\pi(x,v)$, and $\phi_t(\pi(x,v))\neq\pi(x,v)$ for all $t\in (0,T)$.
\end{enumerate}
\end{prop}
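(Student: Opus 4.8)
The plan is to deduce everything from the intertwining relation \eqref{eq:flowcompat}, the surjectivity and properness of $\pi$, and the structural properties of $\pi$ recorded in \Cref{lem:convexity} and (the as-yet-unstated) \Cref{lem:G}: namely that $\pi|_{\mathring M}$ is a diffeomorphism onto its image, $\pi|_{\partial_\mathrm{in}M}$ is a diffeomorphism onto the ``seam'' $\G=\pi(\partial_\mathrm{in}M)=\pi(\partial_\mathrm{out}M)$, and $\pi(x,v)=\pi(x,v')$ for $(x,v)\in\partial M\setminus\partial_\mathrm{g}M$. First I would establish the set equalities. For $\mathcal K=\pi(K)$: if $(x,v)\in K$, then $\R\times\{(x,v)\}\subset D$, so by \eqref{eq:flowcompat} one has $\R\times\{\pi(x,v)\}\subset\mathcal D$ and $\phi(\R\times\{\pi(x,v)\})=\pi(\varphi(\R\times\{(x,v)\}))\subset\pi(W)$, which is compact since $\pi$ is continuous; hence $\pi(x,v)\in\mathcal K$. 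Conversely, if $p\in\mathcal K$, pick $(x,v)\in\pi^{-1}(p)$ (nonempty by surjectivity); I must check $\R\times\{(x,v)\}\subset D$ and that the orbit stays in a compact set. The first follows because $\mathcal D=(\mathrm{id}_\R\times\pi)(D)$ and $\{t:(t,x,v)\in D\}$ is an open interval about $0$ (\Cref{lem:D}): if this interval were bounded, say at $T_\mathrm{max}(x,v)<\infty$, the orbit would leave every compact set as $t\uparrow T_\mathrm{max}$, contradicting that $\phi_t(p)$ stays in the compact $\mathcal W$ via \eqref{eq:flowcompat} together with properness of $\pi$ (so $\pi^{-1}(\mathcal W)$ is compact). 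Once $\R\times\{(x,v)\}\subset D$, \eqref{eq:flowcompat} gives $\varphi(\R\times\{(x,v)\})\subset\pi^{-1}(\mathcal W)$, compact, so $(x,v)\in K$. This simultaneously proves $\pi^{-1}(\mathcal K)=K$.

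Next, part ii) is the easy direction: given $(x,v)$ with $\varphi_T(x,v)=(x,v)$ and no shorter period, \eqref{eq:flowcompat} immediately yields $\phi_T(\pi(x,v))=\pi(\varphi_T(x,v))=\pi(x,v)$, and $\pi(x,v)\in\mathcal K$ because the orbit is compact (it is the continuous image under $\pi\circ\varphi(\cdot,x,v)$ of $[0,T]$, hence compact, and is flow-invariant). The only real content is the minimality claim: I must rule out $\phi_t(\pi(x,v))=\pi(x,v)$ for some $t\in(0,T)$. Suppose this happens. Then $\pi(\varphi_t(x,v))=\pi(x,v)$, so $\varphi_t(x,v)$ and $(x,v)$ have the same $\pi$-image. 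If both lie in $\mathring M$, injectivity of $\pi|_{\mathring M}$ forces $\varphi_t(x,v)=(x,v)$, contradicting minimality of $T$. If they lie on the boundary $\partial M\setminus\partial_\mathrm{g}M$, then $\varphi_t(x,v)\in\{(x,v),(x,v')\}$; the case $(x,v')$ must be excluded using \eqref{eq:flowprop} and the reflection structure — here I would argue that if $\varphi_t(x,v)=(x,v')$ then applying $\varphi_t$ again (or using \eqref{eq:invariance}) produces $\varphi_{2t}(x,v)=(x,v)$ with $2t\le T$, and a parity/consistency analysis of the two cases in \eqref{eq:flowprop} rules out $2t<T$ as well as the mixed boundary-type scenarios, or alternatively reduces to the interior case by flowing slightly off the boundary. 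The mixed case where one of the two points is interior and the other on the boundary cannot occur since $\pi(\mathring M)$ and $\G$ are disjoint.

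Part i) is the converse and I expect it to be the main obstacle. Given $p$ with $\phi_T(p)=p$, minimal, I need a genuine closed $\varphi$-orbit upstairs. Pick any $(x,v)\in\pi^{-1}(p)$. By \eqref{eq:flowcompat}, $\pi(\varphi_T(x,v))=\phi_T(p)=p=\pi(x,v)$, so $\varphi_T(x,v)$ lies in $\pi^{-1}(p)$, which by the structure lemmas is either a single point in $\mathring M$ or a reflection pair $\{(x,v),(x,v')\}\subset\partial M\setminus\partial_\mathrm{g}M$. In the interior case we are done: $\varphi_T(x,v)=(x,v)$, and minimality transfers from $\phi$ by the argument of part ii). In the boundary case, $\varphi_T(x,v)\in\{(x,v),(x,v')\}$; if it equals $(x,v)$ we are again done, and if it equals $(x,v')$ I would instead choose the base point more carefully — e.g. flow a tiny time $\eps\in(0,T)$ so that $\varphi_\eps(x,v)\in\mathring M$ (possible since the orbit is not contained in the boundary, as a periodic billiard orbit meets $\partial M$ only at isolated times), set $q=\pi(\varphi_\eps(x,v))=\phi_\eps(p)$, which also has minimal period $T$, and repeat the interior-case argument with $(x',v'')\defgr\varphi_\eps(x,v)\in\mathring M$ as the lift of $q$. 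The key point making this work is that $\pi^{-1}(q)$ is then a single interior point, so $\varphi_T(x',v'')=(x',v'')$ is forced, giving a closed $\varphi$-orbit of period dividing $T$; minimality then follows exactly as in part ii) from the minimality of $T$ for $\phi$ at $q$, using that $\pi$ maps this orbit injectively (it is an embedded circle in $\mathring M$ away from finitely many boundary times, and $\pi$ is injective on $\mathring M$). The one case needing care is a periodic orbit lying entirely in the boundary seam — but such an orbit would have to be a grazing-free periodic trajectory contained in $\partial M\setminus\partial_\mathrm{g}M$, which is impossible for the billiard flow since any boundary point is immediately pushed into $\mathring M$ in forward or backward time by the transversality in \eqref{eq:tpm}. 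I would include a short remark to that effect.
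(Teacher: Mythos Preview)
Your proposal is correct and follows the same overall strategy as the paper: everything is deduced from the intertwining relation \eqref{eq:flowcompat} together with the fiber description of $\pi$ in \Cref{lem:G}. The one substantive difference is in the boundary case of i). You perturb the base point into $\mathring M$ and invoke injectivity of $\pi|_{\mathring M}$; the paper instead selects the lift $(x,v)\in\partial_\mathrm{in}M$ and uses (implicitly) that for any $T>0$ the billiard flow records boundary hits in $\partial_\mathrm{in}M$, so $\varphi_T(x,v)\in\partial_\mathrm{in}M$ and injectivity of $\pi|_{\partial_\mathrm{in}M}$ already forces $\varphi_T(x,v)=(x,v)$. That same observation would also streamline your argument for ii), since the case $\varphi_t(x,v)=(x,v')$ you worry about simply cannot occur. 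Finally, your escape-from-compacts argument for $\pi^{-1}(\mathcal K)\subset K$ is morally right but a bit delicate because $\varphi$ is not a genuine smooth flow; a cleaner route is to note that $(t,p)\in\mathcal D=(\mathrm{id}_\R\times\pi)(D)$ forces $(t,x,v)\in D$ directly, using that $\pi^{-1}(p)$ has at most two points related by reflection and that $D$ is reflection-invariant (\Cref{lem:D}).
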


\begin{proof}
The equalities $\mathcal{K} = \pi(K)$, $\pi^{-1}(\mathcal{K}) = K$ follow from \eqref{eq:flowcompat} and the facts that $\pi$ is continuous and also proper by Lemma \ref{lem:G}.

Let $p = \pi(x, v)\in \mathcal{M}$ be as in \emph{i)}. If $p\in\pi\left( \partial M\setminus \partial_\mathrm{g} M \right)$ we can make the choice $(x, v)\in \partial_\mathrm{in} M$ because by Lemma \ref{lem:G} and $\mathcal{M} = \pi(\mathring{M})\sqcup \mathcal{G}$ we must have $\pi^{-1}(p) = \{(x, v), (x, v')\}$. Then, regardless of whether $p\in\pi\left( \partial M\setminus \partial_\mathrm{g} M \right)$ or not, $\phi_T(p) = p$ implies $\varphi_T(x, v) = (x, v)$ and $\varphi_t(x, v) = (x, v)$ for $t\in (0, T)$ would imply the contradiction $\phi_t(p) = p$. Claim \emph{ii)} can be checked directly by using the relation $\phi_t\circ \pi = \pi\circ\varphi_t$.
\end{proof}

Finally we discuss hyperbolicity of our smooth models: The model flow $\varphi$ is called \emph{hyperbolic on its trapped set $\mathcal{K}$} if the following condition similar to \Cref{def:hypbilliard} holds: For any $p\in \mathcal{K}$ the tangent bundle $T_p \mathcal{M}$ splits in a continuous and flow invariant fashion as
\begin{equation} \label{eq:open_splitting}
T_p \mathcal{M} = \mathbb{R}\cdot \mathbf{X}(p) \oplus \mathcal{E}_s(p) \oplus \mathcal{E}_u(p) ,
\end{equation}
and there exist constants $C_0, C_1 > 0$ such that
\begin{equation} \label{eq:open_hyperbolic}
\begin{split}
\Arrowvert \mathrm{d}\phi_t(p) W\Arrowvert_{\phi_t(p)} &\leq C_0 \exp(-C_1 t) \Arrowvert W\Arrowvert_p, \quad t\geq 0,~ W\in \mathcal{E}_s(p) \\
\Arrowvert \mathrm{d}\phi_t(p) W\Arrowvert_{\phi_t(p)} &\geq C_0^{-1} \exp(C_1 t) \Arrowvert W\Arrowvert_p, \quad t\geq 0,~ W\in \mathcal{E}_u(p) ,
\end{split}
\end{equation}
where $\Arrowvert \cdot\Arrowvert$ denotes any continuous norm on $T\mathcal{M}$. The next proposition connects hyperbolicity of $\varphi$ with hyperbolicity of its smooth model flows:
\begin{prop}\label{thm:hypsmooth}
	Let $\varphi: D\rightarrow M$ be a non-grazing billiard flow that is hyperbolic on its trapped set $K$. Then any smooth model $(\mathcal{M},\pi,\phi)$ for $\varphi$ is hyperbolic on its trapped set $\mathcal{K}$.
\end{prop}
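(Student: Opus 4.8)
The plan is to push the hyperbolic structure from $K\cap\mathring M$ forward to $\mathcal K$ by means of the diffeomorphism $\pi|_{\mathring M}$ onto its image, and then to propagate it across the thin ``reflection set'' $\mathcal K\setminus\pi(\mathring M)$ using that $\phi$ is an honest \emph{smooth} flow. Throughout I would recall from \Cref{prop:trajectory_bijection} that $\mathcal K=\pi(K)$ and $\pi^{-1}(\mathcal K)=K$, and write $\mathcal K^\circ:=\mathcal K\cap\pi(\mathring M)$, so that $\pi|_{\mathring M}$ restricts to a bijection $K\cap\mathring M\to\mathcal K^\circ$ with $\pi^{-1}(\mathcal K^\circ)=K\cap\mathring M$. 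The first point to establish is that every $p\in\mathcal K\setminus\mathcal K^\circ$ is carried into $\mathcal K^\circ$ after an arbitrarily short time in \emph{both} time directions: writing $p=\pi(x,v)$ with $(x,v)\in\partial_\mathrm{in}M$, formula \eqref{eq:localflow2} gives $\varphi_{-t}(x,v)=\varphi^g_{-t}(x,v)\in\mathring M$ and $\varphi_t(x,v)=\varphi^g_t(x,v')\in\mathring M$ for all sufficiently small $t>0$, whence $\phi_{\pm t}(p)\in\mathcal K^\circ$ for such $t$ by invariance of $\mathcal K$. Using transversality of the non-grazing reflections and compactness of $\mathcal K$ (recall that $K$, hence $\mathcal K$, is assumed compact), this can be made uniform: there is $\tau_0>0$ such that for every $p\in\mathcal K\setminus\mathcal K^\circ$ one has $\phi_{-t}(p)\in\mathcal K^\circ$ for all $t\in(0,\tau_0)$.

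Next I would define the splitting. On $\mathcal K^\circ$ set $\mathcal E_{s/u}(p):=\mathrm d(\pi|_{\mathring M})_{(x,v)}E_{s/u}(x,v)$, where $(x,v)$ is the unique point of $K\cap\mathring M$ with $\pi(x,v)=p$. Differentiating the intertwining relation \eqref{eq:flowcompat} at $t=0$ shows that $\mathrm d\pi$ sends the geodesic vector field on $\mathring M$ to the generator $\mathbf X$ of $\phi$, so the isomorphism $\mathrm d(\pi|_{\mathring M})$ transports the decomposition \eqref{eq:hyperbolic_splitting} to \eqref{eq:open_splitting} over $\mathcal K^\circ$; moreover, since $\varphi$ is smooth between transversal non-grazing collisions, differentiating \eqref{eq:flowcompat} gives $\mathrm d\phi_t\circ\mathrm d\pi=\mathrm d\pi\circ\mathrm d\varphi_t$ over $\mathring M$, and combining this with the flow-invariance built into \Cref{def:hypbilliard} yields that $\mathrm d\phi_t$ maps $\mathcal E_{s/u}(q)$ onto $\mathcal E_{s/u}(\phi_t(q))$ whenever $q,\phi_t(q)\in\mathcal K^\circ$. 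For $p\in\mathcal K\setminus\mathcal K^\circ$ I then put $\mathcal E_{s/u}(p):=\mathrm d\phi_\tau\big(\phi_{-\tau}(p)\big)\,\mathcal E_{s/u}\big(\phi_{-\tau}(p)\big)$ for any $\tau\in(0,\tau_0)$; independence of the choice of $\tau$ follows from the cocycle property of $\mathrm d\phi$ together with the invariance on $\mathcal K^\circ$ just stated (note that $\phi_{-t}(p)\in\mathcal K^\circ$ for \emph{all} $t\in(0,\tau_0)$). Since $\mathrm d\phi_\tau$ is a linear isomorphism preserving $\mathbb R\cdot\mathbf X$, this produces a flow-invariant splitting as in \eqref{eq:open_splitting} on all of $\mathcal K$.

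For the exponential bounds I would work on $TM$ near $K$ with the continuous norm $\|W\|_{(x,v)}:=\|\mathrm d\pi_{(x,v)}W\|_{T\mathcal M}$ pulled back from any continuous norm on $T\mathcal M$, which is legitimate since \Cref{def:hypbilliard} permits any continuous norm on $TM$. With this choice the estimates \eqref{eq:def_hyperbolicity} translate verbatim into \eqref{eq:open_hyperbolic} for $p\in\mathcal K^\circ$ and times $t\ge0$ with $\phi_t(p)\in\mathcal K^\circ$. A general $p\in\mathcal K$ and general $t\ge0$ reduce to this case by replacing $p$ with $\phi_{-\tau}(p)\in\mathcal K^\circ$ for some $\tau\in[0,\tau_0)$ and $t$ with a slightly larger $t'$ satisfying $\phi_{t'}(p)\in\mathcal K^\circ$ and $t'-t<\tau_0$; the resulting discrepancy is bounded by the finite constant $\sup\{\|\mathrm d\phi_s(q)\|_\mathrm{op}\mid q\in\mathcal K,\ |s|\le\tau_0\}$ (finite by smoothness of $\phi$ and compactness of $\mathcal K$), which is absorbed into $C_0$. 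Continuity of $p\mapsto\mathcal E_{s/u}(p)$ on $\mathcal K$ is clear on the open subset $\mathcal K^\circ$, and at $p\in\mathcal K\setminus\mathcal K^\circ$ it follows by fixing $\tau\in(0,\tau_0)$ and writing, for $p_n\to p$, $\mathcal E_{s/u}(p_n)=\mathrm d\phi_\tau(\phi_{-\tau}(p_n))\,\mathcal E_{s/u}(\phi_{-\tau}(p_n))$ — valid once $\phi_{-\tau}(p_n)\in\mathcal K^\circ$, which holds for large $n$ because $\phi_{-\tau}(p_n)\to\phi_{-\tau}(p)\in\mathcal K^\circ$ — and then passing to the limit using continuity of $\phi$ and of $\mathcal E_{s/u}|_{\mathcal K^\circ}$.

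The step I expect to be the main obstacle is exactly the reflection set: \Cref{def:hypbilliard} postulates the hyperbolic structure only on $K\cap\mathring M$, whereas $\mathcal K$ genuinely contains the $\pi$-images of the boundary points of $K$ (e.g.\ bouncing closed orbits). The argument hinges on the two elementary facts isolated in the first paragraph — that along any trapped orbit these boundary points occur at a uniformly discrete set of times, and that the model flow $\phi$, unlike $\varphi$, is a genuine smooth flow — which together let one propagate both the invariant splitting and the exponential estimates across $\mathcal K\setminus\mathcal K^\circ$ at only a bounded cost that is absorbed into the hyperbolicity constants.
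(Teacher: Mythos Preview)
Your proof is correct and follows essentially the same approach as the paper: push forward the splitting via the diffeomorphism $\pi|_{\mathring M}$ and extend it across the gluing region $\mathcal K\setminus\mathcal K^\circ=\mathcal K\cap\mathcal G$ using that $\phi$ is a genuine smooth flow. The only noteworthy difference is in how the hyperbolicity estimates are pushed across $\mathcal G$: you invoke compactness of $\mathcal K$ to obtain a uniform $\tau_0$ and a uniform bound on $\|\mathrm d\phi_s\|_{\mathrm{op}}$ that gets absorbed into $C_0$, whereas the paper simply takes the limit $t\to 0$ in the inequality $\|\mathrm d\phi_{t'}(p)W\|\le C_0 e^{-C_1(t'-t)}\|\mathrm d\phi_t(p)W\|$ (valid for small $t>0$ with $\phi_t(p)\in\mathcal K^\circ$), which recovers the estimate with the \emph{same} constants and without any compactness hypothesis. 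Since compactness of $K$ is not part of the statement of \Cref{thm:hypsmooth}, the paper's limit argument is slightly cleaner here, but both reach the same conclusion.
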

\begin{proof}
	We construct the hyperbolic splitting over $\mathcal{K}$ as follows: On $\mathring{M}$ the natural candidate is the one already given in \eqref{eq:hyperbolic_splitting} and transported via the differential of the diffeomorphism $\kappa\defgr \pi|_{\mathring{M}}$, i.e., for $p = \pi(x, v)\in \mathcal{M}\setminus\mathcal{G}$ we have
	\begin{equation} \label{eq:transported_splitting}
	T_{p} \mathcal{M} = \R\cdot \mathbf{X}(p) \oplus \mathcal{E}_s(p) \oplus \mathcal{E}_u(p),
	\end{equation}
	where $\mathcal{E}_{s/u}(\pi(x, v)) \defgr \mathrm{d}\kappa(x, v) E_{s/u}(x, v)$ and $\mathbf{X}(\pi(x, v)) = \mathrm{d}\kappa(x, v) X(x, v)$ is the generator of $\phi$ evaluated at $\pi(x, v)$. This splitting is again invariant under $\phi_t$ whenever $\phi_t(\pi(x, v))\in \mathcal{K}\cap \pi(\mathring{M}) = \mathcal{K}\cap (\mathcal{M} \setminus \mathcal{G})$ by the relation $\phi_t\circ \pi = \pi\circ \varphi_t$ and the flow invariance of the original splitting.
	
	We now extend this splitting to all of $\mathcal{K}$ as follows: For  $p = \pi(x, v)\in \mathcal{K}\cap \mathcal{G}$ we define
	\begin{equation*}
	\mathcal{E}_{s/u}(p) = \mathrm{d}\phi_{-t} \left( \mathcal{E}_{s/u}(\phi_t(p)) \right)
	\end{equation*}
	for any $t\in\R$ such that $\phi_{t}(p)\notin\mathcal{G}$; in particular, any $t\neq 0$ close enough to $0$ will do the job. By the flow property of $\phi$ and the flow invariance of the original splitting \eqref{eq:hyperbolic_splitting} this definition is independent of $t$, \eqref{eq:transported_splitting} holds for $p$ as $\mathrm{d}\phi_{-t}$ is an isomorphism $T_{\phi_t(p)}\mathcal{M}\rightarrow T_p\mathcal{M}$, and the obtained splitting is continuous by continuity of $\phi$.
	
	It remains to show that the hyperbolicity estimates \eqref{eq:def_hyperbolicity} hold. Given $W\in \mathcal{E}_s(p)$ and arbitrary $t'\geq 0$ we calculate
	\begin{equation*}
	\Arrowvert \mathrm{d}\phi_{t'}(p) W\Arrowvert_{\phi_{t'}(p)} \leq C_0 \mathrm{e}^{-C_1 (t' - t)} \Arrowvert \mathrm{d}\phi_t(p) W \Arrowvert_{\phi_t(p)} ,
	\end{equation*}
	where $t>0$ is sufficiently small such that $\phi((0, t]\times \{p\}) \subset\mathcal{M}\setminus \mathcal{G}$. In the limit $t\rightarrow 0$ we obtain the desired estimate.
\end{proof}

\begin{remark} \label{remark:hyp2}
Let $\varphi$ be the non-grazing billiard flow constructed from the Euclidean metric on $\R^n$ and the disjoint union of finitely many compact, connected, strictly convex obstacles with smooth boundaries. Then \Cref{thm:hypsmooth} combined with \Cref{remark:hyp1} shows that any smooth model for $\varphi$ is hyperbolic on its trapped set.
\end{remark}

\section{Construction of a smooth model} \label{construction}
This section is devoted to proving the existence \Cref{thm:smooth_billiard} by explicitly constructing the required objects. In view of the strong uniqueness result \Cref{prop:uniqueness} our construction method is essentially unique. We break up the proof into several lemmas and corollaries until we arrive at the final Corollary \ref{cor:main}. 

\subsection{The topological space $\M$ and continuous flow $\phi$} \label{sec:M}
We first define $\M$ as a topological space and $\phi$ as a continuous flow. In the subsequent Section \ref{sec:smoothstructure} we proceed to proving that $\M$ can be equipped with a smooth structure such that $\phi$ is smooth. 

We define our model space as
\[
\M \defgr (M\setminus \partial_\mathrm{g}M)/\sim,
\]
where the equivalence relation $\sim$ on $M\setminus \partial_\mathrm{g}M$ is defined by the equivalence classes
\[
[x,v]:= \begin{cases}\{(x,v)\},\qquad& (x,v)\in \mathring M,\\
\{(x,v),(x,v')\},& (x,v)\in \partial M.
\end{cases}
\]
We equip $\M$ with the quotient topology and denote by
\bq
\pi: M\setminus \partial_\mathrm{g}M \to \M,\qquad (x,v)\mapsto [x,v],\label{eq:piconstr}
\eq
the canonical projection.  We call the set $\mathcal G:=\pi(\partial M\setminus \partial_\mathrm{g}M)\subset \M$ formed by all $2$-element equivalence classes the \emph{gluing region}.  It is a closed subset of $\M$ since $\pi^{-1}\left(\M\setminus \G\right) = \mathring M$ is open in $M$. As suggested by \Cref{def:smoothmodel}, we define the  domain
\begin{equation*}
\mathcal{D} \defgr (\mathrm{id}_\R\times \pi)(D)\subset \R\times \M,
\end{equation*}
where $D$ is the non-grazing flow domain of $\varphi$ defined in \eqref{eq:D}. The symmetry \eqref{eq:Dinv} of $D$ under tangential reflection  implies that $(\mathrm{id}_\R \times \pi)^{-1}(\mathcal{D}) = D$,  so \Cref{lem:D} implies that  $\mathcal{D}$ is open in $\R\times \M$ as required by  \Cref{def:smoothmodel}. The  compatibility property \eqref{eq:invariance} of $\varphi$  with the tangential reflection now allows us to define a flow
\[
\phi:\mathcal D\to \M,\qquad \phi(t,[x,v]):=[\varphi(t,x,v)],\qquad (t,x,v)\in D,
\]
which by construction satisfies the relation $\phi\circ (\mathrm{id}_\R\times \pi)=\pi\circ\varphi$ on $D$. 

The main motivation for the definition of $\M$ using the equivalence relation $\sim$ is the \emph{continuity} of the flow $\phi$:
\begin{lem}
The flow $\phi:\mathcal D \to \M$ is continuous.
\end{lem}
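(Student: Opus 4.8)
The plan is to verify continuity of $\phi$ locally, reducing to the behaviour near the gluing region $\mathcal{G}$, since away from $\mathcal{G}$ continuity is inherited from smoothness of $\varphi$ on $\varphi^{-1}(\mathring M)$. First I would fix $(t_0,[x_0,v_0])\in\mathcal D$ and consider a net (or sequence, if one wants to invoke metrizability, but nets suffice for the general topological statement) $(t_j,[x_j,v_j])\to(t_0,[x_0,v_0])$ in $\mathcal D$, and show $\phi(t_j,[x_j,v_j])\to\phi(t_0,[x_0,v_0])$ in $\M$. Because $\pi$ is a quotient map and, by \Cref{lem:G}, proper with fibres of size at most two, I can lift the converging net in $\M$ to a converging net in $M\setminus\partial_\mathrm{g}M$: after passing to a subnet, choose representatives $(\tilde x_j,\tilde v_j)$ with $(\tilde x_j,\tilde v_j)\to(\tilde x_0,\tilde v_0)$ for some representative of $[x_0,v_0]$. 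The key point is then that $\phi(t,[x,v])=[\varphi(t,x,v)]$ does not depend on which of the two representatives in a fibre we pick, by \eqref{eq:invariance}, so it is enough to prove $\varphi(t_j,\tilde x_j,\tilde v_j)$ has all of its subnet limits in the single equivalence class $[\varphi(t_0,\tilde x_0,\tilde v_0)]$.

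Next I would split into cases according to where the limit point lies. If $\varphi(t_0,\tilde x_0,\tilde v_0)\in\mathring M$, then since $\varphi^{-1}(\mathring M)$ is open in $\R\times(M\setminus\partial_\mathrm{g}M)$ and $\varphi$ is continuous (indeed smooth) there, we get $\varphi(t_j,\tilde x_j,\tilde v_j)\to\varphi(t_0,\tilde x_0,\tilde v_0)$, whence $\phi$ is continuous at that point. The genuinely new case is when $\varphi(t_0,\tilde x_0,\tilde v_0)\in\partial M\setminus\partial_\mathrm{g}M$, i.e.\ the limit trajectory lands on the boundary at time $t_0$. Here I would exploit \Cref{lem:convexity}: write $\varphi(t_0,\tilde x_0,\tilde v_0)=(y_0,w_0)\in\partial_\mathrm{in}M\sqcup\partial_\mathrm{out}M$, say $\in\partial_\mathrm{in}M$ without loss of generality, and decompose the approach to the boundary. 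Using the flow property \eqref{eq:flowprop}, I can write $(t_j,\tilde x_j,\tilde v_j)$ as a composition $\varphi_{t_j}=\varphi_{s_j}\circ\varphi_{r_j}$ with $r_j+s_j=t_j$ chosen so that $\varphi_{r_j}(\tilde x_j,\tilde v_j)$ lies in a fixed neighbourhood of $(y_0,w_0)$ of the form provided by the open set $N$ from \Cref{lem:convexity}; one may take $r_j=t_j+\delta$, $s_j=-\delta$ (or $r_j=t_j-\delta$, $s_j=\delta$) for a small fixed $\delta>0$, so that $\varphi_{r_j}(\tilde x_j,\tilde v_j)\to\varphi_{r_0}(\tilde x_0,\tilde v_0)\in\mathring M$ by the already-settled interior case, and then $\varphi_{s_j}$ is governed near $(y_0,w_0)$ by the smooth local inverses $\varphi|_{N_\mathrm{in/out}}^{-1}$ and the geodesic flow via \eqref{eq:varphivarphig}.

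The main obstacle, and the point requiring care, is precisely this last step: near a boundary point $(y_0,w_0)\in\partial_\mathrm{in}M$ the map $\varphi$ itself is discontinuous — points approaching $(y_0,w_0)$ from one side of $\partial M$ get sent to the reflected side — but the composition with $\pi$ collapses exactly this discontinuity. So I would argue that for $(t,x,v)$ in a small neighbourhood of $(t_0,\tilde x_0,\tilde v_0)$, $\varphi(t,x,v)$ lies in $\varphi(N_\mathrm{in})\cup\varphi(N_\mathrm{out})$, both of which are open by \Cref{lem:convexity}(iii), and that $\pi\circ\varphi$ restricted to each piece is continuous because it equals $\pi$ composed with a geodesic-flow expression by \eqref{eq:varphivarphig} and $\pi$ glues together the two pieces along $\mathcal G$ consistently; concretely, $\pi\circ\varphi|_{N_\mathrm{in}}$ and $\pi\circ\varphi|_{N_\mathrm{out}}$ agree on the overlap of their domains because of \eqref{eq:invariance}, and each is continuous on its (open, boundaryless) domain, so they patch to a continuous map. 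Passing back through the quotient, $\phi=\pi\circ\varphi\circ(\mathrm{id}_\R\times\pi|_{\mathring M})^{-1}$ near interior points and the above patching near $\mathcal G$ gives continuity of $\phi$ everywhere on $\mathcal D$. I would finish by noting that since $\pi$ is a quotient map, it suffices to check that $\phi\circ(\mathrm{id}_\R\times\pi)=\pi\circ\varphi$ is continuous as a map $D\to\M$ into each of the finitely many open pieces covering $\M$, which is exactly what the case analysis above establishes; no delicate estimate beyond continuity of $\varphi^g$ and openness of $N$, $\varphi(N_\mathrm{in/out})$ is needed.
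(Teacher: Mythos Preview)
Your argument is correct in outline but substantially more laborious than the paper's. The paper dispenses with nets, lifts, and case analysis entirely: for an arbitrary open $\mathcal O\subset\M$, it observes that $O:=\pi^{-1}(\mathcal O)$ is open and automatically reflection-symmetric on $\partial M\setminus\partial_\mathrm{g}M$ (by the very definition of $\pi$), so \Cref{lem:D} gives that $\varphi^{-1}(O)$ is open in $D$; since $(\mathrm{id}_\R\times\pi)^{-1}(\phi^{-1}(\mathcal O))=\varphi^{-1}(O)$ and $\mathrm{id}_\R\times\pi:D\to\mathcal D$ is a quotient map, $\phi^{-1}(\mathcal O)$ is open. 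That is the whole proof.

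What you are doing with the net argument and the local analysis via \Cref{lem:convexity} is, in effect, re-proving the relevant piece of \Cref{lem:D} on the fly rather than invoking it. Your final paragraph actually lands on the right observation --- continuity of $\phi$ reduces to continuity of $\pi\circ\varphi$ via the quotient map --- but you then verify the latter by hand instead of noticing that $(\pi\circ\varphi)^{-1}(\mathcal O)=\varphi^{-1}(\pi^{-1}(\mathcal O))$ is open directly by \Cref{lem:D}. Two minor caveats: invoking \Cref{lem:G} for properness of $\pi$ is premature at this point in the construction (that lemma concerns abstract smooth models and is proved later), though for the explicit quotient here the fibre description and properness are immediate from the definition; and your decomposition $\varphi_{t_j}=\varphi_{s_j}\circ\varphi_{r_j}$ near a boundary hit is a bit sketchy as written --- it works, but making it precise is exactly the content of the proof of \Cref{lem:D} in the appendix.
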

\begin{proof}
Given an open set $\mathcal{O} \subset \mathcal{M}$ we first note that $O\defgr \pi^{-1}(\mathcal{O}) \subset M\setminus \partial_\mathrm{g} M$ is open by continuity of $\pi$. Since $O\cap (\partial M\setminus \partial_\mathrm{g} M)$ is reflection-symmetric in view of the definition of $\pi$, \Cref{lem:D} tells us that $\varphi^{-1}(O)$ is open in $D$. Now we simply calculate
\begin{equation}
(\mathrm{id}_\mathbb{R} \times \pi)^{-1} (\phi^{-1}(\mathcal{O})) = (\pi\circ \varphi)^{-1}(\mathcal{O}) = \varphi^{-1}(O) ,
\end{equation}
which by definition of the quotient topology shows that $\phi^{-1}(\mathcal O)$ is open in $\mathcal D$.
\end{proof}

\subsection{Smooth structure}\label{sec:smoothstructure} The topological gluing process carried out in Section \ref{sec:M} to define $\M$ does not automatically equip $\M$ with any canonical smooth structure. However, since our goal is to make $\varphi$ smooth, it suggests itself to use \emph{flow charts} around the gluing region in $\M$ to define the smooth structure. 

More precisely, to equip $\M$ with a smooth structure we choose an open set $N = N_\mathrm{in}\sqcup N_\mathrm{out} \subset \R\times (\partial M\setminus \partial_\mathrm{g}M)$ as in Lemma \ref{lem:convexity} and define the continuous map 
\[
\Phi:N_\mathrm{in}\to \M,\qquad \Phi(t,x,v)=[\varphi(t,x,v)]=\phi(t,[x,v]).
\]
Note that choosing $N_\mathrm{in}$ over $N_\mathrm{out}$ is arbitrary; $N_\mathrm{out}$ defines an equivalent smooth structure in the arguments below since the involution $N_\mathrm{in}\to N_\mathrm{out}$, $(t,x,v)\mapsto (t,x,v')$,  is a canonical diffeomorphism between the two. The key observation is that $\Phi$ is an embedding:
\begin{cor}\label{cor:U}
The map $\Phi$ is a homeomorphism onto its image which is an open neighborhood of the gluing region $\G$.
\end{cor}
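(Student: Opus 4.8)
The plan is to verify separately that $\Phi$ is (a) continuous, (b) injective, (c) open onto its image, and that its image is an open neighborhood of $\G$; continuity is already recorded since $\Phi=\phi\circ(\mathrm{id}_\R\times\pi)|_{N_\mathrm{in}}$ is a composition of the continuous maps $\mathrm{id}_\R\times\pi$, $\phi$ (continuous by the preceding lemma), and the inclusion $N_\mathrm{in}\hookrightarrow\mathcal D$ (the inclusion $N\subset D$ is part of Lemma \ref{lem:convexity} i)). So the real work is injectivity, openness, and surjectivity onto a neighborhood of $\G$.

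For injectivity, suppose $\Phi(t_1,x_1,v_1)=\Phi(t_2,x_2,v_2)$ with both points in $N_\mathrm{in}$. Unwinding the definition of $\pi$, this means $\varphi(t_1,x_1,v_1)$ and $\varphi(t_2,x_2,v_2)$ are either equal or related by tangential reflection. Using Lemma \ref{lem:convexity} iv), the restriction $\varphi|_{N_\mathrm{in}}$ is injective into $M\setminus\partial_\mathrm g M$, so it suffices to rule out the genuinely-reflected case. If $\varphi(t_2,x_2,v_2)=(y,w)$ and $\varphi(t_1,x_1,v_1)=(y,w')$ with $(y,w)\neq(y,w')$, then both lie on $\partial M\setminus\partial_\mathrm g M$, one in $\partial_\mathrm{in}M$ and one in $\partial_\mathrm{out}M$; I would then use Lemma \ref{lem:convexity} vi) — which pins down whether $\varphi_t=\varphi^g_t$ or $\varphi^g_t\circ R$ depending on the sign of $t$ and the inward/outward type — together with transversality of $\varphi^g$ to $\partial M\setminus\partial_\mathrm g M$ to force $t_1,t_2$ to have opposite signs and to produce a contradiction with $N_\mathrm{in}$ being a subset of $\R\times\partial_\mathrm{in}M$ only. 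Concretely: since both source points are inward, the $N_\mathrm{in}^-$ versus $N_\mathrm{in}\setminus N_\mathrm{in}^-$ dichotomy of vi) forces one of $\varphi(t_i,x_i,v_i)$ to be a pure geodesic evolution of an inward vector (hence, near the boundary, still heading inward or having just re-emerged) and the other a reflected one, and matching these at the same base point $y$ with reflected fibre vectors is exactly what vi) together with the fibrewise reflection rule $v'=v-2g_x(v,n_x)n_x$ forbids unless $t_1=t_2=0$, in which case $(x_1,v_1)=(y,w')$ and $(x_2,v_2)=(y,w)$ are reflections of each other — but both are assumed inward, a contradiction. This rules out the reflected case, giving injectivity.

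For openness and the image being a neighborhood of $\G$, I would argue as follows. By Lemma \ref{lem:convexity} iii), $\varphi|_N$ is open, so $\varphi(N_\mathrm{in})$ is open in $M\setminus\partial_\mathrm g M$; moreover by part ii) the set $N$ is reflection-symmetric, hence (since $N=N_\mathrm{in}\sqcup N_\mathrm{out}$ and the fibrewise reflection swaps $\partial_\mathrm{in}M\leftrightarrow\partial_\mathrm{out}M$ and commutes with $\varphi$ by \eqref{eq:invariance}) one checks $\varphi(N_\mathrm{in})\cup\varphi(N_\mathrm{out})=\varphi(N)$ is reflection-symmetric and $\pi^{-1}(\Phi(N_\mathrm{in}))=\pi^{-1}(\pi(\varphi(N_\mathrm{in})))=\varphi(N_\mathrm{in})\cup\varphi(N_\mathrm{out})$, which is open; by definition of the quotient topology this shows $\Phi(N_\mathrm{in})=\pi(\varphi(N_\mathrm{in}))$ is open in $\M$. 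The same reasoning applied to any open $V\subset N_\mathrm{in}$ — after noting $V\cup R(V)$ (with $R$ the reflection on $N$) is open and reflection-symmetric and equals $\varphi^{-1}$ of nothing but rather satisfies $\pi^{-1}(\Phi(V))=\varphi(V)\cup\varphi(R(V))$ open — shows $\Phi$ is an open map onto $\M$, hence a homeomorphism onto its open image once combined with continuity and injectivity. Finally, $\G=\pi(\partial M\setminus\partial_\mathrm g M)$ is contained in $\Phi(N_\mathrm{in})$: for $[x,v]\in\G$ with $(x,v)\in\partial_\mathrm{in}M$ we have $(0,x,v)\in N_\mathrm{in}$ by Lemma \ref{lem:convexity} i) and $\Phi(0,x,v)=[\varphi_0(x,v)]=[x,v]$, so the image of $\Phi$ is an open set containing $\G$, i.e., an open neighborhood of $\G$.

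I expect the main obstacle to be the injectivity step — specifically, carefully excluding the "reflected coincidence'' case $\varphi(t_1,x_1,v_1)=R(\varphi(t_2,x_2,v_2))$ using the sign-dependent identification of $\varphi$ with $\varphi^g$ (resp.\ $\varphi^g\circ R$) from Lemma \ref{lem:convexity} vi) and transversality of the geodesic flow to the non-grazing boundary; the openness and neighborhood claims are then formal consequences of the quotient-topology bookkeeping and parts ii), iii) of Lemma \ref{lem:convexity}.
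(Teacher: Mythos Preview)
Your proposal is correct and follows the same strategy as the paper: factor $\Phi=\pi\circ\varphi|_{N_\mathrm{in}}$, then invoke Lemma~\ref{lem:convexity} for the properties of $\varphi|_{N_\mathrm{in}}$ and handle the passage through the quotient $\pi$ separately. Two comments on the execution.

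Your injectivity argument is more laborious than necessary. The cleanest route is to observe that $\varphi(N_\mathrm{in})\cap(\partial M\setminus\partial_\mathrm{g}M)\subset\partial_\mathrm{in}M$ (since by Lemma~\ref{lem:convexity}~vi) a point $\varphi_t(x,v)$ with $(t,x,v)\in N_\mathrm{in}$ hits the boundary only at $t=0$, where it equals $(x,v)\in\partial_\mathrm{in}M$). Hence $\varphi(N_\mathrm{in})$ contains no reflected pair $\{(y,w),(y,w')\}$, so $\pi|_{\varphi(N_\mathrm{in})}$ is injective, and composing with the injectivity of $\varphi|_{N_\mathrm{in}}$ (Lemma~\ref{lem:convexity}~iv)) gives injectivity of $\Phi$ directly. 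Your case analysis via the sign dichotomy ultimately reaches the same contradiction but takes a detour.

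Your openness argument, on the other hand, is more careful than the paper's. The paper simply asserts that ``$\pi|_{\varphi(N_\mathrm{in})}$ is an open map'', which is not literally true: the set $W=\varphi(N_\mathrm{in}^-)$ is open in $\varphi(N_\mathrm{in})$, yet $\pi^{-1}(\pi(W))=W\cup\partial_\mathrm{out}M$ is not open in $M\setminus\partial_\mathrm{g}M$. What \emph{is} true, and what you correctly prove, is that for $V\subset N_\mathrm{in}$ open one has $\pi^{-1}(\Phi(V))=\varphi(V)\cup\varphi(\widetilde R(V))=\varphi(V\cup\widetilde R(V))$, which is open by Lemma~\ref{lem:convexity}~iii) since $V\cup\widetilde R(V)$ is open in $N$. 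So your quotient-topology bookkeeping is the right way to close this step.
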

\begin{proof}
The set $\Phi(N_\mathrm{in})$ contains $\G$ because $N_\mathrm{in}$ contains $\{0\}\times  \partial_\mathrm{in}M$ and $\pi(\{0\}\times  \partial_\mathrm{in}M)= \G$. Since we have $\Phi(N_\mathrm{in}) = \pi(\varphi(N_\mathrm{in}))$ and $\pi|_{\varphi(N_\mathrm{in})}$ is an open map, we only need to prove that $\varphi: N_\mathrm{in}\to M$ is an injective open map. This is true by Lemma \ref{lem:convexity}.
\end{proof}

\begin{cor}\label{cor:main}
The model space $\M$ can be equipped with a smooth structure such that the projection $\pi:M\setminus \partial_\mathrm{g} M\to \M$ is a smooth map and the flow $\phi: \mathcal D\to\M$ is smooth. Furthermore, there exists a contact form $\alpha_\mathcal{M}$ on $\mathcal{M}$ whose Reeb vector field is the generator $\mathbf{X}$ of $\phi$.
\end{cor}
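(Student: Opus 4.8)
The plan is to build the smooth structure on $\M$ from two compatible coordinate sources: the diffeomorphism $\pi|_{\mathring M}$ away from the gluing region, and the flow chart $\Phi: N_\mathrm{in}\to\M$ from Corollary \ref{cor:U} near $\G$. First I would declare a subset $\mathcal U\subset\M$ to be open-and-charted if it lies in one of the two sets $\pi(\mathring M)$ or $\Phi(N_\mathrm{in})$, using as coordinate map on the first the inverse of $\pi|_{\mathring M}$ composed with any chart of $M$, and on the second the inverse of $\Phi$ (which is a homeomorphism onto an open neighborhood of $\G$ by Corollary \ref{cor:U}) composed with any chart of $N_\mathrm{in}\subset\R\times(\partial M\setminus\partial_\mathrm gM)$. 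Since $\pi(\mathring M)=\M\setminus\G$ and $\Phi(N_\mathrm{in})$ is an open neighborhood of $\G$, these two sets cover $\M$, so the only thing to check is that the transition map on the overlap is smooth. On $\pi(\mathring M)\cap\Phi(N_\mathrm{in})$ the transition map is, after composing with the local diffeomorphism $\pi|_{\mathring M}$, exactly the restriction of $\varphi$ to $N_\mathrm{in}\cap\varphi^{-1}(\mathring M)$ (and its inverse), which is smooth because $\varphi$ is smooth on $\varphi^{-1}(\mathring M)$ as noted after Lemma \ref{lem:D}; the inverse direction is handled by Lemma \ref{lem:convexity}(v), which gives smoothness of $\varphi|_{N_\mathrm{in}}^{-1}$. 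This makes $\M$ a smooth manifold.

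Next I would verify the three asserted properties. Smoothness of $\pi$: on $\mathring M$ it is a diffeomorphism onto $\M\setminus\G$ by construction, and near a boundary point $(x,v)\in\partial_\mathrm{in}M$ we have in the flow chart $\Phi^{-1}\circ\pi(x,v)=(0,x,v)$, so $\pi$ reads locally as the smooth inclusion $\partial_\mathrm{in}M\hookrightarrow\{0\}\times\partial_\mathrm{in}M\subset N_\mathrm{in}$; for $(x,v')\in\partial_\mathrm{out}M$ one uses $\pi(x,v')=\pi(x,v)=\Phi(0,x,v)$ together with the canonical diffeomorphism $N_\mathrm{out}\to N_\mathrm{in}$, $(t,x,v')\mapsto(t,x,v)$, to see $\pi$ is smooth there too, and on $\mathring M\cap$(domain of $\Phi$) the two descriptions agree by the transition-map computation above. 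Smoothness of $\phi$: in the flow chart the relation $\phi(t',\Phi(t,x,v))=\Phi(t+t',x,v)$ holds wherever defined by the flow property \eqref{eq:flowprop} (the reflection ambiguity disappears after applying $\pi$), so $\phi$ is locally just $(t',(t,x,v))\mapsto(t+t',x,v)$ in these coordinates near $\mathcal D\cap(\R\times\G)$, which is manifestly smooth; away from $\G$, $\phi$ corresponds via $\pi|_{\mathring M}$ to $\varphi$ on $\varphi^{-1}(\mathring M)$, again smooth. One should also check $\mathcal D=(\mathrm{id}_\R\times\pi)(D)$ is open, but this was already recorded right after \eqref{eq:piconstr} using \eqref{eq:Dinv} and Lemma \ref{lem:D}.

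Finally, for the contact form: on $\mathring M$ push forward the restriction $\alpha|_{\mathring M}$ of the canonical contact form on $S\Sigma$ via $\pi|_{\mathring M}$ to get a contact form on $\M\setminus\G$; the content of Lemma \ref{lem:alphainv} is that $\alpha|_{\partial M}$ is invariant under the reflection $R$, so the form $\alpha|_{M\setminus\partial_\mathrm gM}$ descends to a well-defined $1$-form $\alpha_\mathcal M$ on all of $\M$, and it remains to see this $\alpha_\mathcal M$ is smooth across $\G$ and is contact there, with Reeb field $\mathbf X$. I would do this in the flow chart: on $N_\mathrm{in}$ the pullback $\Phi^*\alpha_\mathcal M$ equals the pullback of $\alpha$ (on $S\Sigma$) along $\varphi|_{N_\mathrm{in}}$, which by Lemma \ref{lem:convexity}(vi) is a piecewise composition of $\varphi^g_t$ (and the reflection) with the smooth embedding $N_\mathrm{in}\hookrightarrow S\Sigma$; since the geodesic flow $\varphi^g$ preserves $\alpha$ and the reflection fixes $\alpha|_{\partial M}$ by Lemma \ref{lem:alphainv}, the two pieces of the formula glue to a single smooth expression on $N_\mathrm{in}$, namely (up to the identification) $\mathrm dt\wedge(\cdots)+$ the pulled-back boundary form, from which smoothness and the contact condition $\alpha_\mathcal M\wedge(\mathrm d\alpha_\mathcal M)^{n-1}\neq0$ follow, and the Reeb equations $\alpha_\mathcal M(\mathbf X)=1$, $\iota_{\mathbf X}\mathrm d\alpha_\mathcal M=0$ hold because $\mathbf X=\partial_t$ in the flow chart and $\varphi^g$'s generator is the Reeb field of $\alpha$. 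The main obstacle is precisely this last point: checking that the two branches of \eqref{eq:varphivarphig} produce the \emph{same} smooth $1$-form on $N_\mathrm{in}$, i.e. that the naive formula $\Phi^*\alpha_\mathcal M$ — which a priori looks only piecewise smooth across $\{0\}\times\partial_\mathrm{in}M$ — is in fact smooth; this is where Lemma \ref{lem:alphainv} does the essential work, matching the two one-sided limits along the gluing hypersurface.
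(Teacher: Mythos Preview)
Your proposal is correct and follows essentially the same route as the paper: build the atlas from $\pi|_{\mathring M}$ and the flow chart $\Phi$, check compatibility on the overlap via smoothness of $\varphi$ on $\varphi^{-1}(\mathring M)$ and Lemma~\ref{lem:convexity}(v), verify $\phi$ is a translation in the flow chart, and extend the pushed-forward contact form across $\G$ using the reflection invariance of Lemma~\ref{lem:alphainv}.

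One place where the paper's formulation is tighter than yours is the smoothness of $\pi$ near the boundary. You write $\Phi^{-1}\circ\pi(x,v)=(0,x,v)$ for $(x,v)\in\partial_\mathrm{in}M$, but this only records the value on the boundary hypersurface itself, not on an open neighborhood in the manifold with boundary $M\setminus\partial_\mathrm{g}M$; checking smoothness on $\mathring M$ and on $\partial M\setminus\partial_\mathrm{g}M$ separately and then saying the descriptions ``agree on the overlap'' does not by itself give smoothness at boundary points. The paper handles this in one stroke by observing that on the full open neighborhood $\varphi(N_\mathrm{in})\subset M\setminus\partial_\mathrm{g}M$ one has $\Phi^{-1}\circ\pi|_{\varphi(N_\mathrm{in})}=(\varphi|_{N_\mathrm{in}})^{-1}$, which is smooth by Lemma~\ref{lem:convexity}(v). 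Your treatment of the contact form is the same argument as the paper's (the piecewise formula $(\Phi')^\ast\alpha_\M = \mathrm{d}t + \alpha|_{\partial M\setminus\partial_\mathrm{g}M}$ for $t<0$ versus $\mathrm{d}t + R^\ast(\alpha|_{\partial M\setminus\partial_\mathrm{g}M})$ for $t>0$, glued via Lemma~\ref{lem:alphainv}); note the pullback is $\mathrm{d}t + \alpha|_{\partial M\setminus\partial_\mathrm{g}M}$, not a wedge product as you wrote.
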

\begin{proof}
At this point we have at our disposal the two homeomorphism $\Phi:N_\mathrm{in} \to \Phi(N_\mathrm{in})$ and $\pi|_{\mathring M}: \mathring M\to \M\setminus \G$ whose codomains provide an open cover of $\mathcal M$. Since $\mathring M$ as well as $N_\mathrm{in}$ are smooth manifolds, we see that $\M$ is second-countable and Hausdorff. To equip $\M$ with an atlas we take on $\M\setminus \G$ the diffeomorphism $\pi|_{\mathring M}^{-1}$ as a chart and on $\Phi(N_\mathrm{in})$ we use $\Phi^{-1}$ as a chart. As $\varphi$ is smooth on $\varphi^{-1}(\mathring M)$, the so-defined charts are compatible on the overlap $\Phi(N_\mathrm{in})\cap (\M\setminus \G)$ and thus define a smooth structure on $\M$.  

Now $\pi|_{\mathring M}$ is a diffeomorphism and in particular smooth. On the other hand  $\pi|_{\varphi(N_\mathrm{in})}$ is smooth if $\Phi^{-1}\circ \pi|_{\varphi(N_\mathrm{in})} = (\varphi|_{N_\mathrm{in}})^{-1}: \varphi(N_\mathrm{in})\to N_\mathrm{in}$ is smooth. The latter holds true by Lemma \ref{lem:convexity} \emph{v)}. 

Since $\phi$ is a continuous flow and $\{\Phi(N_\mathrm{in}), \M\setminus \G\}$ constitutes an open cover of $\M$, proving that $\phi$ is smooth reduces to showing that both $\phi: \mathcal{D} \cap \left(\R\times (\M\setminus \G) \right) \to \M$ as well as $\phi: \mathcal{D}\cap \left(\R\times (\Phi(N_\mathrm{in})) \right) \to \M$ are smooth. Note that by the flow property one only needs to check smoothness around points $(0, [x, v])$, i.e., the problem reduces to checking smoothness in the cases $[x, v]\in \pi(\mathring{M})$ and $[x, v]\in\mathcal{G}\subset \Phi(N_\mathrm{in})$.

The former easily follows from the smoothness of $\varphi$ on $\varphi^{-1}(\mathring M)$. For the latter we take $(s, y, w)\in N_\mathrm{in}$ and calculate for sufficiently small $t\in\R$
\begin{equation} \label{eq_flow_in_coord}
\Phi^{-1}\circ \phi\circ \left(\mathrm{id}_\R\times \Phi \right) (t, s, y, w) = \Phi^{-1}\circ\phi_t(\phi_s(y, w)) = (t + s, y, w) ,
\end{equation}
by virtue of the flow property $\varphi(t, \varphi(s,x,v))=\varphi(s+t,x,v)$. This is obviously smooth.

We begin the construction of $\alpha_\mathcal{M}$ by noting that in the setting of \Cref{lem:convexity} the geodesic flow $\varphi^g$ is a diffeomorphism $N\rightarrow M\setminus\partial_\mathrm{g} M$ and the map $\Phi$ provides a chart around $\mathcal{G}$ with respect to which the generator of $\phi$ is given by $\Phi_* \mathbf{X} = \partial_t$.

Now let $\alpha\in \Omega^1(S\Sigma)$ be the canonical contact form on $(S\Sigma, g)$ whose Reeb vector field is the geodesic vector field $X^g$ (for details see \cite[Chap.~1]{Paternain.1999}). Note that the equation $\phi\circ (\mathrm{id}_\R\times \pi) = \pi\circ\varphi$ on $D$ immediately entails $\mathbf{X} = \pi_* X^g$ on $\pi(\mathring{M}) = \mathcal{M}\setminus \mathcal{G}$ and the $1$-form defined via
\begin{equation*}
\alpha_\mathcal{M} \defgr \Big( \pi\big|_{\mathring{M}}^{-1} \Big)^* \alpha \in \Omega^1(\pi(\mathring{M}))
\end{equation*}
thus still satisfies $\iota_\mathbf{X}\alpha_\mathcal{M} = 1$ and $\iota_\mathbf{X} \mathrm{d}\alpha_\mathcal{M} = 0$. We will now continue this definition smoothly to $\mathcal{G}$: First observe that $\alpha$ is $\varphi_t^g$-invariant and $(\varphi^g|_N)_* X^g = \partial_t$, ergo
\begin{equation} \label{eq_contact1}
\left( \varphi^g\big|_N \right)^* \alpha(t, x, v) = \mathrm{d}t + \alpha|_{\partial M \setminus\partial_\mathrm{g} M}(x, v).
\end{equation}
Denote the restriction away from $\mathcal{G}$ of our above flow chart as $\Phi' \defgr \Phi\big|_{\{t\neq 0\}}$, where $\{t\neq 0\} = \Phi^{-1}(\mathcal{M}\setminus \mathcal{G}) = N\setminus (\{0\}\times \mathcal{G})$. We first observe that
\begin{equation} \label{eq_contact2}
(\Phi')^* \alpha = \left( \pi\big|_{\mathring{M}}^{-1}\circ \Phi' \right)^* \alpha = \left( \varphi\big|_{N_\mathrm{in}\cap \{t\neq 0\}} \right)^* \alpha .
\end{equation}
But $\varphi|_{N_\mathrm{in}\cap \{t < 0\}} = \varphi^g|_{N_\mathrm{in}\cap \{t < 0\}}$ and $\varphi|_{N_\mathrm{in}\cap \{t > 0\}} = \varphi^g\circ \widetilde{R}|_{N_\mathrm{in}\cap \{t > 0\}}$ where $\widetilde{R}(t, x, v) = R(t, x, v')$ denotes the obvious lift of the tangential reflection  $R(x, v) = (x, v')$ to $N$. Combining this with \eqref{eq_contact1} and \eqref{eq_contact2} yields
\begin{equation} \label{eq_contact3}
\left( \Phi' \right)^* \alpha_\mathcal{M}(t, x, v) =
\begin{cases}
\mathrm{d}t + \alpha|_{\partial M \setminus\partial_\mathrm{g} M}(x, v), &t < 0\\
\mathrm{d}t + R^* (\alpha|_{\partial M \setminus\partial_\mathrm{g} M})(x, v), &t > 0 .
\end{cases}
\end{equation}
We have already seen in \Cref{lem:alphainv} that $R^* (\alpha|_{\partial M})=\alpha|_{\partial M}$ which implies that we can interpret the right-hand side of \eqref{eq_contact3} as defined on the whole coordinate domain $N_\mathrm{in}$ and the definition $\alpha_\mathcal{M}\defgr \left( \Phi^{-1} \right)^* (\mathrm{d}t + \alpha|_{\partial M \setminus\partial_\mathrm{g} M})$ extends $\alpha_\mathcal{M}$ to a well-defined $1$-form on all of $\mathcal{M}$ which is still a contact form with $\mathbf{X}$ its Reeb vector field by \eqref{eq_contact3}.
\end{proof}

\begin{remark}[Flow time vs.\ Riemannian distance as transversal coordinate]
We emphasize the fact that the particularly simple coordinate expression of the flow in \eqref{eq_flow_in_coord} is due to the usage of \emph{flow coordinates} in the direction transversal to  $\G$ or, equivalently, to $\partial M\setminus \partial_\mathrm{g} M$. Alternatively one could consider the Riemannian distance $\mathrm{dist}_g(x,\Omega)$ as the transversal coordinate of a point $(x,v)\in M\setminus \partial_\mathrm{g} M$ close to $\partial_\mathrm{out} M$ and $-\mathrm{dist}_g(x,\Omega)$ if  $(x,v)$ is close to $\partial_\mathrm{in} M$. With this choice of coordinates on $\M$ near $\G$ the model flow $\phi$ would in general be non-smooth, though, as can be directly verified for e.g.\ $\Sigma=\R^2$, $\Omega=\{x\in \mathbb{R}^2 \,|\, \vert x\vert \leq 1\}$, equipped with the Euclidean metric.
\end{remark}

\subsection{Dependence of the model space on the Riemannian metric}

Suppose that $g$ and $g'$ are two complete Riemannian metrics on $\Sigma$. Then the unit tangent bundles with respect to $g$ and $g'$ are canonically diffeomorphic via the obvious rescaling diffeomorphism, so we can consider them as one and the same space $S\Sigma$ carrying the two geodesic flows $\varphi^g$ and $\varphi^{g'}$.  With this identification the  inward, outward, and grazing boundaries of $M$ are the same for the two metrics $g$ and $g'$. However, the tangential reflections on $\partial M\setminus \partial_\mathrm{g} M$ with respect to $g$ and $g'$ will differ in general. Let us denote them by 
\[
(x,v)\mapsto (x,R_{g(x)}v),\qquad (x,v)\mapsto (x,R_{g'(x)}v),
\]
respectively. Consider now the non-grazing billiard flows $\varphi_g:D_g\to M\setminus \partial_\mathrm{g}M$ and $\varphi_{g'}:D_{g'}\to M\setminus \partial_\mathrm{g}M$ of $g$ and $g'$ on their domains $D_g,D_{g'}\subset \R \times (M\setminus \partial_\mathrm{g}M)$.  It is a natural question how the smooth models $(\M_g,\pi_g,\phi_g)$ and $(\M_{g'},\pi_{g'},\phi_{g'})$ for $\varphi_{g}$ and $\varphi_{g'}$, as constructed above,  are related. In particular, we would like to know when there is a diffeomorphism $\M_g\cong \M_{g'}$ making the diagram
\bq
\begin{tikzcd}
& M\setminus \partial_\mathrm{g}M  \arrow{ld}{\pi_g}\arrow{rd}{\pi_{g'}} & \\%
\M_g \arrow{rr}{\cong}& & \M_{g'}
\end{tikzcd}\label{eq:diffpi}
\eq
commute. In this case, one can consider  $\phi_{g}$ and $\phi_{g'}$ as flows on the same smooth manifold, which allows to compare them. 

An answer to this question is given by the following result that describes a regularity condition on the geodesic flows $\varphi^g$, $\varphi^{g'}$ and the tangential reflections with respect to $g$ and $g'$ which is both necessary and sufficient for \eqref{eq:diffpi}. In order to formulate the regularity condition we need to introduce some more terminology: Since the geodesic flows $\varphi^g$ and $\varphi^{g'}$ are transversal to $\partial M\setminus \partial_\mathrm{g}M$ the inverse function theorem tells us that we can find an open neighborhood $N$ of $\{0\}\times(\partial M\setminus \partial_\mathrm{g}M)$ in $\R\times(\partial M\setminus \partial_\mathrm{g}M)$ such that $\varphi^g|_N$ and $\varphi^{g'}|_N$ are diffeomorphisms onto their images in $S\Sigma$. For any such neighborhood we can  define ``geometric reflection maps'' $\tilde R_{g}: \varphi^g(N) \to  \varphi^g(N)$ and $\tilde R_{g'}: \varphi^{g'}(N) \to  \varphi^{g'}(N)$ by putting
\[
\tilde R_{g}(x,v):=\varphi^{g}(t,x_0,R_{g(x_0)}v_0),\qquad \varphi^{g}|_{N}^{-1}(x,v)=(t,x_0,v_0)\in N,
\]
and analogously for $g'$. With these preparations we can state
\begin{prop}\label{prop:gdependence}The following two statements are equivalent:
\begin{enumerate}
\item There is a diffeomorphism $\M_g\cong \M_{g'}$ making the diagram \eqref{eq:diffpi} commute.
\item There is an open neighborhood $N$ of $\{0\}\times(\partial M\setminus \partial_\mathrm{g}M)$ in $\R\times(\partial M\setminus \partial_\mathrm{g}M)$ such that the two maps $N\cap (\R\times \partial_\mathrm{in}M)\to S\Sigma$ given by
\begin{align*}
(t,x,v)&\mapsto\begin{cases}\varphi^g(t,x,v), & t\leq 0,\\
(\tilde R_{g'}\circ\varphi^g)(t,x,R_{g(x)}v), & t>0,\end{cases}\\
(t,x,v)&\mapsto\begin{cases}\varphi^{g'}(t,x,v), & t\leq 0,\\
(\tilde R_{g}\circ\varphi^{g'})(t,x,R_{g'(x)}v), & t>0,\end{cases}
\end{align*}
are well-defined and smooth.
\end{enumerate}
If (1) or equivalently (2) holds, then the diffeomorphism in \eqref{eq:diffpi} is unique.
\end{prop}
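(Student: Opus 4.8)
The plan is to reduce both conditions to a single concrete one and then to compare two smooth structures in a flow chart around the gluing region. Throughout, write $\M:=(M\setminus\partial_\mathrm{g}M)/{\sim}$ for the underlying set with its quotient topology, and recall that $\M_g$ is this space with $(x,v)$ glued to $(x,R_{g(x)}v)$, while $\M_{g'}$ is the same with $R_{g(x)}$ replaced by $R_{g'(x)}$. I would also fix once and for all a flow-chart neighbourhood $N=N_\mathrm{in}\sqcup N_\mathrm{out}$ of $\{0\}\times(\partial M\setminus\partial_\mathrm{g}M)$ as in \Cref{lem:convexity}, small enough to serve as such for both metrics and small enough that $\varphi^g(N),\varphi^{g'}(N)$ are the domains on which $\tilde R_g,\tilde R_{g'}$ are defined.

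First I would observe that each of (1) and (2) forces $R_{g(x)}=R_{g'(x)}$ for all $x\in\partial\Omega$. A bijection $F\colon\M_g\to\M_{g'}$ with $F\circ\pi_g=\pi_{g'}$ exists iff $\pi_g$ and $\pi_{g'}$ have the same fibres — the fibres being singletons over $\mathring M$ and of the form $\{(x,v),(x,R_{g(x)}v)\}$ over $\partial M\setminus\partial_\mathrm{g}M$ — which is exactly the condition $R_g=R_{g'}$; so (1) cannot hold otherwise. For (2) one inspects the first map as $t\downarrow 0$: its one-sided limit is $\tilde R_{g'}\bigl((x,R_{g(x)}v)\bigr)=(x,R_{g'(x)}R_{g(x)}v)$, which agrees with the value $(x,v)$ taken at $t=0$ iff $R_{g'(x)}R_{g(x)}=\mathrm{id}$, i.e.\ iff $R_{g(x)}=R_{g'(x)}$ since tangential reflections are involutions; otherwise the map is discontinuous, hence not smooth. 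Thus if $R_g\neq R_{g'}$ both statements are false and the equivalence is trivial, and from here on I would assume $R_g=R_{g'}=:R$. Note also that whenever such an $F$ exists it is forced by $F(\pi_g(x,v))=\pi_{g'}(x,v)$, which already gives the uniqueness assertion.

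Under $R_g=R_{g'}=R$ the spaces $\M_g$ and $\M_{g'}$ are the same topological space $\M$ with $\pi_g=\pi_{g'}=:\pi$, so the only candidate is $F=\mathrm{id}_\M$, and (1) becomes the statement that $\mathrm{id}_\M$ is a diffeomorphism from the smooth structure $\mathfrak S_g$ built by \Cref{cor:main} out of $\varphi^g$ to the structure $\mathfrak S_{g'}$ built out of $\varphi^{g'}$. On $\M\setminus\G=\pi(\mathring M)$ both structures are characterised by ``$\pi|_{\mathring M}$ is a diffeomorphism onto its image'' and hence coincide; moreover, since $\varphi$ is smooth on $\varphi^{-1}(\mathring M)$ for either metric and $(\varphi|_{N_\mathrm{in}})^{-1}$ is smooth by \Cref{lem:convexity}\,\emph{v)}, the two flow charts $\Phi_g,\Phi_{g'}$ of \Cref{cor:U} are automatically $C^\infty$-compatible in both directions at every point of $N_\mathrm{in}$ with nonzero $t$-coordinate. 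Therefore (1) is equivalent to the transition map $T:=\Phi_{g'}^{-1}\circ\Phi_g$ — a homeomorphism between neighbourhoods of $\{0\}\times\partial_\mathrm{in}M$ inside $N_\mathrm{in}$ — being a diffeomorphism there.

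It then remains to identify $T$ with the maps in (2). Using \Cref{lem:convexity}\,\emph{vi)} (so that $\varphi_g$ equals $\varphi^g$ on $N^-_\mathrm{in}$ and $\varphi^g$ precomposed with the velocity reflection on $N_\mathrm{in}\setminus N^-_\mathrm{in}$, and likewise for $g'$), together with the elementary fact that $\varphi_g$ maps $N^-_\mathrm{in}$ into a neighbourhood of $\partial_\mathrm{in}M$ and $N_\mathrm{in}\setminus N^-_\mathrm{in}$ into a neighbourhood of $\partial_\mathrm{out}M$ — which pins down the component of $N$ into which the inverse geodesic-flow chart $(\varphi^{g'}|_N)^{-1}$ lands — I would compute $T$ branch by branch and obtain, near $\{0\}\times\partial_\mathrm{in}M$, the identity
\begin{equation*}
\bigl(\varphi^{g'}|_{N_\mathrm{in}}\bigr)\circ T=\Theta_g ,
\end{equation*}
where $\Theta_g$ is precisely the first map in (2). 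Since $\varphi^{g'}|_{N_\mathrm{in}}$ is a diffeomorphism onto its image, $T$ is smooth iff $\Theta_g$ is smooth; swapping $g$ and $g'$ turns $T$ into $T^{-1}=\Phi_g^{-1}\circ\Phi_{g'}$ and shows $T^{-1}$ is smooth iff the second map of (2) is. As $T$ is already a homeomorphism, it is a diffeomorphism exactly when both maps of (2) are smooth, which together with the preceding paragraph yields (1)$\iff$(2). The one genuinely delicate point is this branch-by-branch computation of $T$: one has to keep careful track of which of $N_\mathrm{in},N_\mathrm{out}$ the inverse geodesic-flow chart lands in for points emitted by $\varphi_g$ on either side of $\partial M$, and to shrink $N$ enough that $\tilde R_g,\tilde R_{g'}$ are defined on every set that occurs. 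Everything else — the reductions above and the automatic compatibility away from $t=0$ — is routine bookkeeping.
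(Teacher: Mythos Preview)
Your proof is correct and arrives at the same endpoint as the paper's --- showing that the smoothness of the chart transition $\Phi_{g'}^{-1}\circ\Phi_g$ (equivalently $\varphi_{g'}^{-1}\circ\varphi_g$) and its inverse is exactly condition (2) --- but you take a cleaner preparatory step that the paper leaves implicit. Namely, you first observe that each of (1) and (2) forces the tangential reflections to agree, $R_g=R_{g'}$: for (1) because a bijection $F$ with $F\circ\pi_g=\pi_{g'}$ exists only if $\pi_g$ and $\pi_{g'}$ have identical fibres, and for (2) by continuity of the first map at $t=0$. The paper instead asserts directly that ``the identity map $O\to O$ descends to a homeomorphism $\mathcal O_g\cong\mathcal O_{g'}$,'' which in fact already presupposes $R_g=R_{g'}$ on $O\cap(\partial M\setminus\partial_\mathrm{g}M)$; your reduction makes this point explicit and thereby closes a small expository gap.

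Once $R_g=R_{g'}$ is established, your reformulation --- viewing $\M_g$ and $\M_{g'}$ as one topological space with two smooth structures and asking when $\mathrm{id}_\M$ is a diffeomorphism --- is conceptually tidier than the paper's direct manipulation of $\varphi_g|_{N_{g,g'}}^{-1}\circ\varphi_{g'}|_{N_{g',g}}$, but the two computations are the same in substance: your $T=\Phi_{g'}^{-1}\circ\Phi_g$ coincides with the paper's map on $\{t\neq 0\}$ since $\Phi_\bullet=\pi\circ\varphi_\bullet|_{N_\mathrm{in}}$ and $\pi|_{\mathring M}$ is a diffeomorphism. Your identification $(\varphi^{g'}|_{N_\mathrm{in}})\circ T=\Theta_g$ is correct and matches the paper's final paragraph. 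The one place to be careful, as you note, is tracking which component $N_\mathrm{in}$ or $N_\mathrm{out}$ the inverse geodesic chart lands in; this is handled by \Cref{lem:convexity}\,\emph{vi)} and the sign of $t$, exactly as you indicate.
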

The proof of Proposition \ref{prop:gdependence} is given in Appendix \ref{sec:proof4}. 
\begin{remark}
An obvious case in which \Cref{prop:gdependence} can be applied is when $g$ and $g'$ differ only by a constant conformal factor in a neighborhood of $\partial\Omega$. In this case, the identification of the two unit tangent bundles directly eliminates that factor near $\partial M$ and \Cref{prop:gdependence} becomes trivial since $\M_g$ and $\M_{g'}$ coincide near their gluing regions.
\end{remark}

\section{Meromorphic continuation of the resolvent and weighted zeta function} \label{zeta}

In this section we derive two meromorphic continuation results: After recalling the meromorphically continued resolvent of \cite[Thm.~1]{Dyatlov.2016a} in Section \ref{zeta_open} we continue meromorphically a restricted resolvent of the generator of a smooth model flow for a non-grazing, hyperbolic billiard in Section \ref{zeta_resolvent}. This result immediately translates to the billiard operator $\mathbf{P}$ via the pullback $\pi^*$. Finally we derive the meromorphic continuation of weighted zeta functions for non-grazing billiard flows in Section \ref{zeta_continuation}. As a corollary we obtain meromorphic continuation of the weighted zeta function for Euclidean billiards in $\mathbb{R}^n$.

\subsection{Meromorphic continuation on open hyperbolic systems} \label{zeta_open}

In the following we will invoke the meromorphic continuation result obtained in \cite{Dyatlov.2016a} in the setting of \emph{open hyperbolic systems}. To make the paper more self-contained we recall their setting and results here: An open hyperbolic system is given by a flow $\psi$ on a compact manifold $\mathcal{U}$ with boundary satisfying the following requirements:

\textbf{(1)} The manifold boundary $\partial \mathcal{U}$ of $\mathcal{U}$ is smooth and \emph{strictly convex} w.r.t. the generator $X$ of $\psi$, i.e., for any boundary defining function $\rho\in \mathrm{C}^\infty(\mathcal{U})$
\begin{equation} \label{eq:strictconv}
p\in\partial \mathcal{U},\, (X \rho)(p) = 0 \quad\Longrightarrow\quad X(X\rho)(p) < 0 .
\end{equation}

\textbf{(2)} Let $K(\psi)$ denote the trapped set of $\psi$, i.e., the set of $p\in \mathcal{U}$ for which $\psi_t(p)$ exists $\forall t\in\R$. The flow $\psi$ is \emph{hyperbolic} on $K(\psi)$, i.e., for any $p\in K(\psi)$ the tangent bundle $T_p \mathcal{U}$ splits in a continuous and flow invariant fashion as
\begin{equation} \label{eq:open_splitting1}
T_p \mathcal{U} = \mathbb{R}\cdot X(p) \oplus E_s(p) \oplus E_u(p) ,
\end{equation}
and there exist constants $C_0, C_1 > 0$ such that
\begin{equation} \label{eq:open_hyperbolic1}
\begin{split}
\Arrowvert \mathrm{d}\psi_t(p) W\Arrowvert_{\psi_t(p)} &\leq C_0 \exp(-C_1 t) \Arrowvert W\Arrowvert_p, \quad t\geq 0,~ W\in E_s(p) \\
\Arrowvert \mathrm{d}\psi_t(p) W\Arrowvert_{\psi_t(p)} &\geq C_0^{-1} \exp(C_1 t) \Arrowvert W\Arrowvert_p, \quad t\geq 0,~ W\in E_u(p) ,
\end{split}
\end{equation}
where $\Arrowvert \cdot\Arrowvert$ denotes any continuous norm on $T\mathcal{U}$. Denote by $\mathring{\mathcal U}$ the manifold interior of $\mathcal U$. 

Now in this setting the following holds \cite[Thm.~1]{Dyatlov.2016a}: The family of operators
\begin{equation*}
\mathbf{R}(\lambda) \defgr \mathbf{1}_{\mathring{\mathcal U}} (X + \lambda)^{-1} \mathbf{1}_{\mathring{\mathcal U}}: \mathrm{C}^\infty_\mathrm{c}(\mathring{\mathcal U}) \rightarrow \mathcal{D}'(\mathring{\mathcal U})
\end{equation*}
is analytic for $\mathrm{Re}(\lambda) \gg 0$ and continues meromorphically to $\C$. Its poles are called \emph{Ruelle resonances} and the residue of $\mathbf{R}(\lambda)$ at a resonance $\lambda_0$ is given by a finite-rank operator
\begin{equation}
\Pi_{\lambda_0}: \mathrm{C}^\infty_\mathrm{c}(\mathring{\mathcal U}) \rightarrow \mathcal{D}'(\mathring{\mathcal U}) .\label{eq:Pilambda0}
\end{equation}
In particular Dyatlov and Guillarmou showed in \cite[Thm.~2]{Dyatlov.2016a} a very precise wavefront set estimate for the Schwartz kernel $K_{\Pi_{\lambda_0}}$ of $\Pi_{\lambda_0}$ which allows one to calculate the \emph{flat trace} $\mathrm{tr}^\flat$ of $\Pi_{\lambda_0}$ defined as the integral over the restriction of the kernel to the diagonal \cite[Section~4.1]{Dyatlov.2016a}:
\begin{equation*}
\mathrm{supp}( K_{\Pi_{\lambda_0}}) \subset \Gamma_+\times \Gamma_-, \quad \mathrm{WF}'( \Pi_{\lambda_0} ) \subset E^*_+\times E^*_- ,
\end{equation*}
where $\Gamma_\pm$ are the incoming/outgoing tales of $\psi$, i.e., those $p\in\mathcal{U}$ for which $\psi_{\mp t}(p)$ exists for all $t\geq 0$, and $E^*_\pm\subset T^*\mathcal{U}$ are extensions of the dual stable/unstable foliations $E^*_{u/s}$ onto $\Gamma_\pm$ constructed in \cite[Lemma~1.10]{Dyatlov.2016a}. For the regular (holomorphic) part $\mathbf{R}_H(\lambda)$ in the neighborhood of some $\lambda_0\in\mathbb{C}$ a similar estimate is known \cite[Lemma~3.5]{Dyatlov.2016a}:
\begin{equation} \label{eq_regular_wf}
\mathrm{WF}'( \mathbf{R}_H(\lambda) ) \subset \Delta( T^* \mathring{\mathcal{U}} ) \cup (E^*_+\times E^*_-)\cup \mathcal{Y}_+ ,
\end{equation}
where $\mathcal{Y}_+ \defgr \big\{\left( \mathrm{e}^{t H_p}(y, \eta), y, \eta \right) \,\big|\, t\geq 0,\, p(y, \eta) = 0,\, y\in\mathring{\mathcal{U}},\, \psi_t(\mathring{\mathcal{U}}) \big\}$ with $p(y, \eta) \defgr \langle X(y), \eta\rangle$ and $\mathrm{e}^{t H_p}$ the flow of the Hamiltonian vector field $H_p$ associated with $p$, and $\Delta( T^* \mathring{\mathcal{U}} ) \subset T^* \mathring{\mathcal{U}} \times T^* \mathring{\mathcal{U}}$ denotes the diagonal of the cotangent bundle over $\mathring{\mathcal{U}}$.

For the definition of the flat trace and the rather technical background on the related techniques we refer the reader to \cite[Section~2.4]{Dyatlov.2016}.

\begin{remark}\label{rem:vector1}
The setting of \cite{Dyatlov.2016a} covers the more general vector-valued case of a first order differential operator $\mathbf{X}:\mathrm{C}^\infty(\mathcal U;\mathcal E)\to \mathrm{C}^\infty(\mathcal U;\mathcal E)$ acting on smooth sections of a vector bundle $\mathcal E$ over $\mathcal U$ which is a lift of $X$ in the sense that
\[
\mathbf{X}(fs)=X(f)s + f \mathbf{X}(s)\qquad \forall\; s\in \mathrm{C}^\infty(\mathcal U;\mathcal E),\; f\in \mathrm{C}^\infty(\mathcal U).
\]
Our considerations of the following section therefore go through in this more general case, but we postpone the explicit treatment to \Cref{app:bundles} to keep the notation as simple as possible and the theorems self-contained for the reader who is primarily interested in the scalar case. 
\end{remark}

\subsection{A meromorphic resolvent for billiard systems} \label{zeta_resolvent}

Let $(\Sigma, g, \Omega)$ be a geodesic billiard system such that the associated non-grazing billiard flow $\varphi$ has compact trapped set $K$ and is hyperbolic on $K$ in the sense of Definition \ref{def:hypbilliard}. Before we state and prove our main theorem we first establish the following lemma concerning the generator $\mathbf{X}$ of the smooth model flow $\phi$ of a smooth model $(\mathcal{M},\pi,\phi)$ for $\varphi$ as in \Cref{def:smoothmodel} and the compact set $\mathcal K\subset \M$ defined in \eqref{eq:Ksmooth}:
\begin{lem} \label{lem:strictly_convex}
There exists a compact submanifold with boundary $\mathcal{U}_0$ of $\mathcal{M}$ with manifold interior $\mathring{\mathcal{U}}_0$ such that
\begin{equation} \label{eq:open_neighborhood}
\mathcal{K} \subset \mathring{\mathcal{U}}_0
\end{equation}
and such that there exists a smooth vector field $\mathbf{X}_0$ on $\mathcal{U}_0$ with the following properties:
\begin{enumerate}
	\item[i)] the manifold boundary $\partial \mathcal{U}_0$ of $\mathcal{U}_0$ is strictly convex w.r.t. $\mathbf{X}_0$ in the sense of \eqref{eq:strictconv};
	\item[ii)] $\mathbf{X} - \mathbf{X}_0$ is supported in an arbitrarily small neighborhood of $\partial \mathcal{U}_0$;
	\item[iii)] the trapped set of the flow of $\mathbf{X}_0$ coincides with $\mathcal K$.
\end{enumerate}
\end{lem}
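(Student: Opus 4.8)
The plan is to obtain $\mathcal U_0$ and $\mathbf X_0$ by applying the results of Conley--Easton \cite{Conley.1971} and Robinson \cite{Robinson.1980} on the construction of \emph{isolating blocks} to the compact invariant set $\mathcal K$ of the smooth flow $\phi$, exactly as was done in \cite{Guillarmou.2017, Dyatlov.2018}. First I would fix a relatively compact open neighborhood $\mathcal V$ of $\mathcal K$ in $\mathcal M$ whose closure is contained in a coordinate-friendly region and for which $\mathcal K$ is the maximal $\phi$-invariant subset of $\overline{\mathcal V}$ (such $\mathcal V$ exists because $\mathcal K$ is compact and is, by definition \eqref{eq:Ksmooth}, the maximal invariant set of any sufficiently small neighborhood of itself). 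Applying the Conley--Easton/Robinson theorem to $\mathcal K$ inside $\mathcal V$ produces an \emph{isolating block} $\mathcal B$: a compact submanifold with boundary, $\mathcal K\subset \mathring{\mathcal B}\subset \mathcal B\subset \mathcal V$, whose boundary $\partial\mathcal B$ is smooth and such that the generator $\mathbf X$ of $\phi$ is \emph{strictly convex} with respect to $\partial\mathcal B$ in the sense of \eqref{eq:strictconv}. Setting $\mathcal U_0\defgr\mathcal B$ and $\mathbf X_0\defgr\mathbf X$ on $\mathcal U_0$ would already give properties (i) and (iii): (i) is the strict-convexity statement of the isolating block, and (iii) holds because $\mathcal K$ is the maximal invariant set in $\overline{\mathcal V}\supset\mathcal B$, hence also the maximal invariant set in $\mathcal B$, which is precisely the trapped set of $\mathbf X_0=\mathbf X|_{\mathcal U_0}$ (no trajectory of $\mathbf X_0$ can leave $\mathcal U_0$ and return, so global existence in $\mathcal U_0$ forces it into $\mathcal K$).

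With $\mathbf X_0=\mathbf X$ property (ii) is trivially satisfied, so in principle one could stop here; however, to keep the statement in the exact form it is used later I would then observe that one is also free to modify $\mathbf X$ near $\partial\mathcal U_0$ without destroying (i) and (iii). Concretely: strict convexity \eqref{eq:strictconv} is an \emph{open} condition on the $1$-jet of the vector field along the (compact) boundary $\partial\mathcal U_0$, so any vector field $\mathbf X_0$ that is $C^1$-close to $\mathbf X$ in a neighborhood of $\partial\mathcal U_0$ and equal to $\mathbf X$ away from that neighborhood still has $\partial\mathcal U_0$ strictly convex; and since such an $\mathbf X_0$ differs from $\mathbf X$ only in a collar of $\partial\mathcal U_0$ that is disjoint from $\mathcal K$, every $\mathbf X_0$-trajectory that stays in $\mathcal U_0$ for all time must eventually enter the region where $\mathbf X_0=\mathbf X$ and, by strict convexity (no trajectory tangent to the boundary can re-enter), must in fact stay in the interior region where the two fields agree, hence be a $\phi$-trajectory trapped in $\mathcal U_0$, i.e.\ lie in $\mathcal K$. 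This yields (iii) for $\mathbf X_0$ as well. So (ii) is achieved with essentially no cost, and the ``arbitrarily small neighborhood'' in (ii) is just the collar width, which we may shrink at will.

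Finally I would record \eqref{eq:open_neighborhood}: it is immediate from $\mathcal K\subset\mathring{\mathcal B}=\mathring{\mathcal U}_0$, which is part of the isolating-block construction. The main obstacle is not any of these steps individually but rather verifying that the hypotheses of the Conley--Easton and Robinson results genuinely apply in our setting --- i.e.\ that $\mathcal K$ is a compact \emph{isolated invariant set} of the \emph{smooth} flow $\phi$ on the manifold (without boundary) $\mathcal M$. Compactness of $\mathcal K$ is hypothesized (via compactness of $K$ and \Cref{prop:trajectory_bijection}), smoothness of $\phi$ is \Cref{cor:main}, and isolation follows because any compact invariant set is isolated once we choose $\mathcal V$ small enough that $\mathcal K$ is the maximal invariant set inside it --- which is possible precisely because \eqref{eq:Ksmooth} characterizes $\mathcal K$ as the set of points whose whole orbit stays in \emph{some} compact set, so orbits through points near but not in $\mathcal K$ must leave any fixed small neighborhood. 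Once this is in place the construction of the isolating block is exactly \cite[Section~1.3]{Guillarmou.2017} or the relevant passage of \cite{Dyatlov.2018}, so I would cite those together with \cite{Conley.1971, Robinson.1980} rather than reproduce the argument. Hyperbolicity of $\phi$ on $\mathcal K$ (\Cref{thm:hypsmooth}) is not needed for this lemma but will be used in the following theorem when invoking \cite{Dyatlov.2016a}.
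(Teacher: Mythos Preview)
Your proposal is correct and follows essentially the same route as the paper: both reduce to the isolating-block construction of Conley--Easton/Robinson, invoked through \cite{Guillarmou.2017}. The paper's proof is terser---it first produces an auxiliary compact submanifold $\mathcal N\supset\mathcal K$ by a sublevel-set argument and then simply cites \cite[Prop.~2.2]{Guillarmou.2017} for $\mathcal U_0$ and \cite[Lemma~2.3]{Guillarmou.2017} for $\mathbf X_0$---whereas you unpack more of the reasoning. Your observation that one may take $\mathbf X_0=\mathbf X$ (since the isolating block already has $\partial\mathcal U_0$ strictly convex for $\mathbf X$ itself) is valid and makes (ii) vacuous; the paper's separate citation of \cite[Lemma~2.3]{Guillarmou.2017} packages the freedom to perturb near $\partial\mathcal U_0$, but as you note this freedom is not strictly needed for the lemma as stated. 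One small point: your isolation argument works for \emph{any} relatively compact neighborhood $\mathcal V$ of $\mathcal K$, not just small ones, since by \eqref{eq:Ksmooth} any point whose full $\phi$-orbit remains in the compact set $\overline{\mathcal V}$ already lies in $\mathcal K$.
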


\begin{proof}
First we choose a compact submanifold with boundary $\mathcal N$ of $\mathcal{M}$ with manifold interior $\mathring {\mathcal N}$ such that $\mathcal{K}\subset \mathring{\mathcal N}$. 
The existence of such an $\mathcal N$ is standard in smooth manifold theory, but we provide a proof for convenience: There exists a smooth function $F: \mathcal{M} \rightarrow \R$ such that $F^{-1}((-\infty, c])$ is compact for each $c\in\R$ and the sets $F^{-1}((-\infty, n])$, $n\in\mathbb{N}$ exhaust $\mathcal{M}$ (see e.g. \cite[Prop.~2.28]{Lee.2012}). By compactness of $\mathcal{K}$ there exists some $c_0\in\R$ such that $\mathcal{K}\subset F^{-1}((-\infty, c_0))$. Using Sard's theorem we find some $\varepsilon > 0$ such that $c_0 + \varepsilon$ is a regular value of $F$ and taking $\mathcal N \defgr F^{-1}((-\infty, c_0 + \varepsilon])$ does the trick.
	
Now we can invoke \cite[Prop.~2.2]{Guillarmou.2017}, which in turn builds upon \cite{Conley.1971,Robinson.1980}, to obtain a submanifold with boundary $\mathcal{U}_0\subset \mathring{\mathcal N}$ containing $\mathcal K$ in its manifold interior and \cite[Lemma~2.3]{Guillarmou.2017} to obtain the vector field $\mathbf{X}_0$ with the claimed properties.
\end{proof}
This lemma immediately yields the meromorphic extension of the restricted resolvent $\mathbf{1}_{\mathring{\mathcal{U}}_0} (\mathbf{X}_0 + \lambda)^{-1} \mathbf{1}_{\mathring{\mathcal{U}}_0}$ to the complex plane $\mathbb{C}$ via an application of \cite[Theorems~1, 2]{Dyatlov.2016a} to the open hyperbolic system $(\mathcal{U}_0, \mathbf{X}_0)$. Now our main theorem makes a statement about the generator $\mathbf{X}$ itself instead of the perturbation $\mathbf{X}_0$. To state and prove it we have to introduce some additional auxiliary objects and notations:

In the situation of (the proof of) \Cref{lem:strictly_convex} we may without loss of generality assume an embedding $\mathcal{N}\subset \mathcal{M}'$ into a closed manifold $\mathcal{M}'$ of the same dimension as $\mathcal{M}$. We thus arrive at the following overall situation:
\begin{equation*}
\mathcal{K}\subset \mathring{\mathcal{U}}_0\subset \mathcal{U}_0 \subset \mathring{\mathcal{N}} \subset\mathcal{N} \subset \mathcal{M}' .
\end{equation*}
Furthermore we may extend the vector fields $\mathbf{X}$ and $\mathbf{X}_0$ to $\mathcal{M}'$ arbitrarily and continue to denote such an extension by $\mathbf{X}$ and $\mathbf{X}_0$, respectively. Their respective flows $\phi$ and $\phi^0$ are therefore complete. While we choose the  extension of $\mathbf{X}$ arbitrarily, we choose the  extension of $\mathbf{X}_0$ such that for all $t\geq 0$: If  $p, \phi^0_t(p)\in \mathcal{U}_0$ then $\phi^0_s(p)\in \mathcal{U}_0$ for all  $s\in [0, t]$. This is possible by \Cref{lem:strictly_convex} and \cite[Lemma~1.1]{Dyatlov.2016a}. Analogously to \cite{Guillarmou.2017} we can define the \textit{escape times} from a compact set $\mathcal{U}\subset \mathcal{N}$ as
\begin{equation*}
\begin{split}
\tau^\pm_\mathcal{U}(p) &\defgr \pm \sup\left\{ t\geq 0 \,\big|\, \phi_{\pm s}(p)\in \mathcal{U}\, \forall s\in [0, t] \right\} ,\\
\tau^{0, \pm}_\mathcal{U}(p) &\defgr \pm \sup\left\{ t\geq 0 \,\big|\, \phi^0_{\pm s}(p)\in \mathcal{U}\, \forall s\in [0, t] \right\}, \quad p\in\mathcal{N} ,
\end{split}
\end{equation*}
together with the forward and backward trapped sets
\begin{equation*}
\Gamma_\pm(\mathcal{U}) \defgr \big\{ p\in\mathcal{U} \,\big|\, \tau^\mp_\mathcal{U}(p) = \mp\infty \big\}, \qquad 
\Gamma^0_\pm(\mathcal{U}) \defgr \big\{ p\in\mathcal{U} \,\big|\, \tau^{0, \mp}_\mathcal{U}(p) = \mp\infty \big\}.
\end{equation*}
Next, we define a natural candidate for the inverse of $(\mathbf{X} + \lambda)$ on any open set $\mathcal{O}\subset \mathcal{N}$: For any $f\in \mathrm{C}^\infty_\mathrm{c}(\mathcal{O})$ and $\lambda\in \mathbb{C}$ consider the function on $\mathcal{O}$ formally given by the following integral
\begin{equation*}
\mathbf{R}_\mathcal{O}(\lambda)f(p) \defgr \int_0^{-\tau^-_{\overline{\mathcal{O}}}(p)} \mathrm{e}^{-\lambda t} f\left( \phi_{-t}(p) \right) \mathrm{d}t, \quad p\in \mathcal{O} ~.
\end{equation*}
This definition requires formal justification for two reasons: On the one hand the integral may not converge if $\tau^-_{\overline{\mathcal{O}}}(p) = -\infty$, and on the other hand the regularity properties of $\mathbf{R}_\mathcal{O}(\lambda) f$ are not obvious from the definition. We can overcome these problems if we assume that $\mathcal O$ is chosen such that:
\begin{enumerate}
 \item $\mathbf{X} = \mathbf{X}_0$ on $\overline{\mathcal{O}}$
 \item $\overline{\mathcal{O}}$ is \textit{dynamically convex with respect to $\phi^0$}, i.e.,  for any $t\geq 0$ we have
\begin{equation*}
p,\, \phi_t^0(p)\in\overline{\mathcal{O}} \quad\Longrightarrow\quad \phi^0_s(p)\in\overline{\mathcal{O}}  \;\forall s\in [0, t].
\end{equation*}
\end{enumerate}
Then by the first assumption we get
\begin{equation*}
\mathbf{R}_\mathcal{O}(\lambda)f(p) = \int_0^{-\tau^-_{\overline{\mathcal{O}}}(p)} \mathrm{e}^{-\lambda t} f\left( \phi^0_{-t}(p) \right) \mathrm{d}t, \quad p\in \mathcal{O} .
\end{equation*}
By the second assumption and the fact that $f$ is supported in $\mathcal O$ we can replace the upper integration bound by $\infty$ and obtain for $\mathrm{Re}(\lambda) \gg 0$ that
\begin{equation*}
\mathbf{R}_\mathcal{O}(\lambda) f = \int_0^\infty \mathrm{e}^{-\lambda t} \left( \phi_{-t}^0 \right)^* f \,\mathrm{d}t = \big( \left( \mathbf{X}_0 + \lambda \right)^{-1} \mathbf{1}_\mathcal{O}\big) f ~,
\end{equation*}
which holds as an equality of e.g.\ continuous functions on $\mathcal{O}$ by the integral formula and in turn lets us conclude that
\begin{equation} \label{eq_resolvent_equality}
\mathbf{R}_\mathcal{O}(\lambda) = \mathbf{R}(\lambda)\big|_{\mathrm{C}^\infty_\mathrm{c}(\mathcal{O})}: \mathrm{C}^\infty_\mathrm{c}(\mathcal{O}) \rightarrow \mathcal{D}'(\mathring{\mathcal{U}}_0) \hookrightarrow \mathcal{D}'(\mathcal{O}) ~.
\end{equation}
In particular we have that $\mathbf{R}_\mathcal{O}(\lambda): \mathrm{C}^\infty_\mathrm{c}(\mathcal{O}) \rightarrow \mathrm{C}(\mathcal{O})$ is a holomorphic family of continuous operators on $\{ \mathrm{Re}(\lambda) \gg 0\}$ which satisfies $(\mathbf{X} + \lambda) \mathbf{R}_\mathcal{O}(\lambda) = \mathrm{id}_{\mathrm{C}^\infty_\mathrm{c}(\mathcal{O})}$. With these preliminaries at hand we can now show meromorphic continuation of $\mathbf{R}_\mathcal{O}(\lambda)$ to $\mathbb{C}$ by providing as a particular candidate for $\mathcal{O}$ a concrete dynamically convex neighborhood of the trapped set and applying the results of \cite{Dyatlov.2016a} to $\mathbf{X}_0$. Concretely we prove the following:

\begin{theorem} \label{thm:resolvent}
Let $(\Sigma, g, \Omega)$ be a geodesic billiard system with non-grazing billiard flow $\varphi$, $(\mathcal{M},\pi,\phi)$ a smooth model for $\varphi$ as in \Cref{def:smoothmodel}, and $\mathbf{X}$ the generator of $\phi$. If the trapped set $K$ of $\varphi$ is compact and $\varphi$ is hyperbolic on $K$ in the sense of Definition \ref{def:hypbilliard}, then there exists an arbitrarily small compact $\mathcal{U}\subset \mathcal{M}$ with $\mathcal{K}\subset \mathring{\mathcal{U}}$ such that $\mathbf{R}_{\mathring{\mathcal{U}}}(\lambda)$ extends from $\mathrm{Re}(\lambda) \gg 0$ to $\mathbb{C}$ as a meromorphic family of operators $\mathrm{C}^\infty_\mathrm{c}(\mathring{\mathcal{U}}) \rightarrow \mathcal{D}'(\mathring{\mathcal{U}})$. Its residue at a pole $\lambda_0$ is a finite-rank operator $\Pi_{\lambda_0}: \mathrm{C}^\infty_\mathrm{c}(\mathring{\mathcal U}) \rightarrow \mathcal{D}'(\mathring{\mathcal U})$ satisfying
\begin{equation} \label{eq_wavefront1}
\mathrm{supp}( K_{\Pi_{\lambda_0}} ) \subset \Gamma_+(\mathcal{U})\times \Gamma_-(\mathcal{U}), \quad \mathrm{WF}'\left( \Pi_{\lambda_0} \right) \subset E^*_+\times E^*_- .
\end{equation}
Furthermore, the holomorphic part $\mathbf{R}^H_{\mathring{\mathcal{U}}}(\lambda)$ of $\mathbf{R}_{\mathring{\mathcal{U}}}(\lambda)$ with $\lambda$ in a neighborhood of $\lambda_0$ satisfies the following wavefront estimate:
\begin{equation} \label{eq_wavefront2}
\mathrm{WF}'\big( \mathbf{R}^H_{\mathring{\mathcal{U}}}(\lambda) \big) \subset \Delta( T^* \mathring{\mathcal{U}} ) \cup (E^*_+\times E^*_-)\cup \mathcal{Y}_+ ,
\end{equation}
with $\Delta (T^*\mathring{\mathcal{U}})$ and $\mathcal{Y}_+$ defined after \eqref{eq_regular_wf}. Finally, $\mathcal{U}$ can be chosen to be an isolating block as defined in \cite[Section~1.C.]{Conley.1971} and such that it satisfies the dynamical convexity condition (2) introduced above.
\end{theorem}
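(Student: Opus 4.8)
The plan is to reduce the statement to the meromorphic continuation already available for the open hyperbolic system $(\mathcal{U}_0,\mathbf{X}_0)$ furnished by \Cref{lem:strictly_convex}, and then transport all conclusions to $\mathbf{R}_{\mathring{\mathcal{U}}}(\lambda)$ via a careful choice of $\mathcal{U}$ together with the identity \eqref{eq_resolvent_equality}. First I would check that $(\mathcal{U}_0,\mathbf{X}_0)$ satisfies requirements (1)--(2) of Section~\ref{zeta_open}: strict convexity of $\partial\mathcal{U}_0$ w.r.t.\ $\mathbf{X}_0$ is \Cref{lem:strictly_convex}(i), and hyperbolicity on the trapped set --- which equals $\mathcal{K}$ by \Cref{lem:strictly_convex}(iii) --- follows from the hyperbolicity of $\phi$ on $\mathcal{K}$ established in \Cref{thm:hypsmooth}, since $\mathbf{X}_0=\mathbf{X}$ on a neighborhood of $\mathcal{K}$ by \Cref{lem:strictly_convex}(ii), so the two flows and their differentials agree along orbits contained in $\mathcal{K}$. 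Applying \cite[Theorems~1 and~2]{Dyatlov.2016a} to $(\mathcal{U}_0,\mathbf{X}_0)$ then yields the meromorphic continuation of $\mathbf{R}(\lambda)=\mathbf{1}_{\mathring{\mathcal{U}}_0}(\mathbf{X}_0+\lambda)^{-1}\mathbf{1}_{\mathring{\mathcal{U}}_0}$ with finite-rank residues $\Pi^0_{\lambda_0}$ satisfying $\mathrm{supp}(K_{\Pi^0_{\lambda_0}})\subset\Gamma^0_+(\mathcal{U}_0)\times\Gamma^0_-(\mathcal{U}_0)$ and $\mathrm{WF}'(\Pi^0_{\lambda_0})\subset E^*_+\times E^*_-$, and with holomorphic part obeying \eqref{eq_regular_wf}.

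Next I would produce the block $\mathcal{U}$. Since $\mathcal{K}$ is an isolated invariant set of $\phi^0$ (equivalently of $\phi$ near $\mathcal{K}$, where the flows coincide) disjoint from $\mathrm{supp}(\mathbf{X}-\mathbf{X}_0)$ by \Cref{lem:strictly_convex}(ii), the isolating-block construction of Conley--Easton \cite{Conley.1971,Robinson.1980}, in the form used in \cite{Guillarmou.2017,Dyatlov.2018}, provides an arbitrarily small isolating block $\mathcal{U}$ for $\phi^0$ with $\mathcal{K}\subset\mathring{\mathcal{U}}\subset\mathcal{U}\subset\mathring{\mathcal{U}}_0\setminus\mathrm{supp}(\mathbf{X}-\mathbf{X}_0)$, with $\partial\mathcal{U}$ strictly convex w.r.t.\ $\mathbf{X}_0$, and dynamically convex with respect to $\phi^0$. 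In particular $\mathbf{X}=\mathbf{X}_0$ on $\overline{\mathcal{U}}$, so $\mathcal{O}:=\mathring{\mathcal{U}}$ meets conditions (1) and (2) of the discussion preceding the theorem, whence \eqref{eq_resolvent_equality} gives $\mathbf{R}_{\mathring{\mathcal{U}}}(\lambda)=\mathbf{R}(\lambda)|_{\mathrm{C}^\infty_\mathrm{c}(\mathring{\mathcal{U}})}$ for $\mathrm{Re}(\lambda)\gg0$, with values in $\mathcal{D}'(\mathring{\mathcal{U}}_0)\hookrightarrow\mathcal{D}'(\mathring{\mathcal{U}})$. Postcomposition with the continuous restriction map $\mathcal{D}'(\mathring{\mathcal{U}}_0)\to\mathcal{D}'(\mathring{\mathcal{U}})$ preserves meromorphy, so $\mathbf{R}_{\mathring{\mathcal{U}}}(\lambda)$ continues meromorphically to $\mathbb{C}$ with residue $\Pi_{\lambda_0}=\mathbf{1}_{\mathring{\mathcal{U}}}\Pi^0_{\lambda_0}\mathbf{1}_{\mathring{\mathcal{U}}}$, still of finite rank.

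It then remains to pass the estimates \eqref{eq_wavefront1}--\eqref{eq_wavefront2} from $\mathcal{U}_0$ to $\mathcal{U}$. Supports and wavefront sets of Schwartz kernels only shrink under restriction to $\mathring{\mathcal{U}}\times\mathring{\mathcal{U}}$, so it suffices to verify $\Gamma^0_\pm(\mathcal{U}_0)\cap\mathring{\mathcal{U}}\subset\Gamma_\pm(\mathcal{U})$ (the restrictions of $E^*_\pm$ and $\mathcal{Y}_+$ then coincide with the objects attached to $\mathcal{U}$, being defined via the flow and $\phi=\phi^0$ on $\mathcal{U}$). For $p\in\Gamma^0_+(\mathcal{U}_0)\cap\mathring{\mathcal{U}}$ the backward $\phi^0$-orbit stays in the compact set $\mathcal{U}_0$, so its $\alpha$-limit set is a nonempty compact invariant subset of $\mathcal{U}_0$, hence lies in $\mathcal{K}\subset\mathring{\mathcal{U}}$; thus $\phi^0_{-s}(p)\in\mathring{\mathcal{U}}$ for all $s\geq S_0$, and dynamical convexity of $\mathcal{U}$ applied to the pair $\phi^0_{-S}(p),p\in\mathcal{U}$ (any $S\geq S_0$) forces $\phi^0_{-s}(p)\in\mathcal{U}$ for all $s\geq0$, i.e.\ $p\in\Gamma_+(\mathcal{U})$; the $\Gamma_-$-case is symmetric. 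This yields \eqref{eq_wavefront1}, and \eqref{eq_wavefront2} follows the same way from \eqref{eq_regular_wf}; since $\mathcal{U}$ was built as an isolating block satisfying (2), the final assertion is also covered.

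The main obstacle is the construction in the second paragraph: exhibiting a single $\mathcal{U}$ that is simultaneously an isolating block, has strictly convex boundary with respect to $\mathbf{X}_0$, and is \emph{dynamically convex} with respect to the completed flow $\phi^0$. The dynamical-convexity (``no return'') property is the delicate point, and it is here that the Conley--Easton/Robinson machinery (as exploited in \cite{Guillarmou.2017,Dyatlov.2018}) does the real work; its compatibility with the constraint $\overline{\mathcal{U}}\cap\mathrm{supp}(\mathbf{X}-\mathbf{X}_0)=\emptyset$ while still keeping $\mathcal{K}\subset\mathring{\mathcal{U}}$ is guaranteed by the disjointness in \Cref{lem:strictly_convex}(ii). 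Everything else is routine bookkeeping about restrictions of operators, distributions, and their wavefront sets.
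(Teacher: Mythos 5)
Your overall strategy matches the paper's: reduce to the open hyperbolic system $(\mathcal{U}_0, \mathbf{X}_0)$ from Lemma~\ref{lem:strictly_convex}, pick a small compact $\mathcal{U}$ on which $\mathbf{X}=\mathbf{X}_0$, and transfer the meromorphic continuation and wavefront estimates via \eqref{eq_resolvent_equality}. Your first paragraph and your detailed verification in the third paragraph that $\Gamma^0_\pm(\mathcal{U}_0)\cap\mathring{\mathcal{U}}\subset\Gamma_\pm(\mathcal{U})$ are both fine (indeed, more explicit than the paper, which dispatches the transfer of wavefront estimates in a single sentence).

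The gap is in your second paragraph, and you correctly identify it yourself as ``the delicate point,'' but then paper over it. You assert that the Conley--Easton/Robinson isolating-block construction directly produces an arbitrarily small $\mathcal{U}\subset\mathring{\mathcal{U}}_0\setminus\mathrm{supp}(\mathbf{X}-\mathbf{X}_0)$ that is \emph{dynamically convex with respect to the completed flow $\phi^0$}, and you claim this is ``guaranteed by the disjointness in Lemma~\ref{lem:strictly_convex}(ii).'' Neither is correct. Disjointness from $\mathrm{supp}(\mathbf{X}-\mathbf{X}_0)$ only ensures $\mathbf{X}=\mathbf{X}_0$ on $\mathcal{U}$; it says nothing about re-entry. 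And the isolating-block property --- strict convexity of $\partial\mathcal{U}$ in the sense of \eqref{eq:strictconv} --- is a \emph{local} boundary condition that forbids internal tangencies but does not prevent a $\phi^0$-orbit from exiting $\mathcal{U}$ transversally, wandering through $\mathcal{U}_0\setminus\mathcal{U}$ (or even outside $\mathcal{U}_0$, since $\phi^0$ is now a complete flow on the closed manifold $\mathcal{M}'$), and re-entering $\mathcal{U}$ transversally later. That scenario violates condition~(2) while respecting \eqref{eq:strictconv} at every boundary point.

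The paper closes this gap by a different, and cleaner, construction of $\mathcal{U}$. First, the extension of $\mathbf{X}_0$ to $\mathcal{M}'$ was chosen in the preamble (using \cite[Lemma~1.1]{Dyatlov.2016a}) so that $\mathcal{U}_0$ itself already satisfies the no-return property: $p,\phi^0_t(p)\in\mathcal{U}_0$ with $t\geq 0$ forces $\phi^0_s(p)\in\mathcal{U}_0$ for all $s\in[0,t]$. Then, for any open $\mathcal{O}\supset\mathcal{K}$ with $\overline{\mathcal{O}}\subset\mathring{\mathcal{U}}_0$, one invokes \cite[Lemma~1.4]{Dyatlov.2016a} to find $T>0$ with
\begin{equation*}
\mathcal{U}\defgr\phi^0_{-T}(\mathcal{U}_0)\cap\mathcal{U}_0\cap\phi^0_T(\mathcal{U}_0)\subset\mathcal{O}.
\end{equation*}
Dynamical convexity of this $\mathcal{U}$ then follows by a two-line argument: if $p,\phi^0_t(p)\in\mathcal{U}$ with $t>0$, then by definition $\phi^0_{-T}(p),\phi^0_{t+T}(p)\in\mathcal{U}_0$, so the no-return property of $\mathcal{U}_0$ gives $\phi^0_s(p)\in\mathcal{U}_0$ for all $s\in[-T,t+T]$, which in turn shows $\phi^0_{s'}(p)\in\mathcal{U}$ for $s'\in[0,t]$. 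This construction avoids any need to re-extend $\mathbf{X}_0$ and to re-establish no-return for a newly built small block, which is precisely the difficulty your invocation of the Conley--Easton machinery leaves unresolved.
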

\begin{remark}In \Cref{thm:resolvent} \emph{arbitrarily small} means that given any open neighborhood $\mathcal{O}$ of $\mathcal K$ in $\M$ we can choose $\mathcal{U}$ such that $\mathcal{U} \subset \mathcal{O}$.
\end{remark}
\begin{proof}[Proof of \Cref{thm:resolvent}]
By \eqref{eq_resolvent_equality} we only need to construct a dynamically convex neighborhood $\mathcal{U}$ of the trapped set $\mathcal{K}$ satisfying $\mathcal{U}\subset \mathring{\mathcal{U}_0}$. Then we can choose $\mathbf{X}_0$ in \Cref{lem:strictly_convex} in such a way that $\mathbf{X} - \mathbf{X}_0 = 0$ on $\mathcal{U}$ and the stated properties of the residue $\Pi_{\lambda_0}$ transfer from the respective properties of the restricted resolvent $\mathbf{1}_{\mathring{\mathcal{U}}_0} (\mathbf{X}_0 + \lambda)^{-1} \mathbf{1}_{\mathring{\mathcal{U}}_0}$. The wavefront estimate for $\mathbf{R}_H(\lambda)$ follows from \cite[Lemma~3.5]{Dyatlov.2016a}.

To construct $\mathcal{U}$ let $\mathcal{U}_0\subset \mathcal{M}$ be the compact submanifold of \Cref{lem:strictly_convex} and $\mathcal{O}$ any open neighborhood of $\mathcal{K}$ satisfying $\overline{\mathcal{O}}\subset \mathring{\mathcal{U}}_0$. By \cite[Lemma~1.4]{Dyatlov.2016a} there exists $T > 0$ such that
\begin{equation*}
\mathcal{U} \defgr \phi^0_{-T}(\mathcal{U}_0) \cap \mathcal{U}_0\cap \phi^0_T(\mathcal{U}_0) \subset \mathcal{O} .
\end{equation*}
But $\mathcal{U}$ is also dynamically convex with respect to $\phi^0$, because if $p, \phi^0_t(p)\in \mathcal{U}$, $t > 0$, then $\phi^0_{-T}(p), \phi^0_{t + T}(p)\in \mathcal{U}$ by definition of $\mathcal{U}$. But $\mathcal{U}_0$ is already strictly convex with respect to $\phi^0$ which implies $\phi^0_s(p)\in \mathcal{U}_0$ for all $s\in [-T, t + T]$ and therefore $\phi^0_{s'}(p)\in \mathcal{U}$ for all $s'\in [0, t]$.
\end{proof}

Finally, we would like to transfer the results about $\mathbf{X}$ to the geodesic billiard system. To this end we first introduce the escape time of $\varphi$ from a compact set $U\subset M\setminus \partial_gM$ and the forward/backward trapped set of $U$ as follows:
\begin{equation*}
\begin{split}
\tau^\pm_U(x, v) &\defgr \pm \sup\left\{ t\geq 0 \,\big|\, (\pm [0, t])\times \{(x, v)\}\subset D:\, \varphi_{\pm s}(x, v)\in U\, \forall s\in [0, t] \right\} ,\\
\Gamma_\pm(U) &\defgr \left\{ (x, v)\in U \,\big|\, \tau^\mp_U(x, v) = \mp\infty \right\}.
\end{split}
\end{equation*}
Note that given $(x, v)\in U$ the compactness of $U$ implies either $\mathbb{R}\times \{(x, v)\}\subset D$ and $\varphi(\mathbb{R}\times \{(x, v)\}) \subset U$ or the trajectory through $(x, v)$ can be extended to a small neighborhood of $[\tau^-_U(x, v), \tau^+_U(x, v)]$.

We can now obtain the following meromorphic continuation result as a rather immediate corollary of \Cref{thm:resolvent} and the characterization of the billiard operator $\mathbf{P}$ in \eqref{eq:XP}:

\begin{cor} \label{cor:Pmeromorphic}
Let $(\Sigma, g, \Omega)$ be a geodesic billiard system with non-grazing billiard flow $\varphi$ and $\mathbf{P}$ the differential operator of Section \ref{sec:CBillP}. If the trapped set $K$ of $\varphi$ is compact and $\varphi$ is hyperbolic on $K$, then there exists an arbitrarily small compact set $U\subset M\setminus \partial_\mathrm{g} M$ with $K\subset \mathring{U}$ and $U\cap (\partial M\setminus \partial_\mathrm{g} M)$ reflection-symmetric such that the definition
\begin{equation*}
\mathbf{R}_U(\lambda)f(x, v) \defgr \int_0^{-\tau^-_U(x, v)} \mathrm{e}^{-\lambda t} f\left( \varphi_{-t}(x, v) \right) \mathrm{d}t, \qquad f\in\mathrm{C}^\infty_\mathrm{Bill, c}(\mathring{U}) ~\mathrm{and}~ (x, v)\in U,
\end{equation*}
yields a well-defined family of linear maps $\mathbf{R}_U(\lambda): \mathrm{C}^\infty_\mathrm{Bill, c}(\mathring{U}) \rightarrow \mathrm{C}_\mathrm{Bill}(\mathring{U})$ which satisfy $(\mathbf{P} + \lambda) \mathbf{R}_U(\lambda) = \mathrm{id}$ for $\mathrm{Re}(\lambda) \gg 0$ and whose matrix coefficients
\begin{equation*}
\langle \mathbf{R}_U(\lambda) f, g\rangle_{\mathrm{L}^2(\mathrm{d}\mathrm{vol}_g)},\qquad f, g\in \mathrm{C}^\infty_\mathrm{Bill, c}(\mathring{U}),
\end{equation*}
extend from holomorphic functions on $\mathrm{Re}(\lambda) \gg 0$ to meromorphic functions on $\mathbb{C}$ with poles contained in a discrete set of complex numbers that is independent of $f$ and $g$. Here $\mathrm{d}\mathrm{vol}_g$ is the Riemannian volume density associated with the Sasaki metric on $S\Sigma$.
\end{cor}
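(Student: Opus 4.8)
The plan is to obtain everything by transporting \Cref{thm:resolvent} along the projection $\pi$ of a smooth model and then reading it on the billiard side via \Cref{prop:pullbacks}. First I would fix a smooth model $(\mathcal{M},\pi,\phi)$ for $\varphi$ with generator $\mathbf{X}$, and let $\mathcal{U}\subset\mathcal{M}$ be the compact set produced by \Cref{thm:resolvent}: $\mathcal{K}\subset\mathring{\mathcal{U}}$, arbitrarily small, dynamically convex with respect to $\phi^0$, with $\mathbf{R}_{\mathring{\mathcal{U}}}(\lambda)\colon\mathrm{C}^\infty_\mathrm{c}(\mathring{\mathcal{U}})\to\mathcal{D}'(\mathring{\mathcal{U}})$ holomorphic for $\mathrm{Re}(\lambda)\gg0$ and meromorphically continued to $\mathbb{C}$ (in fact valued in $\mathrm{C}^\infty(\mathring{\mathcal{U}})$ for $\mathrm{Re}(\lambda)\gg0$, since there it is given on $\mathrm{C}^\infty_\mathrm{c}(\mathring{\mathcal{U}})$ by the integral $\int_0^\infty\mathrm{e}^{-\lambda t}(\phi^0_{-t})^*(\cdot)\,\mathrm{d}t$ with the complete flow $\phi^0$ on the closed manifold $\mathcal{M}'$, which converges together with all its derivatives, cf.\ the discussion preceding \Cref{thm:resolvent}). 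Then I would set $U\defgr\pi^{-1}(\mathcal{U})\subset M\setminus\partial_\mathrm{g} M$ and $\mathring{U}\defgr\pi^{-1}(\mathring{\mathcal{U}})$. Since $\pi$ is continuous and proper by \Cref{lem:G}, $U$ is compact, it is arbitrarily small because $\mathcal{U}$ is, $\mathring{U}$ is an open subset of $M$ with $K=\pi^{-1}(\mathcal{K})\subset\mathring{U}$ by \Cref{prop:trajectory_bijection}, and $U\cap(\partial M\setminus\partial_\mathrm{g} M)$ as well as $\mathring{U}\cap(\partial M\setminus\partial_\mathrm{g} M)$ are reflection-symmetric by \Cref{lem:G}; hence the spaces $\mathrm{C}^\infty_\mathrm{Bill, c}(\mathring{U})$ and the operator $\mathbf{P}$ on them are covered by \Cref{prop:pullbacks}.

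The heart of the proof is the identity $\mathbf{R}_U(\lambda)=\pi^*\circ\mathbf{R}_{\mathring{\mathcal{U}}}(\lambda)\circ(\pi^*)^{-1}$ on $\mathrm{C}^\infty_\mathrm{Bill, c}(\mathring{U})$ for $\mathrm{Re}(\lambda)\gg0$. Writing $f=\pi^*\tilde f$ with $\tilde f\in\mathrm{C}^\infty_\mathrm{c}(\mathring{\mathcal{U}})$ (\Cref{prop:pullbacks}), the key step is
\[
\tau^-_U(x,v)=\tau^-_{\mathcal{U}}\big(\pi(x,v)\big),\qquad (x,v)\in U,
\]
where on the right one uses the escape time of $\phi$ from $\mathcal{U}$. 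One inclusion follows from $(\mathrm{id}_\R\times\pi)^{-1}(\mathcal{D})=D$, $\pi^{-1}(\mathcal{U})=U$, and the intertwining $\phi_t\circ\pi=\pi\circ\varphi_t$; for the reverse inclusion one uses that as long as the $\phi$-trajectory of $\pi(x,v)$ stays in the compact set $\mathcal{U}\subset\mathcal{M}$, the $\varphi$-trajectory of $(x,v)$ stays in the compact set $U\subset M\setminus\partial_\mathrm{g} M$, hence at positive distance from $\partial_\mathrm{g} M$, so it does not run out of the flow domain $D$ --- this is exactly the escape property recorded just before the statement. Consequently the integrand defining $\mathbf{R}_U(\lambda)f$ at $(x,v)$ is defined for all $t\in[0,-\tau^-_U(x,v))$, it equals $\mathrm{e}^{-\lambda t}\tilde f(\phi_{-t}(\pi(x,v)))$ by \eqref{eq:flowcompat}, the integral converges for $\mathrm{Re}(\lambda)\gg0$ (as $\tilde f$ is bounded), and thus $\mathbf{R}_U(\lambda)f=\pi^*\big(\mathbf{R}_{\mathring{\mathcal{U}}}(\lambda)\tilde f\big)\in\pi^*\mathrm{C}^\infty(\mathring{\mathcal{U}})=\mathrm{C}^\infty_\mathrm{Bill}(\mathring{U})\subset\mathrm{C}_\mathrm{Bill}(\mathring{U})$. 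Applying $\pi^*$ to $(\mathbf{X}+\lambda)\mathbf{R}_{\mathring{\mathcal{U}}}(\lambda)=\mathrm{id}$ from \Cref{thm:resolvent} and using $\pi^*\circ\mathbf{X}\circ(\pi^*)^{-1}=\mathbf{P}$ (\Cref{prop:pullbacks}) gives $(\mathbf{P}+\lambda)\mathbf{R}_U(\lambda)=\mathrm{id}$ for $\mathrm{Re}(\lambda)\gg0$.

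For the matrix coefficients I would push the volume density to the model. Since the Sasaki volume density on $S\Sigma$ equals, up to a positive dimensional constant $c$, the Liouville density $|\alpha\wedge(\mathrm{d}\alpha)^{n-1}|$ of the canonical contact form, and since by \Cref{cor:main} the contact form $\alpha_\mathcal{M}$ satisfies $(\pi|_{\mathring{M}})^*\alpha_\mathcal{M}=\alpha|_{\mathring{M}}$, the density $(\pi|_{\mathring{M}})_*(\mathrm{d}\mathrm{vol}_g|_{\mathring{M}})$ extends to the smooth \emph{positive} density $\mu_\mathcal{M}\defgr c\,|\alpha_\mathcal{M}\wedge(\mathrm{d}\alpha_\mathcal{M})^{n-1}|$ on all of $\mathcal{M}$ (positivity because $\alpha_\mathcal{M}$ is a contact form). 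Taking $f=\pi^*\tilde f$, $g=\pi^*\tilde g$ with $\tilde f,\tilde g\in\mathrm{C}^\infty_\mathrm{c}(\mathring{\mathcal{U}})$ and using the previous paragraph, that $\partial M\subset M$ and $\mathcal{G}\subset\mathcal{M}$ have measure zero, and the change of variables $\pi|_{\mathring{M}}$, one computes for $\mathrm{Re}(\lambda)\gg0$
\[
\langle\mathbf{R}_U(\lambda)f,g\rangle_{\mathrm{L}^2(\mathrm{d}\mathrm{vol}_g)}=\int_{\mathring{M}}\pi^*\!\big((\mathbf{R}_{\mathring{\mathcal{U}}}(\lambda)\tilde f)\,\overline{\tilde g}\big)\,\mathrm{d}\mathrm{vol}_g=\int_{\mathcal{M}}(\mathbf{R}_{\mathring{\mathcal{U}}}(\lambda)\tilde f)\,\overline{\tilde g}\,\mu_\mathcal{M}=\big\langle\mathbf{R}_{\mathring{\mathcal{U}}}(\lambda)\tilde f,\ \overline{\tilde g}\,\mu_\mathcal{M}\big\rangle ,
\]
the last being the distributional pairing, legitimate because $\overline{\tilde g}\,\mu_\mathcal{M}$ is a compactly supported smooth density on $\mathring{\mathcal{U}}$ and $\mathbf{R}_{\mathring{\mathcal{U}}}(\lambda)\tilde f$ is a continuous function for $\mathrm{Re}(\lambda)\gg0$. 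By \Cref{thm:resolvent} the right-hand side continues meromorphically to $\mathbb{C}$ with poles contained in the discrete set of poles of $\mathbf{R}_{\mathring{\mathcal{U}}}(\lambda)$, which is independent of $\tilde f,\tilde g$ and hence of $f,g$; uniqueness of meromorphic continuation then finishes the proof, the last sentence of the statement being just the recollection that $\mathrm{d}\mathrm{vol}_g$ is the Sasaki volume density.

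I expect the only genuine obstacle to be the bookkeeping around escape times --- ensuring that the incompleteness of $\varphi$ at the grazing set, and the passage to the auxiliary closed manifold $\mathcal{M}'$ in \Cref{thm:resolvent}, do not produce a mismatch between $\tau^-_U$ and $\tau^-_{\mathcal{U}}\circ\pi$ --- together with the short verification that $\mathrm{d}\mathrm{vol}_g$ descends to a smooth density on $\mathcal{M}$; everything else is a direct transcription of \Cref{thm:resolvent} and \Cref{prop:pullbacks}.
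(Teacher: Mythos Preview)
Your proof is correct and follows essentially the same approach as the paper: set $U=\pi^{-1}(\mathcal{U})$, establish $\mathbf{R}_U(\lambda)=\pi^*\circ\mathbf{R}_{\mathring{\mathcal{U}}}(\lambda)\circ(\pi^*)^{-1}$ via the escape-time identity $\tau^-_U=\tau^-_{\mathcal{U}}\circ\pi$, and transport the matrix coefficients to the model. Your identification of the pushforward density with the smooth Liouville density $c\,|\alpha_\mathcal{M}\wedge(\mathrm{d}\alpha_\mathcal{M})^{n-1}|$ is in fact more careful than the paper, which simply writes $\mu=\mathrm{d}\mathrm{vol}_g\circ\pi^{-1}$ as a pushforward measure without verifying the smoothness across $\mathcal{G}$ needed to pair $\overline{\tilde g}\,\mu$ with the distribution $\mathbf{R}_{\mathring{\mathcal{U}}}(\lambda)\tilde f$ after meromorphic continuation.
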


\begin{proof}
Let $\mathcal{U}$ be chosen according to \Cref{thm:resolvent} and set $U\defgr \pi^{-1}(\mathcal{U})$. First we note that
\begin{equation*}
\mathbf{R}_U(\lambda) f(x, v) = \int_0^{-\tau^-_\mathcal{U}(\pi(x, v))} \mathrm{e}^{-\lambda t} \left( (\pi^*)^{-1} f \right) (\phi_{-t}(\pi(x, v))) \,\mathrm{d} t ,
\end{equation*}
which yields $\mathbf{R}_U(\lambda) = \pi^* \circ \mathbf{R}_\mathcal{U}(\lambda)\circ (\pi^*)^{-1}$. The first claim then follows from \Cref{thm:resolvent} and \Cref{prop:pullbacks} while the second claim follows from
\begin{equation*}
\begin{split}
\langle \mathbf{R}_U(\lambda) f , g \rangle_{\mathrm{L}^2(\mathrm{d}\mathrm{vol}_g)} &= \langle \mathbf{R}_\mathcal{U}(\lambda) \widetilde{f} , \widetilde{g} \rangle_{\mathrm{L}^2(\mu)} ,
\end{split}
\end{equation*}
where $\widetilde{f} \defgr (\pi^*)^{-1} f$, $\widetilde{g} \defgr (\pi^*)^{-1} g$, and $\mu \defgr \mathrm{d}\mathrm{vol}_g\circ \pi^{-1}$ is the pushforward measure of the measure $\mathrm{d}\mathrm{vol}_g$ along $\pi$, in combination with the meromorphic continuation result in \Cref{thm:resolvent}.
\end{proof}

\begin{remark}
\Cref{cor:Pmeromorphic} could be extended to a formulation analogous to \Cref{thm:resolvent} if one developed a theory of ``billiard distributions'' with the compactly supported smooth billiard functions  as test functions, see the discussion at the end of Section \ref{sec:CBillP}. We restrict our attention to the easier result above, though, as the functional analysis otherwise required would distract too much from the main theme of smooth models and their application.
\end{remark}

\begin{remark}[Continuation of Remark \ref{rem:vector1}]\label{rem:vector2}
\Cref{thm:resolvent} and \Cref{cor:Pmeromorphic} generalize immediately to the vector valued setting described in \Cref{app:bundles}: First we construct the smooth model bundle and the smooth substitute $\mathbb{X}$ for a given operator $\widetilde{\mathbb{X}}$. Then we can immediately replace $\widetilde{\mathbb{X}}$ by $\mathbb{X}$ in the proof of \Cref{thm:resolvent} and \cite{Schuette.2021} provides a simple procedure to obtain a vector valued generalization of the auxiliary vector field $\mathbb{X}_0$. The transition from \Cref{thm:resolvent} to \Cref{cor:Pmeromorphic} happens analogously as above but now relies on \Cref{thm:model_bundle}.
\end{remark}

\subsection{Application: meromorphic continuation of $Z_f$} \label{zeta_continuation}

In this section we derive our second main result, the meromorphic continuation of the following formal \emph{weighted zeta function} associated with a \emph{weight} $f: M\setminus\partial_\mathrm{g} M \rightarrow \mathbb{C}$ and a geodesic billiard system $(\Sigma, g, \Omega)$ whose non-grazing billiard flow $\varphi$ is hyperbolic on its trapped set $K$ in the sense of \Cref{def:hypbilliard}:
\begin{equation} \label{eq:zeta_def}
Z_f(\lambda) \defgr \sum_{\gamma} \left( \frac{\exp(-\lambda T_\gamma)}{\vert \mathrm{det}(\mathrm{id} - \mathcal{P}_\gamma) \vert} \int_{\gamma^\#} f \right),
\end{equation}
where $\lambda\in\C$, the sum runs over all closed trajectories of  $\varphi$, $T_\gamma$ is the period of the closed trajectory $\gamma$, $\gamma^\#$ is the corresponding primitive closed trajectory, and $\mathcal{P}_\gamma$ denotes the linearized Poincar\'{e} map associated with $\gamma$, i.e., given any $t\in [0, T_\gamma]\subset \mathbb{R}$ such that $\gamma(t) \in \mathring{M}$ one defines
\begin{equation*}
\mathcal{P}_\gamma \defgr \mathrm{d}\varphi_{T_\gamma}(\gamma(t)): E_s(\gamma(t))\oplus E_u(\gamma(t)) \longrightarrow E_s(\gamma(t))\oplus E_u(\gamma(t)) ,
\end{equation*}
where the determinant in \eqref{eq:zeta_def} is independent of the chosen $t$ and $E_{u/s}$ denotes the hyperbolic splitting as in \Cref{def:hypbilliard}.

\begin{theorem}[Meromorphic continuation of weighted zeta functions] \label{thm:meromorphic1}
	Let $(\Sigma, g, \Omega)$ be a geodesic billiard system such that the trapped set $K$ of the associated non-grazing billiard flow $\varphi$ is compact. Assume also that $\varphi$ is hyperbolic on $K$ in the sense of Definition \ref{def:hypbilliard} and fix a weight $f\in \mathrm{C}^\infty_{\mathrm{Bill}}(M\setminus \partial_\mathrm{g} M)$. Then the following holds:
	\begin{enumerate}
	\item[i)] The weighted zeta function $Z_f$ defined in \eqref{eq:zeta_def} is a well-defined holomorphic function on $\{\mathrm{Re}(\lambda) \gg 0\}$ that extends meromorphically to  $\C$.
	\item[ii)] Considering a smooth model $(\mathcal{M},\pi,\phi)$ for $\varphi$ as in \Cref{def:smoothmodel} and the setting of \Cref{thm:resolvent}, every pole of $Z_f$ is also a pole of the meromorphically extended resolvent $\mathbf{R}_{\mathring{\mathcal{U}}}(\lambda)$. For each pole $\lambda_0$ of $\mathbf{R}_{\mathring{\mathcal{U}}}(\lambda)$ and $k\in\mathbb{N}_0$ one has the following residue formula:
	\begin{equation} \label{eq:residue_trace}
	\mathrm{Res}_{\lambda = \lambda_0} \big( Z_f(\lambda)(\lambda - \lambda_0)^k \big) = \mathrm{tr}^\flat \left( (\mathbf{X} - \lambda_0)^k \Pi_{\lambda_0} \widetilde{f} \right),
	\end{equation}
	where $\widetilde{f} \defgr \left( \pi^* \right)^{-1} f$ and $\Pi_{\lambda_0}: \mathrm{C}^\infty_\mathrm{c}(\mathring{\mathcal U}) \rightarrow \mathcal{D}'(\mathring{\mathcal U})$ is the finite rank operator of \Cref{thm:resolvent}.
	\item[iii)] The poles of $Z_f$ with  $f\equiv 1$ coincide with the poles of the resolvent $\mathbf{R}_{\mathring{\mathcal{U}}}(\lambda)$ of ii). 
	\end{enumerate}
\end{theorem}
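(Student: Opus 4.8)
\emph{Overview.} The plan is to transport the problem to a smooth model, reduce to the open‑hyperbolic‑system setting of \cite{Dyatlov.2016a}, and then combine the Guillemin--Atiyah--Bott trace formula with the meromorphically continued resolvent of \Cref{thm:resolvent}.

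\emph{Step 1: reduction to the smooth model and to an open hyperbolic system.} Fix a smooth model $(\mathcal{M},\pi,\phi)$ for $\varphi$ and set $\widetilde{f}\defgr(\pi^\ast)^{-1}f\in\mathrm{C}^\infty(\mathcal{M})$, which exists by \Cref{prop:pullbacks}. By \Cref{prop:trajectory_bijection} the map $\gamma\mapsto\pi\circ\gamma$ is a period‑preserving bijection between closed trajectories of $\varphi$ and of $\phi$, and by \Cref{thm:hypsmooth} the hyperbolic splitting over $\mathcal{K}$ is the $\mathrm{d}(\pi|_{\mathring{M}})$‑image of the one over $K$; hence for corresponding closed orbits $T_\gamma=T_{\pi\gamma}$, $T_\gamma^\#=T_{\pi\gamma}^\#$, the Poincar\'e maps are conjugate so that $|\det(\mathrm{id}-\mathcal{P}_\gamma)|=|\det(\mathrm{id}-\mathcal{P}_{\pi\gamma})|$, and $\int_{\gamma^\#}f=\int_{(\pi\gamma)^\#}\widetilde{f}$ since $f=\widetilde{f}\circ\pi$ and $\pi$ intertwines the flows by \eqref{eq:flowcompat}. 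Thus $Z_f(\lambda)$ coincides term by term with the weighted zeta function $Z^\phi_{\widetilde{f}}(\lambda)$ of $\phi$. Next take $\mathcal{U}\subset\mathring{\mathcal{U}}_0$, the vector field $\mathbf{X}_0$ and its flow $\phi^0$ as in \Cref{thm:resolvent} and \Cref{lem:strictly_convex}: then $(\mathcal{U}_0,\mathbf{X}_0)$ is an open hyperbolic system in the sense of Section~\ref{zeta_open}, $\mathbf{X}=\mathbf{X}_0$ on $\mathcal{U}$, and the trapped set of $\phi^0$ equals $\mathcal{K}$. Every closed trajectory of $\phi$ is compact and $\phi$‑invariant, hence lies in $\mathcal{K}\subset\mathring{\mathcal{U}}$ where $\phi=\phi^0$; conversely a closed trajectory of $\phi^0$ contained in $\mathcal{U}_0$ lies in its trapped set $\mathcal{K}$ and so is one of $\phi$. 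Therefore $Z^\phi_{\widetilde{f}}(\lambda)$ is the weighted zeta function of the open hyperbolic system $(\mathcal{U}_0,\mathbf{X}_0)$ with weight $\widetilde{f}|_{\mathcal{U}_0}$, and it remains to treat that case.

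\emph{Step 2: Guillemin trace formula and holomorphy.} For $t>0$ the pullback $(\phi^0_{-t})^\ast$ has Schwartz kernel supported on the graph of $\phi^0_{-t}$, which meets the diagonal only over closed orbits of period $t$; the classical localisation of Guillemin near periodic orbits (cf.\ \cite[Section~4]{Dyatlov.2016a}, \cite[Section~2.4]{Dyatlov.2016}) then shows that, with $M_{\widetilde{f}}$ the operator of multiplication by $\widetilde{f}$, the flat trace $\zeta(t)\defgr\mathrm{tr}^\flat(M_{\widetilde{f}}(\phi^0_{-t})^\ast)$ is a well‑defined distribution on $(0,\infty)$ equal to $\sum_\gamma(\int_{\gamma^\#}\widetilde{f})\,|\det(\mathrm{id}-\mathcal{P}_\gamma)|^{-1}\delta(t-T_\gamma)$, the sum over all closed orbits. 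Standard exponential bounds on the number of closed orbits of bounded period and on $|\det(\mathrm{id}-\mathcal{P}_\gamma)|^{-1}$, both consequences of compactness of $\mathcal{K}$ and hyperbolicity, show that $\int_0^\infty\mathrm{e}^{-\lambda t}\zeta(t)\,\mathrm{d}t$ converges for $\mathrm{Re}(\lambda)\gg0$ and equals $Z_{\widetilde{f}}(\lambda)$, proving the first assertion of i).

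\emph{Step 3: meromorphic continuation and residues.} Fix $0<T_0$ smaller than the minimal period. Splitting the defining integral of the resolvent at $T_0$ shows, for $\mathrm{Re}(\lambda)\gg0$, that $\int_{T_0}^\infty\mathrm{e}^{-\lambda t}(\phi^0_{-t})^\ast\,\mathrm{d}t$ equals both $\mathbf{R}_{\mathring{\mathcal{U}}}(\lambda)-\mathbf{Q}(\lambda)$, where $\mathbf{Q}(\lambda)\defgr\int_0^{T_0}\mathrm{e}^{-\lambda t}(\phi^0_{-t})^\ast\,\mathrm{d}t$ is entire in $\lambda$ and carries the diagonal part of the wavefront set of $\mathbf{R}_{\mathring{\mathcal{U}}}(\lambda)$, and $\mathrm{e}^{-\lambda T_0}\,\mathbf{R}_{\mathring{\mathcal{U}}}(\lambda)\circ(\phi^0_{-T_0})^\ast$; here one uses $\mathbf{X}=\mathbf{X}_0$ on $\mathcal{U}$ and the dynamical convexity of $\mathcal{U}$. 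From the first expression, composition with $M_{\widetilde{f}}$ together with the wavefront estimates \eqref{eq_wavefront1}--\eqref{eq_wavefront2} of \Cref{thm:resolvent} makes the flat trace well‑defined and meromorphic in $\lambda$; since $\zeta$ is supported in $[T_{\min},\infty)\subset(T_0,\infty)$ it equals $Z_{\widetilde{f}}(\lambda)$ for $\mathrm{Re}(\lambda)\gg0$, so $Z_f$ continues meromorphically to $\mathbb{C}$ with poles inside the discrete, $f$‑independent pole set of $\mathbf{R}_{\mathring{\mathcal{U}}}(\lambda)$; this gives i) and the first part of ii). For the residue formula one Laurent‑expands $\mathbf{R}_{\mathring{\mathcal{U}}}(\lambda)$ at a pole $\lambda_0$ — its principal part is built from the operators $(\mathbf{X}-\lambda_0)^{j-1}\Pi_{\lambda_0}$ via $(\mathbf{X}+\lambda)\mathbf{R}_{\mathring{\mathcal{U}}}(\lambda)=\mathrm{id}$ and the nilpotence of the relevant operator on the finite‑dimensional range of $\Pi_{\lambda_0}$ — takes the flat trace against $M_{\widetilde{f}}$ using the second expression, and verifies that the factors $\mathrm{e}^{-\lambda T_0}$ and $(\phi^0_{-T_0})^\ast$ together cancel all dependence on $T_0$ on the generalised eigenspace (using cyclicity of $\mathrm{tr}^\flat$ for the operators involved). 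Extracting the coefficient of $(\lambda-\lambda_0)^{-k-1}$ yields \eqref{eq:residue_trace}, where one notes that $\mathbf{X}=\mathbf{X}_0$ near $\mathcal{K}$ and that $\mathcal{K}$ contains the diagonal support of $\Pi_{\lambda_0}$.

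\emph{Step 4: part iii) and the main obstacle.} Applying \eqref{eq:residue_trace} with $k=0$ and $f\equiv1$ gives $\mathrm{Res}_{\lambda=\lambda_0}Z_1=\mathrm{tr}^\flat(\Pi_{\lambda_0})$ at any pole $\lambda_0$ of $\mathbf{R}_{\mathring{\mathcal{U}}}$. Since $\Pi_{\lambda_0}$ is a non‑zero finite‑rank idempotent (the spectral projector onto the generalised resonant states) with $\mathrm{WF}'(\Pi_{\lambda_0})\subset E^\ast_+\times E^\ast_-$ by \eqref{eq_wavefront1}, the restriction of its Schwartz kernel to the diagonal is legitimate and $\mathrm{tr}^\flat(\Pi_{\lambda_0})=\mathrm{tr}(\Pi_{\lambda_0})=\mathrm{rank}(\Pi_{\lambda_0})\geq1$; hence every pole of $\mathbf{R}_{\mathring{\mathcal{U}}}$ is a pole of $Z_1$, which together with ii) gives iii). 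I expect the main obstacle to be the microlocal bookkeeping of Steps~2--3: ensuring that the relevant flat traces are well‑defined, that they commute with the meromorphic continuation of $\mathbf{R}_{\mathring{\mathcal{U}}}(\lambda)$, and that the diagonal wavefront contributions of $\mathbf{R}_{\mathring{\mathcal{U}}}(\lambda)$ and $\mathbf{Q}(\lambda)$ cancel in their difference — all of which rests on the precise wavefront estimates of \cite[Thm.~2, Lemma~3.5]{Dyatlov.2016a} recalled in \eqref{eq_wavefront1}--\eqref{eq_regular_wf} — together with tracking the sign and normalisation conventions so as to land exactly on \eqref{eq:residue_trace}.
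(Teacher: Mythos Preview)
Your proposal is correct and follows the same approach as the paper. The only difference is one of packaging: the paper's proof is very short because it delegates Steps~2--3 entirely to \cite{Schuette.2021} (applied to the open hyperbolic system $(\mathcal{U}_0,\mathbf{X}_0)$ with weight $\widetilde{f}$), whereas you unpack that black box and sketch the Guillemin trace formula and the resolvent splitting argument explicitly. Your Step~1 reduction via \Cref{prop:trajectory_bijection} and \Cref{thm:hypsmooth}, and your Step~4 using $\mathrm{tr}^\flat(\Pi_{\lambda_0})=\mathrm{rk}(\Pi_{\lambda_0})$ from \cite[proof of Thm.~4]{Dyatlov.2016a}, match the paper exactly.
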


\begin{proof}
We begin by observing that the weighted zeta function $Z_f$ for $\varphi$ coincides with the weighted zeta function $Z_{\tilde{f}}$ for the smooth model flow $\phi$ and $\widetilde{f} \defgr (\pi^*)^{-1} f$ by \Cref{prop:trajectory_bijection}, where the latter zeta function is again defined by Formula \eqref{eq:zeta_def}. We can therefore employ \cite{Schuette.2021} where meromorphic continuation of $Z_{\tilde{f}}$ to $\C$ gets proven by writing the weighted zeta function as the flat trace over the resolvent. The result applies if we can verify that $\phi$ is hyperbolic on $\mathcal{K}$ because $Z_{\widetilde{f}}$ coincides with the weighted zeta function for $\mathbf{X}_0$ of Lemma \ref{lem:strictly_convex}. But now \Cref{thm:hypsmooth} implies hyperbolicity therefore proving \emph{i)}.
	
The claims of \emph{ii)} also follow directly from an application of \cite{Schuette.2021} to the zeta function associated with the flow of $\mathbf{X}_0$ and weight $\widetilde{f}$ combined with the observation that $\mathbf{X} = \mathbf{X}_0$ on $\mathrm{ran}(\Pi_{\lambda_0}) \subset \mathcal{D}'(\mathring{\mathcal{U}})$.

The final statement \emph{iii)} follows from the observation that $\mathrm{Res}_{\lambda = \lambda_0} \big( Z_{1}(\lambda) \big) = \mathrm{tr}^\flat(\Pi_{\lambda_0})=\mathrm{rk}(\Pi_{\lambda_0})$, where the second relation is shown in \cite[proof of Thm.\ 4]{Dyatlov.2016a}.
\end{proof}

\begin{cor}\label{cor:Uindp} The set of poles of the meromorphically continued resolvent in \Cref{thm:resolvent} is independent of the choice of the set $\mathcal U$. Consequently, the set of poles of the matrix coefficients in \Cref{cor:Pmeromorphic} is independent of the choice of the set $U$.
\end{cor}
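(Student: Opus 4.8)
The plan is to reduce the independence statement to \Cref{thm:meromorphic1}~\emph{iii)}, which identifies the set of poles of the meromorphically continued resolvent $\mathbf{R}_{\mathring{\mathcal{U}}}(\lambda)$ with the set of poles of the weighted zeta function $Z_f$ for the constant weight $f\equiv 1$. Now $Z_1$, as defined in \eqref{eq:zeta_def}, only involves the closed trajectories of the non-grazing billiard flow $\varphi$ together with their periods, primitive periods and linearized Poincar\'e maps; in particular it makes no reference to the auxiliary set $\mathcal{U}$ nor, by \Cref{prop:trajectory_bijection}, to the chosen smooth model. Hence $Z_1$, and therefore its pole set, is literally the same object for every admissible choice of $\mathcal{U}$ in \Cref{thm:resolvent}, which proves the first assertion.

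For the second assertion I would recall from the proof of \Cref{cor:Pmeromorphic} that, with $U=\pi^{-1}(\mathcal{U})$, one has $\mathbf{R}_U(\lambda)=\pi^\ast\circ\mathbf{R}_{\mathcal{U}}(\lambda)\circ(\pi^\ast)^{-1}$ and $\langle\mathbf{R}_U(\lambda)f,g\rangle_{\mathrm{L}^2(\mathrm{d}\mathrm{vol}_g)}=\langle\mathbf{R}_{\mathcal{U}}(\lambda)\widetilde f,\widetilde g\rangle_{\mathrm{L}^2(\mu)}$, where $\widetilde f=(\pi^\ast)^{-1}f$, $\widetilde g=(\pi^\ast)^{-1}g$ and $\mu=\mathrm{d}\mathrm{vol}_g\circ\pi^{-1}$. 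One inclusion is immediate: every pole of such a matrix coefficient is a pole of $\mathbf{R}_{\mathring{\mathcal{U}}}(\lambda)$, hence lies in the $\mathcal{U}$-independent set from the first part. For the reverse inclusion, suppose $\lambda_0$ is not a pole of any matrix coefficient. Since $\widetilde g\mapsto g=\pi^\ast\widetilde g$ is a bijection onto $\mathrm{C}^\infty_{\mathrm{Bill},\mathrm{c}}(\mathring U)$ by \Cref{prop:pullbacks}, the scalar functions $\langle\mathbf{R}_{\mathcal{U}}(\lambda)\widetilde f,\widetilde g\rangle_{\mathrm{L}^2(\mu)}$ are then holomorphic near $\lambda_0$ for all $\widetilde f,\widetilde g\in\mathrm{C}^\infty_\mathrm{c}(\mathring{\mathcal{U}})$. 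Provided $\mu$ is a smooth positive density on $\mathcal{M}$, the densities $\widetilde g\,\mu$ exhaust the compactly supported smooth densities on $\mathring{\mathcal{U}}$, so the principal part of every distributional pairing of $\mathbf{R}_{\mathring{\mathcal{U}}}(\lambda)$ vanishes at $\lambda_0$; as $\mathbf{R}_{\mathring{\mathcal{U}}}(\lambda)$ is already known to be meromorphic, this forces the principal part of its operator Laurent expansion at $\lambda_0$ to vanish, i.e.\ $\lambda_0$ is not a pole of $\mathbf{R}_{\mathring{\mathcal{U}}}(\lambda)$. Hence the two pole sets coincide, and the first part applies.

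The main obstacle is the smoothness of $\mu$ near the gluing region $\mathcal{G}$; away from $\mathcal{G}$ it is clear since $\pi|_{\mathring M}$ is a diffeomorphism. Near $\mathcal{G}$ I would work in the flow chart $\Phi$ of \Cref{sec:smoothstructure}. Using that $\mathrm{d}\mathrm{vol}_g$ agrees up to a constant $c_n$ with the contact volume $\alpha\wedge(\mathrm{d}\alpha)^{n-1}$ on $S\Sigma$ and the identity $(\varphi^g|_N)^\ast\alpha=\mathrm{d}t+\beta$ from \eqref{eq_contact1} with $\beta\defgr\alpha|_{\partial M\setminus\partial_\mathrm{g}M}$, one gets $(\varphi^g|_N)^\ast\mathrm{d}\mathrm{vol}_g=c_n\,|\mathrm{d}t\wedge(\mathrm{d}\beta)^{n-1}|$, the summand $\beta\wedge(\mathrm{d}\beta)^{n-1}$ vanishing for dimension reasons. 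Expressing $\Phi$ on the half-charts $\{t<0\}$ and $\{t>0\}$ through $\varphi^g$ and $\varphi^g\circ\widetilde R$ respectively via \Cref{lem:convexity}~\emph{vi)}, and using $R^\ast\beta=\beta$ from \Cref{lem:alphainv} so that the tangential reflection preserves $|\mathrm{d}t\wedge(\mathrm{d}\beta)^{n-1}|$, one checks that $\Phi^\ast\mu$ equals the single smooth density $c_n\,|\mathrm{d}t\wedge(\mathrm{d}\beta)^{n-1}|$ on both half-charts, hence extends smoothly across $\{t=0\}=\Phi^{-1}(\mathcal{G})$. This density computation is the crux; the rest is bookkeeping with the conjugation by $\pi^\ast$ and \Cref{thm:meromorphic1}~\emph{iii)}.
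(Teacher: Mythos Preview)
Your proof of the first assertion is exactly the paper's one-line argument: by \Cref{thm:meromorphic1}~\emph{iii)} the poles of $\mathbf{R}_{\mathring{\mathcal{U}}}(\lambda)$ coincide with those of $Z_1$, and $Z_1$ is intrinsic to the billiard flow and makes no reference to $\mathcal U$.

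For the second assertion you do strictly more than the paper. The paper reads ``the set of poles of the matrix coefficients'' as the discrete \emph{containing} set produced in \Cref{cor:Pmeromorphic}, which by construction (via the identity $\langle\mathbf{R}_U(\lambda)f,g\rangle=\langle\mathbf{R}_{\mathcal U}(\lambda)\widetilde f,\widetilde g\rangle_{\mathrm{L}^2(\mu)}$ in that proof) is the pole set of $\mathbf{R}_{\mathring{\mathcal{U}}}(\lambda)$; the second assertion is then an immediate consequence of the first, with no further argument. You instead interpret the phrase as the \emph{union} of actual poles over all $f,g$ and prove the stronger fact that this union coincides with the resolvent pole set, which requires your smoothness argument for the pushforward density $\mu$. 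That density computation is correct and essentially repeats the contact-form calculation in the proof of \Cref{cor:main}: the inputs are \eqref{eq_contact1}, \Cref{lem:alphainv}, and the standard identification of the Sasaki volume with a constant multiple of the contact volume $|\alpha\wedge(\mathrm d\alpha)^{n-1}|$; positivity of $|\mathrm dt\wedge(\mathrm d\beta)^{n-1}|$ follows because $\varphi^g|_N$ is a diffeomorphism onto an open set in $S\Sigma$. So your argument yields a slightly sharper conclusion (tightness of the containing set in \Cref{cor:Pmeromorphic}) at the cost of extra work that the paper does not undertake.
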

\begin{proof}
This follows immediately from \emph{iii)} in \Cref{thm:meromorphic1} as $Z_1$ is independent of $\mathcal U$.
\end{proof}
\begin{remark}\label{rem:Thm1cor}
 For a billiard in Euclidean space $\R^n$ whose obstacles are strictly convex and fulfill the no-grazing condition (or even the stronger no-eclipse condition, see Section~\ref{intro}) Lemma~\ref{lem:compact_trapped} assures the compactness of $K$ and Remark~\ref{remark:hyp1} the hyperbolicity of $\varphi$ on $K$. As a special case of Corollaries~\ref{cor:Pmeromorphic} and \ref{cor:Uindp} we thus obtain Theorem~\ref{thm:intro} announced in the introduction.
\end{remark}
\begin{remark}
	The \emph{(ordinary) zeta function} associated with the (non-grazing or Euclidean) billiard flow $\varphi$ is defined as
	\begin{equation*}
	\zeta(\lambda) \defgr \prod_{\gamma^\#} \left( 1 - \mathrm{e}^{-\lambda T_{\gamma^\#}} \right),
	\end{equation*}
	with the product running over all closed primitive trajectories of $\varphi$. Note that, under the assumption that the stable/unstable foliations $\mathcal{E}_{s/u}$ constructed in the proof of \Cref{thm:meromorphic1} are \emph{orientable}, $\zeta(\lambda)$ also continues meromorphically to $\C$ (provided the no-grazing condition holds): We can employ the same proof as for \Cref{thm:meromorphic1} but instead of using the continuation result of \cite{Schuette.2021} we directly invoke \cite[Thm.~3]{Dyatlov.2016a}. The orientability condition can actually be dropped, see \cite{Shen.2020}. This generalizes a result of Morita \cite{Morita} who proved meromorphic continuation to a right halfplane.
\end{remark}

\begin{remark}[Conclusion of Remarks \ref{rem:vector1} and \ref{rem:vector2}]\label{rem:vector3}
	Using the vector-valued versions of \Cref{thm:resolvent} and \Cref{cor:Pmeromorphic} we obtain the meromorphic continuation to $\C$ of weighted zeta functions associated with vector-valued data as in \cite[Thm.~4]{Dyatlov.2016a} and \cite{Schuette.2021}:
	\begin{equation*}
	Z_f^{\widetilde{\mathbb{X}}, \kappa}(\lambda) \defgr \sum_{\gamma} \left( \frac{\exp(-\lambda T_\gamma) \mathrm{tr}(\widetilde{\alpha}_\gamma)}{\vert \mathrm{det}(\mathrm{id} - \mathcal{P}_\gamma) \vert} \int_{\gamma^\#} f \right),
	\end{equation*}
	where $\mathrm{tr}(\widetilde{\alpha}_\gamma) = \mathrm{tr}(\widetilde{\alpha}_{\gamma(t), T_\gamma})$ denotes the trace of the billiard parallel transport along a closed trajectory $\gamma$, which is independent of the chosen base point $\gamma(t)$. Note that $\widetilde{\alpha}$ depends crucially on both the operator $\widetilde{\mathbb{X}}$ as well as the boundary gluing map $\kappa$ as indicated by the superscripts.
	
	This proves very useful in practice, as for example using a line bundle whose restriction to $\mathring M$ is trivial and which possesses a twist at each obstacle boundary, one can treat dynamical zeta functions involving a sign depending on the number of reflections occurring in the closed trajectories. Similarly one can use the vector-valued setting to treat zeta functions with the modified denominator $\sqrt{\mathrm{id} - \mathcal{P}_\gamma}$. Such zeta functions are of particular interest as they exhibit connections with quantum resonances for obstacle scattering. They have been treated in the literature before, see e.g. \cite{Petkov2008,ChaubetPetkovBilliards}. 
\end{remark}

\begin{remark}
	Finally, we would like to point out that in principle the constructions and results of this paper also apply (with the assumption of a compact trapped set on which the non-grazing billiard flow is hyperbolic) in a more general abstract situation  where instead of the geodesic flow $\varphi^g$ on the unit tangent bundle  $S\Sigma\supset M$ and the Riemannian tangential reflection $R:\partial M\to \partial M$ one considers some arbitrary smooth flow $\varphi^0$ on an arbitrary smooth manifold $M$ with boundary $\partial M$ together with an abstract reflection map $R:\partial M\to \partial M$ that satisfies certain natural axioms such as compatibility with $\varphi^0$ which allow to define the non-grazing billiard flow $\varphi$ associated with $\varphi^0$ and $R$ in the same way as above. However, as we do not know an interesting example of such a setup that is not equivalent to a Riemannian billiard, we do not pursue the more abstract approach here. 
\end{remark}

\appendix

\section{Technical proofs}\label{sec:proofs}
Here we provide some rather technical proofs to avoid interruptions of the text flow.

\subsection{Proof of Lemma \ref{lem:convexity}}\label{sec:proof1}
First, we introduce a smooth involution $\widetilde{R}: \mathbb{R}\times \partial M\setminus \partial_\mathrm{g} M \to \mathbb{R}\times \partial M\setminus \partial_\mathrm{g} M$ by putting $\widetilde{R}(t,x,v) \defgr (t,x,v')$. Due to the transversality between the vectors in $\partial M\setminus \partial_\mathrm{g}M$ and $T (\partial\Omega)$ the inverse function theorem implies that the geodesic flow
\[
\varphi^g:\R\times (\partial M\setminus \partial_\mathrm{g}M)\to S\Sigma 
\]
restricts to a diffeomorphism from an open neighborhood $N$ of $\{0\}\times (\partial M\setminus \partial_\mathrm{g}M)$ in $\R\times (\partial M\setminus \partial_\mathrm{g}M)$ onto an open neighborhood of  $\partial M\setminus \partial_\mathrm{g}M$ in $S\Sigma $. By replacing $N$ with its intersection with the open set $(\varphi^{g})^{-1}(S\Sigma\setminus \partial_\mathrm{g}M)$ we achieve that $\varphi^g(N)$ is disjoint from  $\partial_\mathrm{g}M$. By shrinking $N$ further we achieve that $N$ is $\widetilde{R}$-invariant and that the intersection of $N$ with each connected component of $\R\times (\partial M\setminus \partial_\mathrm{g}M)$ is connected. Then a point $(t,x,v)\in \R\times (\partial M\setminus \partial_\mathrm{g}M)$ with $t\geq 0$ lies in $N$ iff all points $(s,x,v)$ and $(s,x,v')$ with $s\in [0,t]$ lie in $N$, and similarly for $t<0$.

Now for every $(x,v)\in  \partial_\mathrm{in}M$ and small $t > 0$ we have that $\varphi^g_t(x,v)\not \in M$, while for every $(x,v)\in \partial_\mathrm{out} M$ and small $t<0$ we have that $\varphi^g_t(x,v)\not \in M$. This shows that 
\bq
\begin{aligned}
\{(t,x,v)\in N\,|\, \varphi^g(t,x,v)\in M\}&=N^+_\mathrm{out}\cup N^-_\mathrm{in},\\
 \{(t,x,v)\in N\,|\, \varphi^g(t,x,v')\in M\}&=N^-_\mathrm{out}\cup N^+_\mathrm{in}.\label{eq:Ninoutpm}
\end{aligned}
\eq
Recalling \eqref{eq:tpm} and taking into account that $N\subset\R\times (\partial M\setminus \partial_\mathrm{g}M)$ is open and the projection $\R\times (\partial M\setminus \partial_\mathrm{g}M) \to \R$ is an open map, \eqref{eq:Ninoutpm} shows that every point $(t,x,v)\in N$ satisfies $t_-(x,v)<t<t_+(x,v)$. This proves the inclusion $N\subset D$ and recalling \eqref{eq:localflow2} we get \eqref{eq:varphivarphig}.

Let $(t_1,x_1,v_1),(t_2,x_2,v_2)\in N_\mathrm{in}$ satisfy $\varphi_{t_1}(x_1,v_1)=\varphi_{t_2}(x_2,v_2)$. Then by  \eqref{eq:Ninoutpm} the signs of $t_1$ and $t_2$ must coincide and by \eqref{eq:varphivarphig} we have
\[
\begin{cases}\varphi^g_{t_1}(x_1,v_1)=\varphi^g_{t_2}(x_2,v_2),\qquad &t_1\leq 0,t_2\leq 0,\\
\varphi^g_{t_1}(x_1,v_1')=\varphi^g_{t_2}(x_2,v_2'), &t_1> 0,t_2> 0.\end{cases}
\]
We conclude that $(x_1,v_1) = (x_2,v_2)$ because $\varphi^g$ and $\varphi^g\circ \widetilde{R}$ are injective on $N_\mathrm{in}$. This proves that $\varphi$ is injective on $N_\mathrm{in}$. We argue analogously for $N_\mathrm{out}$.  

That $\varphi:N\to M$ is open follows once we know that $\varphi(N_\mathrm{in/out})$ are open in $M$ and $\varphi|_{N_\mathrm{in/out}}^{-1}:\varphi(N_\mathrm{in/out})\to N_\mathrm{in/out}$ are smooth, because $N=N_\mathrm{in}\sqcup N_\mathrm{out}$. By \eqref{eq:varphivarphig}, combined with the fact that $\widetilde{R}$ interchanges $N_\mathrm{in}$ with $N_\mathrm{out}$ as well as $N^\pm_\mathrm{in}$ with $N^\pm_\mathrm{out}$, one has
\bq
\begin{aligned}
\varphi(N^-_\mathrm{in}) &= M\cap \varphi^g(N_\mathrm{in}), &  \varphi(N^+_\mathrm{out}) &= M\cap \varphi^g(N_\mathrm{out}),\\
\varphi(N_\mathrm{in}\setminus N^-_\mathrm{in}) &= M\cap \varphi^g(N_\mathrm{out}\setminus N^-_\mathrm{out}),\qquad&  \varphi(N_\mathrm{out}\setminus N^+_\mathrm{out}) &= M\cap \varphi^g(N_\mathrm{in}\setminus N^+_\mathrm{in}).\label{eq:foursets}
\end{aligned}
\eq 
This shows that the sets on the left-hand sides of the equalities are open in $M$ because they are intersections with $M$ of open subsets of $S\Sigma$, $\varphi^g:N\to S\Sigma$ being an open map. As we already know that $\varphi$ is injective on $N_\mathrm{in/out}$, we now see that each of the sets $\varphi(N_\mathrm{in/out})\subset M$ decomposes into a disjoint union of two open subsets of $M$ as follows:
\[
\varphi(N_\mathrm{in})= \varphi(N^-_\mathrm{in})\sqcup \varphi(N_\mathrm{in}\setminus N^-_\mathrm{in}),\qquad  \varphi(N_\mathrm{out})= \varphi(N^+_\mathrm{out})\sqcup \varphi(N_\mathrm{out}\setminus N^+_\mathrm{out}).
\] 
To prove that $\varphi|_{N_\mathrm{in}}^{-1}$ is a smooth we use that $(\varphi^g)^{-1}:\varphi^g(N)\to N$ and $(\varphi^g\circ \widetilde{R})^{-1}:\varphi^g(N)\to N$ are smooth: By \eqref{eq:varphivarphig} the map $\varphi|_{N_\mathrm{in}}^{-1}$ coincides with $(\varphi^g)^{-1}$ on $\varphi(N^-_\mathrm{in})$ and with $(\varphi^g\circ \widetilde{R})^{-1}$ on $\varphi(N_\mathrm{in}\setminus N^-_\mathrm{in})$. The latter two disjoint open sets cover $ \varphi(N_\mathrm{in})$ proving that $\varphi|_{N_\mathrm{in}}^{-1}$ is smooth. We argue analogously for $\varphi|_{N_\mathrm{out}}^{-1}$.

Finally, we note that $\varphi(N)$ is disjoint from  $\partial_\mathrm{g}M$ since $\varphi^g(N)$ is disjoint from  $\partial_\mathrm{g}M$. \qed

\subsection{Proof of Lemma \ref{lem:D}}\label{sec:proof2}
Given $(t,x, v)\in \varphi^{-1}(O)$ with $t\geq 0$, suppose first that the compact trajectory segment $\varphi([0,t]\times \{(x,v)\})$ lies in $\mathring M$, so that it agrees with a trajectory segment of the geodesic flow: $\varphi([0,t]\times \{(x,v)\})=\varphi^g([0,t]\times \{(x,v)\})\subset\mathring M$. Then it follows from the continuity of $\varphi^g$ that there is an $\eps>0$  and an open set $U\subset \mathring M$ containing $(x,v)$ such that $\varphi^g([t-2\eps,t+2\eps]\times U)\subset \mathring M\cap O$. Now $t_-(\tilde x,\tilde v)<  t-\eps$, $t_+(\tilde x,\tilde v)>t+\eps$ for all $(\tilde x,\tilde v)\in U$. It follows that $(t-\eps,t+\eps)\times U\subset \varphi^{-1}(O)$, so that  $(t,x,v)$ is an interior point of $\varphi^{-1}(O)$.  

As a second case, suppose that $t>0$, $\varphi([0,t)\times \{(x,v)\})\subset\mathring M$, and $(x_0,v_0):=\varphi_t(x,v)\in (\partial M\setminus \partial_\mathrm{g} M)\cap O$. Then $\varphi_t(x,v)\in \partial_\mathrm{in} M\cap O$ because the trajectory is incoming.  Recalling the definition of $\varphi$ departing from \eqref{eq:localflow1}, the half-open trajectory segment $\varphi([0,t)\times \{(x,v)\})$ agrees with a  trajectory segment of the geodesic flow:  $\varphi([0,t)\times \{(x,v)\})=\varphi^g([0,t)\times \{(x,v)\})\subset\mathring M$. Let $N_\mathrm{in}\subset\R\times \partial_\mathrm{in} M$ be an open set as in Lemma \ref{lem:convexity}. Then, since $N_\mathrm{in}$ is a neighborhood of $\{0\}\times \partial_\mathrm{in} M$ in $\R\times \partial_\mathrm{in} M$ and by the continuity of $\varphi^g$ we can choose a small open subset $S_\mathrm{in}\subset \partial_\mathrm{in} M$ and a small $\delta>0$ such that $S_\mathrm{in}$ contains $(x_0,v_0)=\varphi_t(x,v)$, $\overline S_\mathrm{in}\subset \partial_\mathrm{in} M\cap O$ is compact, $(-\delta,\delta)\times \overline S_\mathrm{in}\subset N_\mathrm{in}$, and $\varphi^g((-\delta, 0] \times S_\mathrm{in})\subset  O$. Then \eqref{eq:varphivarphig} gives us $\varphi((-\delta,0]\times S_\mathrm{in})=\varphi^g((-\delta,0]\times S_\mathrm{in})\subset  O$. In addition we introduce the open set $S_\mathrm{out}:=\{(x,v')\,|\, (x,v)\in S_\mathrm{in}\}\subset \partial_\mathrm{out} M$ which contains $(x_0,v_0')$. Then by the reflection-symmetry of $(\partial M\setminus \partial_\mathrm{g} M)\cap O$  we have $\overline{S}_\mathrm{out}\subset\partial_\mathrm{out} M\cap O$. Using again the continuity of $\varphi^g$ and \eqref{eq:varphivarphig} we can achieve, shrinking $\delta$ if necessary, that $\varphi^g([0,\delta)\times S_\mathrm{out})=\varphi([0,\delta)\times S_\mathrm{out})\subset  O$. 

Now, the continuity of  $\varphi^g$ and the fact that $\varphi((-\delta,\delta)\times S_\mathrm{in})\cap \mathring M$ is open in $S\Sigma$  by Lemma \ref{lem:convexity}  \emph{iii)} imply that there is a small $\eps\in (0,t)$ and a small open set $U\subset \mathring M$ containing $(x,v)$ such that $\varphi^g([0,t-\eps]\times U)\subset \mathring M$ and $\varphi_{t-\eps}^g(U)\subset  \varphi((-\delta,0)\times S_\mathrm{in})\cap O$. Then by \eqref{eq:localflow1} we have $[0,t-\eps]\times U\subset D$ and $\varphi([0,t-\eps]\times U)=\varphi^g([0,t-\eps]\times U)$.  See Figure \ref{fig32} for an illustration.

\begin{figure}[h]
\centering
\includegraphics[width=\textwidth, trim={0cm 0cm 0cm 0cm}, clip]{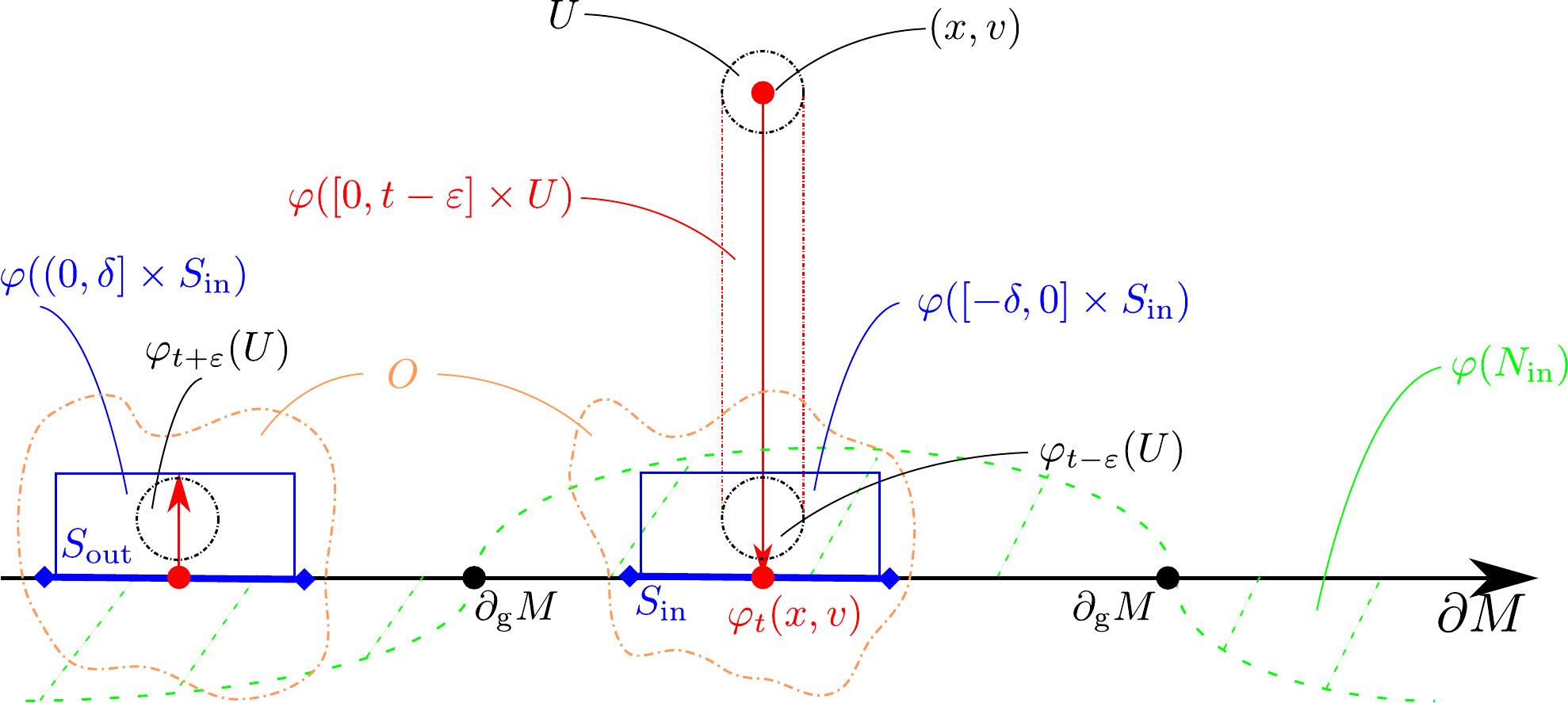}
\caption{Illustration of the argument in the proof of \Cref{lem:D}. The transversality properties of $\varphi$ established in Lemma \ref{lem:convexity} allow us to extend the trajectories of all points in $U$ to the time interval $[0,t+\eps)$.}
\label{fig32}
\end{figure}

Lemma \ref{lem:convexity} \emph{iv)} and the flow property $\varphi_s(\varphi_{s'}(\tilde x,\tilde v))=\varphi_{s+s'}(\tilde x,\tilde v)$ now imply the inclusion $(t-\eps,t+\eps)\times U\subset \varphi^{-1}(O)$, so that $(t,x,v)$ is an interior point of $\varphi^{-1}(O)$.  

If the trajectory segment $\varphi([0,t]\times \{(x,v)\})$ does not lie entirely in $\mathring M$, we can cut it into finitely many segments each of which lies either entirely in $\mathring M$ or intersects $\partial M\setminus \partial_\mathrm{g} M$ precisely in one of its endpoints. It then suffices to  repeat the arguments above inductively finitely many times to show that $(t,x, v)$ is an interior point of $\varphi^{-1}(O)$. The case $t<0$ is treated analogously, finishing the proof that $\varphi^{-1}(O)$ is open.

Finally, \eqref{eq:Dinv} follows immediately from a combination of \eqref{eq:invariance} with the presupposed reflection-symmetry of $O\cap (\partial M\setminus \partial_\mathrm{g} M)$. This completes the proof. \qed

\subsection{Proof of Proposition \ref{prop:uniqueness}}\label{sec:proof3}
The proof of Proposition \ref{prop:uniqueness} relies on the following auxiliary result which describes some general technical properties of smooth models for the non-grazing  billiard flow:
\begin{lem}\label{lem:G} Let $(\M,\pi,\phi)$ be a smooth model for $\varphi$. Then $\pi$ is a proper map,  one has
 \[
\pi(\partial_\mathrm{in}M)=\pi(\partial_\mathrm{out}M) \defgl \G,\qquad\qquad \M=\pi(\mathring M)\sqcup \G,
\]
the set $\G$ is a codimension $1$ submanifold of $\M$, and the maps $\pi|_{\partial_\mathrm{in/out}M}:\partial_\mathrm{in/out}M\to \G$ are diffeomorphisms. In particular, for every set $\mathcal A\subset \M$ the  set $\pi^{-1}(\mathcal A)\cap (\partial M\setminus \partial_\mathrm{g} M)$ is reflection-symmetric. Furthermore, the flow $\phi$ is transversal to  $\G$.
\end{lem}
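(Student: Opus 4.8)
The plan is to reduce the lemma to (a) the structure of the fibers of $\pi$ and (b) the analysis of a single ``flow chart'' around the gluing region, built from Lemma~\ref{lem:convexity}.

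\emph{Fibers of $\pi$.} First I would show $\pi(x,v)=\pi(x,v')$ for all $(x,v)\in\partial M\setminus\partial_\mathrm{g}M$. Combining the reflection invariance \eqref{eq:invariance} of $\varphi$ with the intertwining relation \eqref{eq:flowcompat} gives $\phi(t,\pi(x,v))=\phi(t,\pi(x,v'))$ for all sufficiently small $t\neq 0$; letting $t\to 0$ and using continuity of the flow $\phi$ on $\mathcal D$ yields the claim. In particular $\pi^{-1}(\mathcal A)\cap(\partial M\setminus\partial_\mathrm{g}M)$ is reflection-symmetric for every $\mathcal A\subset\M$, and since the tangential reflection interchanges $\partial_\mathrm{in}M$ and $\partial_\mathrm{out}M$ we get $\pi(\partial_\mathrm{out}M)=\pi(\partial_\mathrm{in}M)=:\G$ and $\pi|_{\partial_\mathrm{out}M}=\pi|_{\partial_\mathrm{in}M}\circ R$. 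Next I would prove $\pi(\mathring M)\cap\G=\emptyset$ and that $\pi|_{\partial_\mathrm{in}M}$, $\pi|_{\partial_\mathrm{out}M}$ are injective: if two points of $M\setminus\partial_\mathrm{g}M$ have the same $\pi$-image, one flows both a short time into $\mathring M$ (possible for boundary points, where $\varphi$ agrees with $\varphi^g$ up to tangential reflection by \eqref{eq:varphivarphig}, and trivially for interior points), applies injectivity of $\pi|_{\mathring M}$ together with the flow property of $\varphi^g$, and lets the flow time tend to $0$; this forces the two points to coincide (up to reflection in the boundary case). Combined with injectivity of $\pi|_{\mathring M}$, this gives $\M=\pi(\mathring M)\sqcup\G$ and that $\pi|_{\partial_\mathrm{in/out}M}\colon\partial_\mathrm{in/out}M\to\G$ is a continuous bijection.

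\emph{A flow chart.} Fix $N=N_\mathrm{in}\sqcup N_\mathrm{out}$ as in Lemma~\ref{lem:convexity}, shrunk so that $\varphi(N\cap\{t\neq0\})\subset\mathring M$, and set $\Psi\defgr\phi\circ(\mathrm{id}_\R\times\pi)|_{N_\mathrm{in}}$. Since $\pi$ is smooth and $N_\mathrm{in}\subset D$ (so the image of $(\mathrm{id}_\R\times\pi)|_{N_\mathrm{in}}$ lies in $\mathcal D$) and $\phi$ is smooth on $\mathcal D$, the map $\Psi$ is smooth on all of $N_\mathrm{in}$, including across $\{t=0\}$ where $\varphi$ itself is discontinuous; by \eqref{eq:flowcompat}, $\Psi(t,x,v)=\pi(\varphi_t(x,v))$, so $\Psi(0,\cdot)=\pi|_{\partial_\mathrm{in}M}$ and $\Psi(\{0\}\times\partial_\mathrm{in}M)=\G$. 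For $t\neq 0$, $\Psi$ equals $\pi|_{\mathring M}$ post-composed with $\varphi^g$, resp.\ with $\varphi^g$ after the lifted tangential reflection (by \eqref{eq:varphivarphig}), hence is a local diffeomorphism there. To propagate this across $\{t=0\}$ I would differentiate the flow relation $\Psi(s+t,\cdot)=\phi_t\circ\Psi(s,\cdot)$: writing $d\Psi_{(t,x,v)}(\partial_t)=\mathbf X(\Psi(t,x,v))$ and $d\Psi_{(t,x,v)}$ on the $\partial_\mathrm{in}M$-directions as $d\phi_t\circ d(\pi|_{\partial_\mathrm{in}M})$, then applying $d\phi_{-t}$ and using $d\phi_{-t}\,\mathbf X(\Psi(t,x,v))=\mathbf X(\pi(x,v))$, one sees that $d\Psi_{(t,x,v)}$ is invertible for some small $t\neq0$ iff $d\Psi_{(0,x,v)}$ is. Hence $\Psi$ is a local diffeomorphism everywhere on $N_\mathrm{in}$; being also injective (by Lemma~\ref{lem:convexity}(iv) on $\{t\neq0\}$ and by the fiber description otherwise), it is a diffeomorphism onto the open set $\mathcal V\defgr\Psi(N_\mathrm{in})\supset\G$. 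In the chart $\Psi^{-1}$ the gluing region is $\{0\}\times\partial_\mathrm{in}M$ and $\mathbf X$ is $\partial_t$, so $\G$ is a codimension-$1$ submanifold of $\M$, $\pi|_{\partial_\mathrm{in}M}=\Psi|_{\{0\}\times\partial_\mathrm{in}M}$ is a diffeomorphism onto $\G$ (hence so is $\pi|_{\partial_\mathrm{out}M}=\pi|_{\partial_\mathrm{in}M}\circ R$), and $\phi$ is transversal to $\G$.

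\emph{Properness, and the main obstacle.} For properness, take a compact $\mathcal C\subset\M$ and a sequence $p_k\in\pi^{-1}(\mathcal C)$; after passing to a subsequence $\pi(p_k)\to q\in\mathcal C$. If $q\in\pi(\mathring M)$, which is open, then $p_k\in\mathring M$ eventually and $p_k=(\pi|_{\mathring M})^{-1}(\pi(p_k))\to(\pi|_{\mathring M})^{-1}(q)$; if $q\in\G\subset\mathcal V$, then $\Psi^{-1}(\pi(p_k))=(t_k,y_k)\to(0,y_\infty)$, and since each $p_k$ lies in the ($\leq2$-element) $\pi$-fiber over $\pi(\varphi_{t_k}(y_k))$, passing to a subsequence on which $t_k$ has constant sign and using continuity of $\varphi^g$ extracts a convergent subsequence of $(p_k)$; thus $\pi^{-1}(\mathcal C)$ is compact. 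The main obstacle is the assertion that $\Psi$ is a local diffeomorphism \emph{at} $t=0$ — equivalently, that $\pi|_{\partial_\mathrm{in}M}$ is an immersion and $\mathbf X$ is nowhere tangent to $\G$: this genuinely uses the flow structure, transporting the elementary invertibility of $d\Psi$ from the region $t\neq0$ (where $\varphi$ is smooth) across its discontinuity locus by means of the flow relation $\Psi(s+t,\cdot)=\phi_t\circ\Psi(s,\cdot)$ and the flow-invariance of $\mathbf X$.
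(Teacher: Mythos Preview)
Your proposal is correct and follows the same strategy as the paper: the fiber description via \eqref{eq:invariance}--\eqref{eq:flowcompat}, the flow-into-the-interior argument for disjointness and injectivity on the boundary components, and a flow chart $\Psi=\pi\circ\varphi|_{N_\mathrm{in}}$ around $\G$ for the submanifold and transversality claims. The only cosmetic difference is in showing $\Psi$ is a local diffeomorphism at $t=0$: you do it infinitesimally (observing that $d\phi_{-t}\circ d\Psi_{(t,\cdot)}$ is independent of $t$ and invoking invertibility at some small $t\neq 0$), whereas the paper writes $\Psi|_{(-\eps/4,\eps/4)\times S}=\phi_{\eps/2}\circ\pi|_{\mathring M}\circ\varphi^g|_{(-3\eps/4,-\eps/4)\times S}$ directly as a composition of diffeomorphisms --- both are the flow relation $\Psi(s+t,\cdot)=\phi_t\circ\Psi(s,\cdot)$ applied once.
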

\begin{proof}From \eqref{eq:flowcompat}, \eqref{eq:localflow2}, and the continuity of $\pi$ and $\phi$ it follows that
\bq
\pi(x,v)=\pi(x,v')\qquad \forall\; (x,v)\in \partial M\setminus \partial_\mathrm{g}M.\label{eq:pirefl}
\eq
Since the tangential reflection $(x,v)\mapsto (x,v')$ interchanges $\partial_\mathrm{in}M$ and $\partial_\mathrm{out}M$, this shows that $\pi(\partial_\mathrm{in}M)=\pi(\partial_\mathrm{out}M)$. From the fact that $M\setminus \partial_\mathrm{g}M=\mathring M\sqcup \partial_\mathrm{in}M\sqcup \partial_\mathrm{out}M$ and the surjectivity of $\pi$ it follows that $\M=\pi(\mathring M)\cup \G$. Suppose that there is a point $p\in \G\cap \pi(\mathring M)$. Then there are points $\tilde p\in \mathring M$ and $\tilde p'\in \partial M\setminus \partial_\mathrm{g} M$ such that $\pi(\tilde p)=\pi(\tilde p')=p$. Since each trajectory of the  billiard flow $\varphi$ intersects $\partial M\setminus \partial_\mathrm{g} M$ only at a discrete set of time parameters, we can find an $\eps>0$ such that $\varphi_\eps(\tilde p),\varphi_\eps(\tilde p')\in\mathring M$. Then \eqref{eq:flowcompat} implies
\[
\pi|_{\mathring M}(\varphi_\eps(\tilde p))=\phi_\eps(p)=\pi|_{\mathring M}(\varphi_\eps(\tilde p')),
\]
and the injectivity of  $\pi|_{\mathring M}$ and $\varphi_\eps$ yields $\tilde p=\tilde p'$, contradicting the fact that $\mathring M$ and $\partial M\setminus \partial_\mathrm{g} M$ intersect non-trivially. This proves that $\G\cap \pi(\mathring M)=\emptyset$  hence $\M=\pi(\mathring M)\sqcup \G$.  

To prove that $\G$ is a smooth submanifold of $\M$, let $N_\mathrm{in}\subset\R\times \partial_\mathrm{in} M$ be an open set as in Lemma \ref{lem:convexity} and put $\mathcal N:=(\mathrm{id}_\R\times \pi)(N_\mathrm{in})\subset \R\times \G$. Then $\{0\}\times \G\subset \mathcal N\subset \mathcal D$  by Lemma \ref{lem:convexity} \emph{i)}.  Let $(x,v)\in \partial_\mathrm{in}M$ and choose a small open set $S\subset \partial_\mathrm{in}M$ containing $(x,v)$ and a small $\eps>0$ such that $(-\eps,\eps)\times S\subset N_\mathrm{in}$. This is possible because $N_\mathrm{in}$ is a neighborhood of $\{0\}\times \partial_\mathrm{in} M$ in $\R\times \partial_\mathrm{in} M$ by Lemma \ref{lem:convexity} \emph{i)}. Consider the set $N^-_\mathrm{in}$ from \eqref{eq:Ndecomp} and recall from \eqref{eq:varphivarphig} that $\varphi|_{N^-_\mathrm{in}}=\varphi^g|_{N^-_\mathrm{in}}$. Now, since $\pi|_{\mathring M}$ is a diffeomorphism and $\varphi^g$ is an open map, we get that $\pi|_{\mathring M}(\varphi^g((-3\eps/4,-\eps/4)\times S))$ is an open subset of $\M$. Consequently, the set
\[
\mathcal U:=\phi_{\eps/2}(\pi|_{\mathring M}(\varphi^g((-3\eps/4,-\eps/4)\times S)))\subset \M
\]
is open in $\M$ because $\phi_{\eps/2}$ is an open map. Moreover, using \eqref{eq:varphivarphig} and Lemma \ref{lem:convexity} \emph{v)}, we see that $\phi_{\eps/2}\circ \pi|_{\mathring M}\circ \varphi^g|_{(-3\eps/4,-\eps/4)\times S}$ is a diffeomorphism from $(-3\eps/4,-\eps/4)\times S$ onto $\mathcal U$. 

  However, thanks to  \eqref{eq:flowcompat},  \eqref{eq:varphivarphig}, and the flow property of $\varphi$ we have
\[
\mathcal U=(\pi\circ\varphi)((-\eps/4,\eps/4)\times S),\qquad \phi_{\eps/2}\circ \pi|_{\mathring M}\circ \varphi^g|_{(-3\eps/4,-\eps/4)\times S}=\pi\circ \varphi|_{(-\eps/4,\eps/4)\times S},
\]
which shows that $\G\subset \mathcal U$ and that $\pi\circ \varphi|_{(-\eps/4,\eps/4)\times S}:(-\eps/4,\eps/4)\times S\to \mathcal U$ is a diffeomorphism mapping $\{0\}\times S$ onto $\mathcal U\cap\G \defgl \mathcal S$. In particular, the identity $\pi|_S=\phi_{\eps/2}\circ\pi|_{\mathring M}\circ\varphi^g_{-\eps/2}|_S$ shows that $\pi|_S:S\to \mathcal S$ is a diffeomorphism.  Applying \eqref{eq:flowcompat} again we find that $\phi|_{(-\eps/4,\eps/4)\times \mathcal S}:(-\eps/4,\eps/4)\times \mathcal S\to \mathcal U$ is a diffeomorphism. This shows that $\phi$ is transversal to $\G$ at each point in $\mathcal S$.

 Since $(x,v)\in\partial_\mathrm{in}M$ was arbitrary and we know that $\pi(\partial_\mathrm{in}M)=\G$,  we have proved that $\G$ is a smooth submanifold of $\M$ of the same dimension as $\partial_\mathrm{in}M$, i.e., of codimension $1$, that $\pi|_{\partial_\mathrm{in}M}:\partial_\mathrm{in}M \to \G$ is a local diffeomorphism, and that $\phi$ is transversal to $\G$. 

To prove that $\pi|_{\partial_\mathrm{in}M}$ is injective it suffices to observe that given $(x_1,v_1),(x_2,v_2)\in \partial_\mathrm{in}M$ we can repeat the above argument with an $S\subset \partial_\mathrm{in}M$ containing both $(x_1,v_1)$ and $(x_2,v_2)$ and some small enough $\eps>0$ depending on $(x_1,v_1)$ and $(x_2,v_2)$; then the same trick of writing $\pi|_S=\phi_{\eps/2}\circ\pi|_{\mathring M}\circ\varphi^g_{-\eps/2}|_S$ shows that the assumption $\pi(x_1,v_1)=\pi(x_2,v_2)$ implies $(x_1,v_1)=(x_2,v_2)$  by injectivity of $\phi_{\eps/2}$, $\pi|_{\mathring M}$, and $\varphi^g_{-\eps/2}$.

The restriction $\pi|_{\partial_\mathrm{out}M}$ is treated analogously, finishing the proof that the maps $\pi|_{\partial_\mathrm{in/out}M}:\partial_\mathrm{in/out}M\to \G$ are diffeomorphisms. 

Finally, that $\pi$ is proper follows from the continuity of $\pi$  and the observation that by the above  the inverse image $\pi^{-1}(p)$ of any point $p\in \mathcal M$ contains at most $2$ points.
\end{proof}
We are now in a position to prove Proposition \ref{prop:uniqueness}:
\begin{proof}[Proof of \Cref{prop:uniqueness}]
Using Lemma \ref{lem:G}, we define $F:\M\to \M'$ by
\[
F(p):=\begin{cases}
(\pi'\circ \pi|_{\mathring M}^{-1})(p),& p\in \pi(\mathring M),\\
(\pi'\circ\pi|_{\partial_\mathrm{in}M}^{-1})(p), & p\in \pi(\partial_\mathrm{in}M)=\G.
\end{cases}
\]
Then $F$ satisfies by construction the relation $F\circ \pi=\pi'$ and $F$ is bijective with inverse
\[
F^{-1}(p')=\begin{cases}
(\pi\circ \pi'|_{\mathring M}^{-1})(p'),& p'\in \pi'(\mathring M),\\
(\pi\circ \pi'|_{\partial_\mathrm{in}M}^{-1})(p'), & p'\in \pi'(\partial_\mathrm{in}M)=\G'.
\end{cases}
\]
From \eqref{eq:flowcompat} and the relations $\mathcal{D} = (\mathrm{id}_\R\times \pi)(D)$, $\mathcal{D}' = (\mathrm{id}_\R\times \pi')(D)$ we get  $(\mathrm{id}_\R\times F)(\mathcal D)=\mathcal D'$ and $F\circ \phi=\phi'\circ (\mathrm{id}_\R\times F)|_{\mathcal D}$. 

It remains to prove that $F$ and $F^{-1}$ are smooth. By Definition \ref{def:smoothmodel} and Lemma \ref{lem:G} the maps $F:\pi(\mathring M)\to\pi'(\mathring M)$ and  $F:\G\to \G'$ are diffeomorphisms, in particular $F$ and $F^{-1}$ are smooth on the open sets $\pi(\mathring M)\subset \M$ and $\pi'(\mathring M)\subset \M'$, respectively. To prove smoothness of $F$ and $F^{-1}$ near $\G$ and $\G'$, we note that since $\mathcal D$ and $\mathcal D'$ are open and $\phi$, $\phi'$ are flows transversal to $\G$ and $\G'$, respectively by Lemma \ref{lem:G}, the inverse function theorem implies that there is an open set $U\subset \M$ containing $\G$, an open set $V\subset (\R\times \G)\cap \mathcal{D}$ containing $\{0\}\times \G$, an open set $U'\subset \M'$ containing $\G'$, and an open set $V'\subset (\R\times \G')\cap \mathcal{D}'$ containing $\{0\}\times \G'$ such that
\[
\phi: V\to U,\qquad \phi':V'\to U'
\]
are diffeomorphisms. In fact, since $F\circ \phi=\phi'\circ (\mathrm{id}_\R\times F)|_{\mathcal D}$, we can achieve
\[
V'=(\mathrm{id}_\R\times F|_\G)(V)
\]
by shrinking $V$ or $V'$. To show that $F$ is smooth on $U$ and $F^{-1}$ is smooth on $U'$ it now suffices to prove that
\[
(\mathrm{id}_\R\times F|_\G^{-1})\circ\phi'|_{V'}^{-1}\circ F\circ \phi: V\to V,\qquad (\mathrm{id}_\R\times F)\circ \phi|_{V'}^{-1}\circ F^{-1}\circ \phi': V'\to V'
\]
are smooth maps. However, the latter are nothing but the identity maps as one sees by employing again the relation $F\circ \phi=\phi'\circ (\mathrm{id}_\R\times F)|_{\mathcal D}$.

Finally, the uniqueness of $F$ follows from the fact that by Lemma \ref{lem:G} the complement $\M\setminus \G$ is dense in $\M$ and $F$ is uniquely determined by $\pi$ and $\pi'$ on $\M\setminus \G$.
\end{proof}
\subsection{Proof of Proposition \ref{prop:gdependence}}\label{sec:proof4}
Consider the gluing regions  $\mathcal G_g=\pi_g(\partial M\setminus \partial_\mathrm{g}M)\subset \M_g$ and $\mathcal G_{g'}=\pi_{g'}(\partial M\setminus \partial_\mathrm{g}M)\subset \M_{g'}$, respectively. Then by the fact that $\pi_g|_{\mathring M}$ and $\pi_{g'}|_{\mathring M}$ are diffeomorphisms onto $\M_g \setminus \mathcal G_g$ and  $\M_{g'} \setminus \mathcal G_{g'}$, respectively, we immediately get the built-in diffeomorphism
\bq
\pi_{g'}|_{\mathring M}\circ\pi_g|_{\mathring M}^{-1}: \M_g \setminus \mathcal G_g\stackrel{\cong}{\longrightarrow}  \M_{g'} \setminus \mathcal G_{g'}\label{eq:diffeoint}
\eq
that makes the analogue of the diagram \eqref{eq:diffpi}, with $M\setminus \partial_\mathrm{g}M$ replaced by $\mathring M$, commute. Moreover, we see that any diffeomorphism  $\M_g\cong \M_{g'}$ making \eqref{eq:diffpi} commute coincides with \eqref{eq:diffeoint} on the dense set $\M_g \setminus \mathcal G_g$, so that it is uniquely determined by this property. 

In order to extend  \eqref{eq:diffeoint} to a (necessarily unique) diffeomorphism $\M_g\cong \M_{g'}$, let $N_g,N_{g'}\subset \R\times (\partial M\setminus \partial_\mathrm{g}M)$ be two sets as in \Cref{lem:convexity}, applied separately for $g$ and $g'$, respectively, and consider the intersection $N_\mathrm{in}:=(N_g)_\mathrm{in}\cap (N_{g'})_\mathrm{in}=N\cap (\R\times \partial_\mathrm{in}M)$, where $N:=N_g\cap N_{g'}$. Then the set $O:=\varphi_g(N_\mathrm{in})\cap \varphi_{g'}(N_\mathrm{in})$ is an open neighborhood of $\partial M\setminus \partial_\mathrm{g}M$ in $M\setminus \partial_\mathrm{g}M$ by  \Cref{lem:convexity}.  In particular, $O\cap(\partial M\setminus \partial_\mathrm{g}M)=\partial M\setminus \partial_\mathrm{g}M$ is invariant under tangential reflection with respect to $g$ and $g'$. Thus,  by \Cref{lem:D} (also applied separately for $g$ and $g'$), the sets
\[
N_{g,g'}:=\varphi_g^{-1}(O)\cap N_\mathrm{in},\qquad N_{g',g}:=\varphi_{g'}^{-1}(O)\cap N_\mathrm{in}
\]
are open in $\R\times \partial_\mathrm{in}M$, and by definition of the quotient topologies on $\M_g$ and $\M_{g'}$ the identity map $O\to O$ descends to a homeomorphism
\bq
\mathcal O_g\cong \mathcal O_{g'} \label{eq:diffG3}
\eq	
 between the open neighborhood $\mathcal O_g:=\pi_g(O)$ of $\mathcal G_g$ in $\M_{g}$ and the open neighborhood $\mathcal O_{g'}:=\pi_{g'}(O)$ of $\mathcal G_{g'}$ in $\M_{g'}$. Moreover, by definition of the quotient maps $\pi_g$ and $\pi_{g'}$, the diffeomorphism \eqref{eq:diffeoint} and the homeomorphism \eqref{eq:diffG3} agree on   $\mathcal O_g\cap (\M_g \setminus \mathcal G_g)$, so that they glue to a global homeomorphism $\M_g\cong \M_{g'}$. It remains to prove that the latter is a diffeomorphism, which reduces to proving that the map \eqref{eq:diffG3} and its inverse are smooth. By definition of the smooth structures on $\M$ and  $\M_{g'}$, this reduces to checking that 
 \bq\begin{split}
 \varphi_g|_{N_{g,g'}}^{-1}\circ\varphi_{g'}|_{N_{g',g}}:N_{g',g}&\to N_{g,g'}, \\
 \varphi_{g'}|_{N_{g',g}}^{-1}\circ \varphi_g|_{N_{g,g'}}: N_{g,g'}&\to N_{g',g}\label{eq:nggN}\end{split}
 \eq
are smooth maps. By \eqref{eq:varphivarphig}  the maps \eqref{eq:nggN} can be expressed in terms of the geodesic flows $\varphi^g$, $\varphi^{g'}$ and the reflection maps $R_{g},R_{g'}: \R\times (\partial M\setminus \partial_\mathrm{g}M)\to \R\times (\partial M\setminus \partial_\mathrm{g}M)$, $(t,x,v)\mapsto (t,x,R_{g(x)}v)$, $(t,x,v)\mapsto (t,x,R_{g'(x)}v)$, by\\
\[
(\varphi_g|_{N_{g,g'}}^{-1}\circ\varphi_{g'}|_{N_{g',g}})(t,x,v)=\begin{cases}(\varphi^g|_{N_{g,g'}}^{-1}\circ\varphi^{g'}|_{N_{g',g}})(t,x,v), & t\leq 0,\\
(R_{g}\circ\varphi^g|_{R_g(N_{g,g'})}^{-1}\circ\varphi^{g'}|_{R_{g'}(N_{g',g})}\circ R_{g'})(t,x,v), & t>0,\end{cases}
\]
\[
(\varphi_{g'}|_{N_{g',g}}^{-1}\circ \varphi_g|_{N_{g,g'}})(t,x,v)=\begin{cases}(\varphi^{g'}|_{N_{g',g}}^{-1}\circ \varphi^g|_{N_{g,g'}})(t,x,v), & t\leq 0,\\
(R_{g'}\circ\varphi^{g'}|_{R_{g'}(N_{g',g})}^{-1}\circ \varphi^g|_{R_{g}(N_{g,g'})}\circ R_{g})(t,x,v), & t>0.\end{cases}
\]
Since $\varphi^g|_{N_{g,g'}}$ and $\varphi^{g'}|_{N_{g',g}}$ are diffeomorphisms onto their images, the above maps are smooth iff their post-compositions with $\varphi^g|_{N_{g,g'}}$ and $\varphi^{g'}|_{N_{g',g}}$ are smooth, respectively. In view of the definitions of $R_{g}$ and $\tilde R_{g}$, the proof is finished. \qed

\section{Construction of smooth model bundles} \label{app:bundles}

Here we provide a concrete construction of smooth models in the vector-valued setting, i.e., of smooth model bundles. The reader who is not interested in the vector-valued case may safely skip this appendix. The construction follows ideas very similar to those employed in the above construction of smooth models for non-grazing billiard flows. 

We begin by reminding the reader of some notation used in the main text: There we introduced the non-grazing billiard flow $\varphi$ acting on the phase-space $M\setminus \partial_\mathrm{g} M$, defined on the domain $D\subset \R\times(M\setminus \partial_\mathrm{g} M)$ from \eqref{eq:D} which is open by Lemma \ref{lem:D}. For the analytic treatment of this dynamical system we constructed a model manifold $\mathcal{M}$ together with a smooth surjection $\pi: M\setminus \partial_\mathrm{g} M \rightarrow \mathcal{M}$ and a  smooth model flow $\phi$ on $\mathcal{M}$, defined on the domain $\mathcal D\subset \R\times\mathcal M$, such that $\pi\circ \varphi_t = \phi_t\circ \pi$. This was necessary because $\varphi$ is non-smooth (in fact non-continuous and not even a flow in the proper sense, recall Remark \ref{rem:notflow}) due to the presence of the instantaneous boundary reflections $R: \partial M\setminus \partial_\mathrm{g} M \rightarrow \partial M\setminus \partial_\mathrm{g} M,\, (x, v) \mapsto (x, v')$.

For the remainder of this appendix we now assume a smooth $\mathbb{C}$-vector bundle $$\pi_{\widetilde{\mathcal E}}: \widetilde{\mathcal{E}} \rightarrow M\setminus \partial_\mathrm{g} M$$ of rank $r$ to be given, the fibers of which we denote by $\widetilde{\mathcal E}_{(x, v)} \defgr \pi_{\widetilde{\mathcal E}}^{-1}(\{(x, v)\})$.

Furthermore we require a first-order differential operator $\widetilde{\mathbb{X}}$ acting on smooth sections of $\widetilde{\mathcal{E}}$ and satisfying the following Leibniz rule:
\begin{equation} \label{eq:leibniz_app}
\widetilde{\mathbb{X}}\big( \widetilde{f}\cdot \widetilde{\sigma} \big) = (\mathbf{P}\widetilde{f})\cdot \widetilde{\sigma} + \widetilde{f}\cdot \widetilde{\mathbb{X}}\widetilde{\sigma},\quad \forall \widetilde{f}\in \mathrm{C}^\infty_\mathrm{Bill}(M\setminus \partial_\mathrm{g}M),\, \widetilde{\sigma}\in \mathrm{C}^\infty(M\setminus\partial_\mathrm{g} M, \widetilde{\mathcal{E}}),
\end{equation}
where $\mathbf{P}$ denotes the billiard generator defined in Section \ref{sec:CBillP}. An additional piece of data necessary for the construction of a smooth model for $\widetilde{\mathcal{E}}$ is  a bundle isomorphism
\begin{equation*}
\kappa: \widetilde{\mathcal{E}}\big|_{\partial_\mathrm{in} M} \longrightarrow \widetilde{\mathcal{E}}\big|_{\partial_\mathrm{out} M}
\end{equation*}
such that $\pi_{\widetilde{\mathcal E}}\circ \kappa = R\circ \pi_{\widetilde{\mathcal E}}\big|_{\pi_{\widetilde{\mathcal E}}^{-1}(\partial_\mathrm{in} M)}$ holds. For example, such an isomorphism exists if both  $\widetilde{\mathcal{E}}\big|_{\partial_\mathrm{in} M}$ and  $\widetilde{\mathcal{E}}\big|_{\partial_\mathrm{out} M}$ can be trivialized: Then we can simply define $\kappa$ as the composition of the first trivialization, the map $R\times \mathrm{id}_{\C^r}$, and the inverse of the second trivialization.

Before proving our main theorem we first have to describe a dynamical quantity associated with the above data, namely the \emph{billiard parallel transport}. Morally it is derived from the operator $\widetilde{\mathbb{X}}$ in the same intuitive manner as the billiard flow is derived from the geodesic flow:
\begin{lem}\label{lem:billparalltransp}
There exists a unique map $\widetilde{\alpha}: \widetilde{\mathcal{D}} \rightarrow \widetilde{\mathcal{E}}$ on the flow domain
\begin{equation*}
\widetilde{\mathcal{D}} \defgr \big\{ (t, e) \,\big|\, \exists (x, v)\in M\setminus\partial_\mathrm{g} M:\, (t, x, v)\in D ~\text{and}~ e\in \widetilde{\mathcal{E}}_{(x, v)} \big\} ~,
\end{equation*}
called the \emph{billiard parallel transport}, with the following properties:

\begin{enumerate}
\item For each $(t, x, v)\in D$ the map
\begin{equation} 
\widetilde{\alpha}_{(x, v), t}: \widetilde{\mathcal{E}}_{(x, v)} \longrightarrow \widetilde{\mathcal{E}}_{\varphi_t(x, v)},\qquad e\mapsto \widetilde{\alpha}_{(x, v), t}(e) \defgr \widetilde{\alpha}(t, e) ,
\end{equation}
is a well-defined linear isomorphism.
\item  $\widetilde{\alpha}$ is a flow up to composition with $\kappa$ on boundary fibers. More precisely, one has
\bq
\widetilde{\alpha}_{(x, v), 0}=\mathrm{id}_{\widetilde{\mathcal{E}}_{(x, v)}}\qquad \forall\;(x,v)\in M\setminus\partial_\mathrm{g}M\label{eq:alpha0tilde}
\eq
and if  $(t,x, v)\in D$ and $t'\in \R$ are such that $(t',\varphi_t(x, v))\in D$, then the following generalization of \eqref{eq:flowprop} holds:
\begin{equation} \label{eq:flowpropalpha}
\widetilde{\alpha}_{\varphi_t(x, v), t'}\circ\widetilde{\alpha}_{(x, v), t}=\begin{dcases}\widetilde{\alpha}_{(x, v), t+t'},& t + t' \neq 0\text { or }\varphi_{t+t'}(x,v)\in \mathring M\text{ or } \\
&t<0,(x, v) \in\partial_\mathrm{in} M\text { or } t>0,(x, v) \in\partial_\mathrm{out} M,\\
\kappa|_{\widetilde{\mathcal{E}}_{(x, v)}}, & t+t'=0\text { and }t>0,(x, v) \in\partial_\mathrm{in} M,\\
\kappa^{-1}|_{\widetilde{\mathcal{E}}_{(x, v)}},& t+t'=0\text { and } t<0,(x, v) \in\partial_\mathrm{out} M.
\end{dcases}
\end{equation}
\item For each $\widetilde{\sigma}\in \mathrm{C}^\infty(M\setminus\partial_\mathrm{g} M, \widetilde{\mathcal{E}})$ the map $$\varphi^{-1}(\mathring M)\to  \widetilde{\mathcal{E}},\qquad (t,x,v)\mapsto \widetilde{\alpha}_{(x, v), t}\big(\widetilde{\sigma}(x,v)\big),$$ is smooth.
\item For each $(x,v)\in \mathring{M}$ and $\widetilde{\sigma}\in \mathrm{C}^\infty(M\setminus\partial_\mathrm{g} M, \widetilde{\mathcal{E}})$  one has
\[
(\widetilde{\mathbb{X}}\widetilde{\sigma})(x,v)=\frac{d}{dt}\Big|_{t=0}\widetilde{\alpha}_{\varphi_t(x, v), -t}\widetilde{\sigma}(\varphi_t(x, v)).
\]
\end{enumerate}
\end{lem}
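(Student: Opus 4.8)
The plan is to construct $\widetilde\alpha$ by the same "piece together from smooth pieces and reflect at the boundary" recipe used to build $\varphi$ itself, replacing the geodesic flow by the parallel transport associated with $\widetilde{\mathbb{X}}$ over regions where $\widetilde{\mathbb{X}}$ acts on genuine smooth sections. First I would recall that on $\varphi^{-1}(\mathring M)$ the operator $\widetilde{\mathbb{X}}$ is an honest lift of the geodesic vector field $X^g$ (by \eqref{eq:leibniz_app} together with \eqref{eq:restrictionP}), so the theory of linear ODEs along the integral curves of $X^g$ produces a smooth parallel transport $\widetilde\alpha^g_{(x,v),t}:\widetilde{\mathcal E}_{(x,v)}\to\widetilde{\mathcal E}_{\varphi^g_t(x,v)}$ defined for $(t,x,v)$ in the open set $(\varphi^g)^{-1}(\mathring M)$, characterized uniquely as the solution of the transport equation $\widetilde{\mathbb{X}}(\widetilde\sigma)(\varphi^g_t(x,v))=\frac{d}{dt}\widetilde\alpha^g_{\varphi^g_t(x,v),-t}\widetilde\sigma(\varphi^g_t(x,v))$ with initial condition the identity. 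Then, exactly mirroring the definitions \eqref{eq:localflow1}--\eqref{eq:defflow}, I would set $\widetilde\alpha_{(x,v),t}\defgr\widetilde\alpha^g_{(x,v),t}$ for $(t,x,v)\in N^-_\mathrm{in}\cup N^+_\mathrm{out}$, set $\widetilde\alpha_{(x,v),t}\defgr\widetilde\alpha^g_{(x,v'),t}\circ\kappa|_{\widetilde{\mathcal E}_{(x,v)}}$ on $(N_\mathrm{in}\setminus N^-_\mathrm{in})\cup(N_\mathrm{out}\setminus N^+_\mathrm{out})$ (compare \eqref{eq:varphivarphig}), and on all of $M\setminus\partial_\mathrm{g}M$ extend it to arbitrary flow times by the recursive concatenation procedure of \eqref{eq:defflow}, using the extended $\widetilde\alpha^g$ along interior segments and composing with $\kappa$ or $\kappa^{-1}$ at each boundary crossing.

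The verification of the four claimed properties would then proceed as follows. Property (1), that each $\widetilde\alpha_{(x,v),t}$ is a linear isomorphism onto the correct fiber, is immediate from the fact that each building block (interior parallel transport, $\kappa$, $\kappa^{-1}$) is a linear isomorphism between the stated fibers and the target fiber matches $\varphi_t(x,v)$ by construction; I would check consistency of the concatenation (independence of the chosen decomposition $t=\sum t_j$) by appealing to the cocycle property of $\widetilde\alpha^g$ plus the identity $\kappa^{-1}\circ\kappa=\mathrm{id}$, which is the parallel-transport analogue of the flow-property argument following \eqref{eq:defflow}. Property (2) is then essentially a bookkeeping statement: \eqref{eq:alpha0tilde} holds since $\widetilde\alpha^g_{\cdot,0}=\mathrm{id}$ and no boundary reflection occurs at $t=0$, and \eqref{eq:flowpropalpha} is obtained by a case distinction identical to \eqref{eq:flowprop}, the only new feature being that crossing the boundary in the "$t+t'=0$" degenerate cases contributes exactly one factor $\kappa$ (if the crossing goes $\partial_\mathrm{in}\to\partial_\mathrm{out}$) or $\kappa^{-1}$ (if $\partial_\mathrm{out}\to\partial_\mathrm{in}$). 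Property (3) follows from smoothness of solutions of linear ODEs in initial data and parameters, since on $\varphi^{-1}(\mathring M)=(\varphi^g)^{-1}(\mathring M)$ we have $\widetilde\alpha=\widetilde\alpha^g$ which is smooth there. Property (4) is the defining transport equation restricted to $t=0$, i.e.\ it holds by the very construction of $\widetilde\alpha^g$ on $\mathring M$.

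For \textbf{uniqueness}, suppose $\widetilde\alpha$ and $\widetilde\alpha'$ both satisfy (1)--(4). On $\varphi^{-1}(\mathring M)$, property (4) together with the uniqueness of solutions to the linear transport ODE forces $\widetilde\alpha=\widetilde\alpha'$ on a neighborhood of $t=0$, and then the flow property (2) (in its non-degenerate case $\varphi_{t+t'}(x,v)\in\mathring M$) propagates the equality to all $(t,x,v)\in\varphi^{-1}(\mathring M)$; since $\mathring M$ is dense in $M\setminus\partial_\mathrm{g}M$ and the maps $(x,v)\mapsto\widetilde\alpha_{(x,v),t}(e)$, $(x,v)\mapsto\widetilde\alpha'_{(x,v),t}(e)$ agree on the dense set $\mathring M$ and are continuous (by property (3) on interior segments and the composition structure near the boundary), they agree on all of $M\setminus\partial_\mathrm{g}M$ — and the boundary values $\kappa,\kappa^{-1}$ are pinned down by (2). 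The main obstacle I anticipate is not any single step but the careful handling of the degenerate boundary cases in \eqref{eq:flowpropalpha}: one must check that the factor $\kappa$ versus $\kappa^{-1}$ comes out consistently in every sub-case of the concatenation, and in particular that the definition does not secretly depend on whether one approaches a boundary crossing from the inward or outward side. This is exactly the place where \eqref{eq:invariance} and the reflection-symmetry arguments of Lemmas \ref{lem:convexity} and \ref{lem:D} will have to be invoked, now carrying the bundle isomorphism $\kappa$ along for the ride.
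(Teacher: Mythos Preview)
Your approach is essentially the same as the paper's: both build the billiard parallel transport by taking the smooth parallel transport induced by $\widetilde{\mathbb{X}}$ along interior geodesic segments and concatenating with $\kappa^{\pm 1}$ at each reflection, exactly mirroring the construction of $\varphi$ itself, and both deduce uniqueness from the fact that property~(4) pins down $\widetilde\alpha$ on $\varphi^{-1}(\mathring M)$ while~(2) determines the boundary values via $\kappa$. The only cosmetic difference is that the paper first embeds a neighborhood of $M\setminus\partial_\mathrm{g}M$ into a closed manifold and extends $\widetilde{\mathbb{X}}$ so that the transfer operator $\exp(-t\widetilde{\mathbb{X}})$ is globally defined, whereas you work directly with the ODE on $(\varphi^g)^{-1}(\mathring M)$; note that you will still need some extension (or a continuity argument) to make $\widetilde\alpha^g_{(x,v),t}$ well-defined when the endpoint $\varphi^g_t(x,v)$ lands on $\partial M\setminus\partial_\mathrm{g}M$, and your formula on $N_\mathrm{out}\setminus N^+_\mathrm{out}$ should read $\kappa^{-1}$ rather than $\kappa$ since $\kappa$ is only defined on fibers over $\partial_\mathrm{in}M$.
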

\begin{proof}
As a preliminary step we embed a neighborhood of $M\setminus\partial_\mathrm{g}M$ in $S\Sigma$ into a closed manifold $N$ and extend $\widetilde{\mathcal{E}}$ and $\widetilde{\mathbb{X}}$ arbitrarily to $N$ such that near $M\setminus\partial_\mathrm{g}M$ they satisfy the Leibniz rule \eqref{eq:leibniz_app} with $\mathbf{P}$ replaced by the geodesic vector field $X^g$ (recall from \eqref{eq:restrictionP} that $\mathbf{P}$ agrees with $X^g$ on billiard functions). We continue to denote these extensions by $\widetilde{\mathcal E}, \widetilde{\mathbb{X}}$ and obtain a well-defined transfer operator $\exp(-t \widetilde{\mathbb{X}})$ acting on smooth sections of $\widetilde{\mathcal E}$. 

Given $(t,x, v)\in D$ we begin by assuming that $t \geq 0$ is small enough such that $\varphi_s(x, v)$, $s\in [0, t]$, intersects $\partial M\setminus \partial_\mathrm{g} M$ only at its endpoint $\varphi_t(x, v)$, if at all. Then two cases must be distinguished:
\begin{enumerate}
	\item $(x, v)\in \mathring{M}$: Given $e\in \widetilde{\mathcal{E}}_{(x, v)}$ choose $\widetilde{\sigma}\in \mathrm{C}^\infty(N,\widetilde{\mathcal E})$ with $\widetilde{\sigma}(x, v) = e$ and supported in $\mathring{M}$. Then we define
	\begin{equation*}
	\widetilde{\alpha}_{(x, v), t}(e) \defgr \big(\hspace*{-0.15em}\exp(-t \widetilde{\mathbb{X}}) \widetilde{\sigma} \big)(\varphi_t(x, v)) ,
	\end{equation*}
	which is independent of the choice of $\widetilde{\sigma}$ by the Leibniz rule \eqref{eq:leibniz_app}, which in turn applies by the support property of $\widetilde{\sigma}$ and where we use that any smooth function supported in $\mathring{M}$ is a billiard function.
	\item $(x, v)\in \partial M\setminus \partial_\mathrm{g}M$: If $(x, v)\in \partial_\mathrm{out} M$, given $e\in \widetilde{\mathcal{E}}_{(x, v)}$,  choose $\widetilde{\sigma}\in \mathrm{C}^\infty(N,\widetilde{\mathcal E})$ with $\widetilde{\sigma}(x, v) = e$ by multiplying some local frame with cutoffs that restrict to billiard functions on $M\setminus \partial_\mathrm{g}M$. Again we define
	\begin{equation*}
	\widetilde{\alpha}_{(x, v), t}(e) \defgr \big(\hspace*{-0.15em}\exp(-t \widetilde{\mathbb{X}}) \widetilde{\sigma} \big)(\varphi_t(x, v)) ,
	\end{equation*}
	independently of the choice of $\widetilde{\sigma}$.
	If instead $(x, v)\in \partial_\mathrm{in} M$ we proceed in the same way but with $\kappa(e)$ instead of $e$.
\end{enumerate}
This construction can analogously be transferred to sufficiently small $t < 0$. 

Now, without these smallness assumptions on $t$, we use that by definition of $D$ and $\varphi$ there exists a unique finite sequence $t_0, t_1, ..., t_N$ with $t=t_0+\cdots+t_N$ such that $\varphi_t(x, v)$ can be written in terms of the geodesic flow $\varphi^g$ as $\varphi_t(x, v) = \varphi_{t_N}^g\circ R\circ \cdots\circ R\circ \varphi_{t_0}^g(x, v)$. We then define the billiard parallel transport of $e\in \widetilde{\mathcal{E}}_{(x, v)}$ as
\begin{equation*}
\widetilde{\alpha}_{(x, v), t}(e) \defgr \widetilde{\alpha}_{(x, v), t_N}\circ \cdots \circ \widetilde{\alpha}_{(x, v), t_0}(e) .
\end{equation*}
The claimed Properties (1), (2) of $\widetilde{\alpha}$ are now satisfied by construction and Properties (3), (4) follow from the properties of $\exp(-t \widetilde{\mathbb{X}})$ since $\varphi$ is smooth on $\varphi^{-1}(\mathring M)$. Finally, Property (1), Eq.\ \eqref{eq:alpha0tilde}, and Property (4) determine $\widetilde{\alpha}$ uniquely on $\varphi^{-1}(\mathring M)\times \widetilde{\mathcal{E}}$ and \eqref{eq:flowpropalpha} then implies that the full  map $\widetilde{\alpha}$ is unique because its values at points outside $\varphi^{-1}(\mathring M)\times \widetilde{\mathcal{E}}$ are determined by $\kappa$ and values at points inside $\varphi^{-1}(\mathring M)\times \widetilde{\mathcal{E}}$.
\end{proof}
\begin{defn}We call the operator
\begin{align*}
\mathcal{L}:D\times \mathrm{C}^\infty(M\setminus\partial_\mathrm{g} M, \widetilde{\mathcal{E}})&\longrightarrow \Gamma(M\setminus\partial_\mathrm{g} M, \widetilde{\mathcal{E}})\\
((t, x, v),\widetilde{\sigma})&\longmapsto \widetilde{\alpha}_{\varphi_{-t}(x, v), t}(\widetilde{\sigma}(\varphi_{-t}(x, v))) \defgl (\mathcal{L}_t\widetilde{\sigma})(x,v)
\end{align*}
the \emph{billiard transfer operator} associated with $\widetilde{\mathbb{X}}$. Here $\Gamma(M\setminus\partial_\mathrm{g} M, \widetilde{\mathcal{E}})$ denotes the set of arbitrary  sections of $\widetilde{\mathcal{E}}$ without any continuity or smoothness assumption.
\end{defn}
This terminology is of course motivated by Lemma \ref{lem:billparalltransp}, which implies that for each  $\widetilde{\sigma}\in\mathrm{C}^\infty(M\setminus\partial_\mathrm{g} M, \widetilde{\mathcal{E}})$ the map  
$$\varphi^{-1}(\mathring M)\cap (\R\times \mathring M) \to \widetilde{\mathcal{E}}|_{\mathring M},\qquad (t,x,v)\mapsto (\mathcal{L}_t\widetilde{\sigma})(x,v),$$
is smooth and one has
\bq
(\widetilde{\mathbb{X}}\widetilde{\sigma})(x,v)=-\frac{d}{dt}\Big|_{t=0}(\mathcal{L}_t\widetilde{\sigma})(x,v)\quad \forall\; (x,v)\in \mathring{M}.\label{eq:XLprop}
\eq
We can now introduce the vector-valued equivalent of the billiard functions:
\begin{defn}The set of \emph{smooth billiard sections} of $\widetilde{\mathcal E}$ is
\[
\mathrm{C}^\infty_\mathrm{Bill}(M\setminus\partial_\mathrm{g} M, \widetilde{\mathcal E}) \defgr \left\{ \widetilde{\sigma}\in \mathrm{C}^\infty(M\setminus \partial_\mathrm{g} M) \,\big|\, \big( (t,x,v)\mapsto(\mathcal{L}_t\widetilde{\sigma})(x,v)\big)\in \mathrm{C}^\infty(D, \widetilde{\mathcal E}) \right\}.
\]
\end{defn}
An elementary property of the smooth billiard sections is that they are stable with respect to multiplication by smooth billiard functions -- in other words, $\mathrm{C}^\infty_\mathrm{Bill}(M\setminus\partial_\mathrm{g} M, \widetilde{\mathcal E})$ is a $\mathrm{C}^\infty_\mathrm{Bill}(M\setminus\partial_\mathrm{g} M)$-module.  The main siginificance of the smooth billiard sections is that they are preserved by the operator $\widetilde{\mathbb{X}}$ and the latter acts on them by differentiation of the billiard transfer operator:
\[
\widetilde{\mathbb{X}}: \mathrm{C}^\infty_\mathrm{Bill}(M\setminus\partial_\mathrm{g} M, \widetilde{\mathcal E})\longrightarrow\mathrm{C}^\infty_\mathrm{Bill}(M\setminus\partial_\mathrm{g} M, \widetilde{\mathcal E}),\qquad (\widetilde{\mathbb{X}}\widetilde{\sigma})(x,v)=-\frac{d}{dt}\Big|_{t=0}(\mathcal{L}_t\widetilde{\sigma})(x,v).
\]
This follows from \eqref{eq:XLprop}. It provides the vector-valued generalization of the formula \eqref{eq:P}. 

With this data as our point of departure we can now prove our main theorem in the vector-valued situation:
\begin{theorem}[Existence of smooth model bundles] \label{thm:model_bundle}
There exists a smooth vector bundle $\pi_\mathcal{E}: \mathcal{E} \rightarrow \mathcal{M}$ and a smooth surjection $\Pi:\widetilde{\mathcal E} \to \mathcal{E}$ such that the diagram
\bq
\begin{tikzcd}
\widetilde{\mathcal E} \arrow{r}{\Pi}\arrow{d}{\pi_{\widetilde{\mathcal E}}}& \mathcal{E} \arrow{d}{\pi_{\mathcal E}}\\%
M\setminus \partial_\mathrm{g}M  \arrow{r}{\pi} & \mathcal{M}
\end{tikzcd}\label{eq:piPi}
\eq
commutes, as well as a linear isomorphism
\begin{equation} \label{eq:def_billiard_sections}
\Sigma_\mathcal{E}: \mathrm{C}^\infty_\mathrm{Bill}(M\setminus\partial_\mathrm{g} M, \widetilde{\mathcal E}) \longrightarrow \mathrm{C}^\infty(\mathcal{M}, \mathcal{E}) 
\end{equation}
that is uniquely characterized by the relation
\[
\Sigma_\mathcal{E}(\widetilde{\sigma})\circ \pi=\Pi\circ \widetilde{\sigma}\qquad \forall\; \widetilde{\sigma}\in \mathrm{C}^\infty_\mathrm{Bill}(M\setminus\partial_\mathrm{g} M, \widetilde{\mathcal E}).
\]
Furthermore, introducing the first order differential operator 
$$
\mathbb{X}:=\Sigma_\mathcal{E} \circ \widetilde{\mathbb{X}} \circ \Sigma_\mathcal{E}^{-1}:\mathrm{C}^\infty(\mathcal{M}, \mathcal{E})\to \mathrm{C}^\infty(\mathcal{M}, \mathcal{E})
$$
and for  $(t,p)\in \mathcal D$ the parallel transport 
\begin{equation} \label{eq:def_parallel_transport}
\begin{split}
\alpha_{p, t}: \mathcal{E}_p &\longrightarrow \mathcal{E}_{\phi_t(p)},\\
e &\longmapsto \big(\hspace*{-0.15em}\exp(-t \mathbb{X}) \sigma_e\big)(\phi_t(p)),
\end{split}
\end{equation}
where $\exp(-t \mathbb{X})$ is the transfer operator of $\mathbb X$ and $\sigma_e$ denotes any smooth section with $\sigma_e(p) = e$, then the trace of $\alpha_{p, t}$ on a periodic trajectory (i.e., when $\phi_t(p)=p$) coincides with the trace of the billiard parallel transport from Lemma \ref{lem:billparalltransp} on the corresponding periodic trajectory of the non-grazing billiard flow $\varphi$. 
\end{theorem}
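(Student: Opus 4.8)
The plan is to construct $\mathcal{E}$ by the same gluing procedure used for $\mathcal{M}$ itself, with the boundary identification now implemented through the isomorphism $\kappa$. Concretely, I would define $\mathcal{E}\defgr \widetilde{\mathcal E}/\!\sim$, where $e\sim e'$ for $e\in \widetilde{\mathcal E}_{(x,v)}$, $e'\in \widetilde{\mathcal E}_{(x,v')}$ with $(x,v)\in\partial_\mathrm{in}M$ iff $e'=\kappa(e)$, and $e\sim e$ on $\mathring M$. Let $\Pi:\widetilde{\mathcal E}\to\mathcal E$ be the quotient map and let $\pi_{\mathcal E}$ be induced by $\pi_{\widetilde{\mathcal E}}$, so that \eqref{eq:piPi} commutes by construction. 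As in Section~\ref{construction} for $\mathcal M$, the smooth structure on the total space of $\mathcal E$ is not automatic and must be put in using flow coordinates: over $\pi(\mathring M)$ one transports the bundle structure of $\widetilde{\mathcal E}|_{\mathring M}$ via $\pi|_{\mathring M}$, and near $\mathcal G$ one uses a local trivialization of $\widetilde{\mathcal E}$ over $N_\mathrm{in}$ together with the flow chart $\Phi$ from Corollary~\ref{cor:U}, transporting by the billiard parallel transport $\widetilde\alpha$ of Lemma~\ref{lem:billparalltransp}. The key compatibility computation, analogous to \eqref{eq_flow_in_coord} and \eqref{eq_contact3}, is that in these charts the transition is smooth precisely because $\widetilde\alpha$ is a flow up to $\kappa$ on boundary fibers — i.e.\ \eqref{eq:flowpropalpha} — and because $\widetilde{\mathbb{X}}$ acting on billiard sections is smooth along $\varphi^{-1}(\mathring M)$ by Property~(3) of Lemma~\ref{lem:billparalltransp}. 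The isomorphism $\Sigma_{\mathcal E}$ and its characterizing relation $\Sigma_{\mathcal E}(\widetilde\sigma)\circ\pi=\Pi\circ\widetilde\sigma$ then follow exactly as the scalar statement $\pi^*(\mathrm C^\infty(\mathcal O))=\mathrm C^\infty_{\mathrm{Bill}}(O)$ in Proposition~\ref{prop:pullbacks}: a billiard section descends to a section of $\mathcal E$ because its $\mathcal{L}_t$-smoothness is exactly the smoothness of its descent in flow charts, and conversely every smooth section of $\mathcal E$ pulls back to a billiard section.

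Granting this setup, the definition \eqref{eq:def_parallel_transport} makes $\alpha_{p,t}$ an honest parallel transport for the smooth connection-like operator $\mathbb{X}$ along the smooth flow $\phi$, and the relation $\mathbb{X}=\Sigma_{\mathcal E}\circ\widetilde{\mathbb{X}}\circ\Sigma_{\mathcal E}^{-1}$ together with $\Sigma_{\mathcal E}(\widetilde\sigma)\circ\pi=\Pi\circ\widetilde\sigma$ gives the pointwise intertwining
\[
\alpha_{\pi(x,v),\,t}\circ \Pi|_{\widetilde{\mathcal E}_{(x,v)}}=\Pi|_{\widetilde{\mathcal E}_{\varphi_t(x,v)}}\circ \widetilde\alpha_{(x,v),\,t}
\qquad\text{whenever }\varphi_t(x,v)\in\mathring M.
\]
Indeed, both sides applied to $\widetilde\sigma(x,v)$ for a billiard section $\widetilde\sigma$ equal $\Sigma_{\mathcal E}(\widetilde\sigma)$ evaluated via $\exp(-t\mathbb X)$ resp.\ $\widetilde\alpha$; using $\widetilde{\mathbb X}=\Sigma_{\mathcal E}^{-1}\mathbb X\Sigma_{\mathcal E}$ and uniqueness of solutions to the transport ODE one matches them, and since billiard sections span each fiber this fixes $\alpha$ on $\Pi(\widetilde{\mathcal E})=\mathcal E$. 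For a point with $\varphi_t(x,v)\in\partial M\setminus\partial_\mathrm gM$ one pushes a bit past the boundary, applies the identity where it holds, and uses that $\Pi$ already absorbs $\kappa$ on boundary fibers, so the identity persists by continuity of $\alpha$.

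To conclude the trace statement, fix a closed trajectory $\gamma$ of $\varphi$ with period $T$ and a base point $(x,v)=\gamma(t_0)$; by Proposition~\ref{prop:trajectory_bijection} its image $p=\pi(x,v)$ is a closed trajectory of $\phi$ of the same period. Choose $t_0$ with $\gamma(t_0)\in\mathring M$ (possible since a closed billiard trajectory meets $\mathring M$), so that $\Pi|_{\widetilde{\mathcal E}_{(x,v)}}:\widetilde{\mathcal E}_{(x,v)}\to\mathcal E_p$ is a linear isomorphism. The intertwining identity above, applied at $t=T$ with $\varphi_T(x,v)=(x,v)\in\mathring M$, reads $\alpha_{p,T}=\Pi|_{\widetilde{\mathcal E}_{(x,v)}}\circ\widetilde\alpha_{(x,v),T}\circ(\Pi|_{\widetilde{\mathcal E}_{(x,v)}})^{-1}$, i.e.\ $\alpha_{p,T}$ and $\widetilde\alpha_{(x,v),T}$ are conjugate endomorphisms of isomorphic spaces, hence $\mathrm{tr}(\alpha_{p,T})=\mathrm{tr}(\widetilde\alpha_{(x,v),T})$; base-point independence of both traces (Property~(2) of Lemma~\ref{lem:billparalltransp} on the billiard side, the flow property of $\alpha$ on the model side) finishes the proof.

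I expect the main obstacle to be the careful verification that the flow-chart trivializations of $\mathcal E$ near $\mathcal G$ are smoothly compatible with the trivialization transported from $\mathring M$ — this is the bundle analogue of the compatibility check in Corollary~\ref{cor:main} and hinges on reconciling $\widetilde\alpha$ for $t<0$ (which equals geodesic parallel transport of the extended operator) with $\widetilde\alpha$ for $t>0$ (which involves $\kappa$), exactly as \eqref{eq_contact3} reconciled $\alpha|_{\partial M}$ with $R^*\alpha|_{\partial M}$ via Lemma~\ref{lem:alphainv}; here the role of Lemma~\ref{lem:alphainv} is played by the defining relation $\pi_{\widetilde{\mathcal E}}\circ\kappa=R\circ\pi_{\widetilde{\mathcal E}}$ combined with \eqref{eq:flowpropalpha}. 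Once this is in place, everything else is a transcription of the scalar arguments already carried out in Sections~\ref{sec:model}–\ref{construction}.
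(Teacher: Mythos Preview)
Your proposal is correct and follows essentially the same route as the paper: the quotient $\mathcal E=\widetilde{\mathcal E}/\!\sim_\kappa$ (which the paper writes equivalently as $\widetilde{\mathcal E}_{(x,v)}\oplus\widetilde{\mathcal E}_{(x,v')}/\sim_\kappa$ over $\mathcal G$), flow-trivializations near $\mathcal G$ built from a trivialization of $\widetilde{\mathcal E}|_{\partial_\mathrm{in}M}$ transported by $\widetilde\alpha$, and $\Sigma_{\mathcal E}$ defined by descent exactly as you indicate. Your treatment of the trace equality via the explicit intertwining $\alpha_{\pi(x,v),t}\circ\Pi=\Pi\circ\widetilde\alpha_{(x,v),t}$ and conjugacy at an interior base point is in fact more detailed than the paper's one-line justification (``$\kappa$ acts trivially on $\mathcal E$''), but rests on the same mechanism.
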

\begin{proof}
Our proof is constructive and uses the abstract vector bundle construction lemma well established in the differential geometry literature, see e.g. \cite[Lemma~10.6]{Lee.2012}. We start by specifying the total space of our new bundle:
\begin{equation}
\mathcal{E} \defgr \bigsqcup_{p\in \mathcal{M}\setminus\mathcal{G}} \widetilde{\mathcal{E}}_{\pi^{-1}(p)} \sqcup \bigsqcup_{(x, v)\in \partial_\mathrm{in} M} \widetilde{\mathcal{E}}_{(x, v)}\oplus \widetilde{\mathcal{E}}_{(x, v')} \slash \sim_\kappa ,
\end{equation}
where $\sim_\kappa$ means that we quotient out the linear subspace of  $\widetilde{\mathcal{E}}_{(x, v)}\oplus \widetilde{\mathcal{E}}_{(x, v')}$  defined by the elements of the form $(e,-\kappa(e))$, which is possible by the relation $\pi_{\widetilde{\mathcal E}}\circ \kappa = R\circ \pi_{\widetilde{\mathcal E}}\big|_{\pi_{\widetilde{\mathcal E}}^{-1}(\partial_\mathrm{in} M)}$. Note that over each point $p=[x,v]\in \mathcal G$ the fiber $\mathcal E_p=\widetilde{\mathcal{E}}_{(x, v)}\oplus \widetilde{\mathcal{E}}_{(x, v')} \slash \sim_\kappa$ is canonically isomorphic to $\widetilde{\mathcal{E}}_{(x, v)}$ as well as $\widetilde{\mathcal{E}}_{(x, v')}$ via the maps $e\mapsto [e,0]$ and $e\mapsto [0,e]$, respectively.  Next we need to specify the trivializations of $\mathcal{E}$: In a neighborhood of any point of $\mathcal{M}\setminus \mathcal{G}$ we make the obvious choice and take trivializations of $\widetilde{\mathcal{E}}$ composed with (a suitable restriction of) $\pi\times \mathrm{id}_{\mathbb{C}^r}$. Around a point $p = [x, v] \in\mathcal{G}$, $(x,v)\in  \partial_\mathrm{in} M$, we define \emph{flow-trivializations} using the billiard parallel transport map: Take any trivialization $t_\mathrm{in}$ of $\widetilde{\mathcal{E}}\big|_{\partial_\mathrm{in} M}$ on an open set $U_\mathrm{in}\subset \partial_\mathrm{in} M$ around $(x, v)$ and put
\begin{equation*}
\begin{split}
\mathbf{t}: \pi^{-1}_\mathcal{E}\left( \Phi(N_\mathrm{in}\cap\mathbb{R}\times U_\mathrm{in} ) \right) &\longrightarrow \Phi(N_\mathrm{in}\cap\mathbb{R}\times U_\mathrm{in}) \times \mathbb{C}^r, \\
\mathcal{E}_{\Phi(t, [y, w])} \ni e &\longmapsto \left( (\phi_t\circ \pi)\times \mathrm{id}_{\mathbb{C}^r} \right) \circ t_\mathrm{in} \left(\widetilde{\alpha}_{\varphi_t(y, w), -t}(e) \right),
\end{split}
\end{equation*}
where $\Phi: N_\mathrm{in} \rightarrow \mathcal{M}$ denotes the flow chart from Section \ref{sec:smoothstructure}, $(y, w)$ is the unique lift of $[y,w]$ to $\partial_\mathrm{in} M$, and if $t=0$ we identified $[e,0]$ and $e$. This defines a trivialization $\mathbf{t}$ around $p$. Now, $\mathbf{t}$ transitions smoothly with any trivialization around points in  $\mathcal{M}\setminus \mathcal{G}$ thanks to Lemma \ref{lem:billparalltransp} (1), and furthermore $\mathbf{t}$ transitions smoothly with any trivialization $\mathbf{t}'$ built analogously but from another trivialization $t'_\mathrm{in}$ since $t'_\mathrm{in}\circ t_\mathrm{in}^{-1}$ is smooth and the smooth structure of $\mathcal M$ near $\mathcal G$ has been defined using flow charts. 

The desired surjection $\Pi:\widetilde{\mathcal{E}}\to\mathcal{E}$ is given by
\[
\Pi(e) \defgr \begin{cases}e,\qquad & e\in \bigsqcup_{p\in \mathcal{M}\setminus\mathcal{G}} \widetilde{\mathcal{E}}_{\pi^{-1}(p)},\\
[e,0], &e\in \bigsqcup_{(x, v)\in \partial_\mathrm{in} M} \widetilde{\mathcal{E}}_{(x, v)},\\
[0,e], &e\in \bigsqcup_{(x, v)\in \partial_\mathrm{out} M} \widetilde{\mathcal{E}}_{(x, v)}.
\end{cases}
\]
This map is continuous by definition of the topology on $\mathcal{E}$ and it is clearly smooth on $\bigsqcup_{p\in\mathcal{M}\setminus\mathcal{G}} \widetilde{\mathcal{E}}_{\pi^{-1}(p)}$. To check that $\Pi$ is also smooth near $\bigsqcup_{(x, v)\in \partial_\mathrm{in} M} \widetilde{\mathcal{E}}_{(x, v)}$, we compose it with a trivializaton $\mathbf{t}$ as above.  For $e\in \bigsqcup_{(x, v)\in \partial_\mathrm{in} M} \widetilde{\mathcal{E}}_{(x, v)}$ the point $e$ has been identified with $[e,0]$ in the definition of $\mathbf{t}$, so that by definition of the smooth structure on $\mathcal E$ the composition $\mathbf t\circ \Pi$ is smooth near $e$. On the other hand, for $e\in \bigsqcup_{(x, v)\in \partial_\mathrm{out} M} \widetilde{\mathcal{E}}_{(x, v)}$ the point $[0,e]=[\kappa^{-1}(e),0]$ appearing in $\mathbf{t}$ at $t=0$ is identified with $\kappa^{-1}(e)$. Using the relation $\pi_{\widetilde{\mathcal E}}\circ \kappa = R\circ \pi_{\widetilde{\mathcal E}}\big|_{\pi_{\widetilde{\mathcal E}}^{-1}(\partial_\mathrm{in} M)}$ and the definition of $\widetilde{\alpha}$ from the proof of Lemma \ref{lem:billparalltransp}, we obtain
\[
\widetilde{\alpha}_{\varphi_t(y, w), -t}\big(\kappa^{-1}(e)\big)=\widetilde{\alpha}_{\varphi_t(y, w), -t}(e)
\]
and hence $\mathbf t\circ \Pi$ is again smooth near $e$.

Next we define for a smooth billiard section $\widetilde{\sigma}$ of $\widetilde{\mathcal E}$ the section $\Sigma_\mathcal{E}(\widetilde{\sigma}) \defgr \sigma$ as 
\begin{equation*}
\sigma([x, v]) \defgr
\begin{cases}
\widetilde{\sigma}(x, v), &[x, v]\notin\mathcal{G},\\
[\widetilde{\sigma}(x, v),0]_\kappa, &[x, v]\in \mathcal{G},\; (x, v)\in \partial_\mathrm{in} M
\end{cases}
\end{equation*}
where $[\cdot,\cdot]_\kappa$ denotes the equivalence class with respect to $\sim_\kappa$. The map $\sigma$ is well-defined (i.e., independently of the choice of the lift $(x,v)\in\partial_\mathrm{in} M$ of $[x,v]$) because $\widetilde{\sigma}$ being a billiard section implies that $\widetilde{\sigma}(x,v')=\kappa (\widetilde{\sigma}(x,v))$ for  $(x, v)\in \partial_\mathrm{in} M$, and hence
$[\widetilde{\sigma}(x, v),0]_\kappa=[0,\widetilde{\sigma}(x, v')]_\kappa$. 
Clearly, $\sigma$ is a section and smooth at any point of $\mathcal{M}\setminus \mathcal{G}$. To test smoothness at any point of the gluing region $\mathcal{G}$ we have to compose with flow charts and flow trivializations to obtain a coordinate expression. Doing so yields
\begin{equation*}
\begin{split}
\mathbb{R}\times U_\mathrm{in} \cap N_\mathrm{in} &\longrightarrow \left( \mathbb{R}\times U_\mathrm{in} \cap N_\mathrm{in} \right) \times \mathbb{C}^r\\
(t, x, v) &\longmapsto t_\mathrm{in}\left( \widetilde{\alpha}_{\varphi_t(x, v), -t}\left( \widetilde{\sigma}(\varphi_t(x, v)) \right) \right) .
\end{split}
\end{equation*}
To prove the mapping properties claimed in \eqref{eq:def_billiard_sections} it therefore only remains to show that $\Sigma_\mathcal{E}$ is bijective on the given domains, but this follows easily from the observation that
\begin{equation*}
\Sigma_\mathcal{E}^{-1}(\sigma)(x, v) =
\begin{cases}
\sigma([x, v]), &(x, v)\in\mathring{M},\\
e, &(x, v)\in\partial_\mathrm{in} M,\; \sigma([x, v])=[e,0]_\kappa,\\
e', &(x, v)\in\partial_\mathrm{out} M,\; \sigma([x, v])=[0,e']_\kappa
\end{cases}
\end{equation*}
is indeed the inverse of $\Sigma_\mathcal{E}$ and has its image in $\mathrm{C}^\infty_\mathrm{Bill}(M\setminus\partial_\mathrm{g} M, \widetilde{\mathcal E})$ due to a similar coordinate calculation.

The definition of $\alpha$ is independent of the chosen section $\sigma_e$ by virtue of the Leibniz rule which trivially follows for $\mathbb{X}$ from the Leibniz rule for $\widetilde{\mathbb{X}}$. The claimed equality of traces follows immediately because the model bundle $\mathcal{E}$ was constructed in such a way that the boundary map $\kappa$ present in the definition of $\widetilde{\alpha}$ acts trivially on it.
\end{proof}

\begin{remark}
We refrain from stating and proving a vector-valued uniqueness result analogous to \Cref{prop:uniqueness}, as well as a vector-valued version of the resolvent study as in Corollary \ref{cor:Pmeromorphic} for the sake of brevity.
\end{remark}



\bibliographystyle{amsalpha}
\bibliography{bibo}

\bigskip


\end{document}